\setlist[enumerate]{label=\textnormal{(\roman*)}}
\newtheorem{theorem}{Theorem}[section]
\newtheorem{corollary}[theorem]{Corollary}
\newtheorem{lemma}[theorem]{Lemma}
\newtheorem{proposition}[theorem]{Proposition}
\theoremstyle{definition}
\newtheorem{definition}[theorem]{Definition}
\newtheorem{remark}[theorem]{Remark}
\newtheorem{conjecture}[theorem]{Conjecture}
\newtheorem{example}[theorem]{Example}
\numberwithin{equation}{section}
\newcommand\Pl{\mathbb{P}^{1}}
\newcommand*{\dif}{\mathop{}\!\mathrm{d}}
\newcommand{\MIC}{\mathrm{MIC}}
\newcommand{\ie}{\textit{i}.\textit{e}.}
\newcommand{\para}{\textnormal{par}}
\newcommand{\Fil}{\textnormal{Fil}}
\begin{document}	
\title{Constructing Parabolic Non-Abelian Hodge Correspondence in Positive Characteristic Using Parabolic Bases} 
\author{Xiaojin Lin}	
\email{xjlin@mail.ustc.edu.cn}

\begin{abstract}
    We introduce the concept of parabolic bases to establish a localized framework for parabolic bundles and parabolic $\lambda$-connections. 
    Building on this foundation, we propose a novel method for constructing the parabolic non-abelian Hodge correspondence in positive characteristic, extending the work originally developed by Krishnamoorthy and Sheng for algebraic curves. 
    Additionally, we investigate the rank $2$ parabolic Higgs-de Rham flow operator and present a modified version of the Sun-Yang-Zuo algorithm, specifically adapted to the parabolic setting.

    \medskip
    \noindent{\bf Keywords}: parabolic $\lambda$-connection, non-abelian Hodge correspondence, Higgs-de Rham flow, oper  

    \medskip
    \noindent{\bf 2020 Mathematics Subject Classification: } 14A21, 14G17 (primary), 14H30, 14J60, 14H60.
\end{abstract}

\maketitle
\tableofcontents
\section{Introduction}

    \subsection{Motivation and Background}
    In their groundbreaking research \cite{MeSe80Moduofvbwithparstr}, Mehta and Seshadri introduced the innovative concept of parabolic bundles.
    They showed that the category of irreducible unitary representations of the fundamental group of a punctured compact Riemann surface $(C,D)$ is equivalent to the category of stable parabolic bundles with parabolic structures concentrated on $D$ and a zero parabolic degree on $C$.
    This notable progress broadens the scope of the Narasimhan-Seshadri correspondence, which was originally formulated for compact Riemann surfaces.
    
    Recent studies on the geometry of logarithmic algebraic varieties have highlighted the crucial significance of parabolic bundles, especially within non-abelian Hodge theory on non-compact varieties. 
    Simpson’s work is of notable significance: Classical non-abelian Hodge theory established connections among semisimple flat bundles, polystable Higgs bundles with vanishing Chern classes, and local systems.
    In \cite{Sim90Harbdonnoncomcur}, Simpson extended the non-abelian Hodge correspondence to non-compact curves by involving filtered Higgs bundles, filtered de Rham bundles, and filtered local systems.
    The filtered Higgs bundle and the filtered flat connection correspond to their parabolic equivalents, known as the parabolic Higgs bundle and the parabolic flat connection, respectively.
    
    It is natural to seek an analogous extension for the non-abelian Hodge correspondence to parabolic setting in characteristic $p$, building upon the remarkable work of Ogus-Vologodsky \cite{OV2007NonAbeHincharp} and Schepler \cite{Sche08LognonabeHodincharp} for the logarithmic case, which constitutes the principal subject of this paper.
    An essential component of the Ogus-Vologodsky theory is the development of the inverse Cartier transform functor along with its quasi-inverse. This paper is chiefly dedicated to the construction of the parabolic Cartier transform functor and its quasi-inverse, as initially proposed by \cite{KrisSheng20perideRhamovercur}.
    
    In fact, the investigation of the parabolic inverse Cartier transformation is driven by the quest to understand the Higgs-de Rham flow, transcending the mere generalization of the original inverse Cartier transformation.
  The concept of Higgs-de Rham flow is fundamentally linked to the earlier introduction of periodic Higgs bundles by Mao Sheng and Kang Zuo, as detailed in \cite{ShengZuo12periodHiggs}. 
    Subsequently, in \cite{LSZ19SsHiggsbdPerbdrepoffdgp}, a systematic development of the Higgs-de Rham flow theory was achieved, based on the foundational work of the aforementioned Ogus-Vologodsky theory. 
    This theory can be seen as a positive characteristic (and $p$-adic) counterpart to the Yang-Mills flow, which is pivotal in various research domains, including the study of the positive Bogomolov inequality \cite{Lan15Bogoineincharp}, $p$-adic uniformization theory \cite{LSYZ19uniofpadiccurveviaflow}, the theory of rigid connections \cite{EsGro20RigidconandFiso}, and other related areas.
    
    The periodic Higgs-de Rham flows are intricately linked to Fontaine-Faltings modules with a certain endomorphism structure. 
    Fontaine-Laffaille modules constitute the $p$-adic analogue to the polarized variations of Hodge structures, thereby serving as a pivotal tool in the study of $p$-adic Hodge theory within the realms of algebraic geometry and number theory. 
    Furthermore, this advancement provides novel insights into the arithmetic exploration of the Gauss-Manin connection, a field initially investigated by Zuo \cite{Zuo18arithSim} and Li and Sheng \cite{LiSheng22CharofBeanumber}, among others.
    The foundational principle underlying the study of the Gauss-Manin system via the Higgs-de Rham flow is based on the result that the Gauss-Manin system inherently induces a periodic (parabolic) Higgs-de Rham flow (see Krishnamoorthy and Sheng \cite{KrisSheng20perideRhamovercur}).
    A series of studies focusing on specific cases has been further investigated, particularly those involving rank $2$ flows on projective curves.
    For example, Li and Sheng's research on genus zero modular curves via the Higgs-de Rham flow \cite{LiSheng22CharofBeanumber}, and Yang and Zuo's study \cite{YangZuo23Constrfamiavof} on motivic rank $2$ local systems on a projective line minus four points based on \cite{SYZ22Projecrysrepoffmtgpandtwistedflow}.
    In these specific instances, the arithmetic of the Gauss-Manin system is derived from its periodic characteristics.
    
    This exploration introduces the concept of the parabolic Higgs-de Rham flow.
    One of the reasons to expand the original theory is as follows: The monodromy of the Gau\ss-Manin system at $\infty$ is typically quasi-unipotent but not unipotent. 
    Therefore, the residues of the connection at infinity may not be nilpotent, which poses an obstacle to constructing the associated periodic Higgs-de Rham flow.
    To tackle this problem, it is crucial to extend the connection to a parabolic one on compactification and to develop flow theory to accommodate parabolic contexts.
    
    In \cite{SYZ22Projecrysrepoffmtgpandtwistedflow} Sun,Yang and Zuo studied the twisted Higgs-de Rham flow, which can be interpreted as a special type of parabolic Higgs-de Rham flow.
    Subsequently, Krishnamoorthy and Sheng \cite{KrisSheng20perideRhamovercur} systematically developed a theory of positive characteristic parabolic Higgs-de Rham flows over curves. 
    The pivotal step in their work involves the construction of the parabolic inverse Cartier transformation. 
    Their construction exploits the so-called \emph{Biswas-Iyer-Simpson} (BIS for brevity) correspondence, which establishes a relationship between parabolic $\lambda$-connections and orbifold $\lambda$-connections.
    The parabolic inverse Cartier on a log variety is delineated as the descent of the inverse Cartier on a specific finite cover via the BIS correspondence in essence. 
    Moreover, it can be shown that the parabolic inverse Cartier, defined in this manner, is intrinsic.
    
    \subsection{Main Results}
This research introduces a straightforward technique for developing the parabolic inverse Cartier transformation applicable to any dimension. 
    The functor introduced in this work will be demonstrated to align with the dimension one scenario previously described in \cite{KrisSheng20perideRhamovercur} (refer to Corollary \ref{cor of two par inverse cartier coincide}).
  Our approach employs the creation of a localized formal bases concept, referred to as the parabolic bases, to describe the parabolic structure of the parabolic bundle.
    This allows us to directly use the exponential twist technique in formulating the inverse Cartier transformation.
    
    Specifically, let $(X,D)$ be a smooth log variety, we present a new category, represented by $\mathcal{PVB}/(X,D)$, which is defined as the category of vector bundles equipped with parabolic bases.
    Within this category, the objects are referred to as vector bundles equipped with parabolic bases (as detailed in Definition \ref{def of global vb with pb}) over $(X,D)$.
    The parabolic bases over an open subscheme (sufficiently small) $U$ of $X$ are formally characterized as bases of the underlying bundle, twisted by a \textit{fractional exponent} of the defining sections of the divisor $U \cap D$. 
    Subsequently, we describe the transition data of the parabolic bases across overlapping subsets, as the open subschemes vary, to construct a global object.

    The initial finding (refer to Section \ref{the section: establising equivalence}) demonstrates that the category $\mathcal{PVB}$ is equivalent to the category of parabolic bundles. 
    The explicit natural isomorphism is constructed within the proof.            
    The advantage of using parabolic bases is its capability to align the definition more closely with the original conception of a vector bundle. 
    Additionally, we will demonstrate that this approach also streamlines the handling of these bundles. 
    For example, in the category $\mathcal{PVB}$, one can define tensor (refer to Definition \ref{def of tensor of par}) and $\mathcal{H}om$ (see Definition \ref{def of hom of par b}) functors in an intuitive manner.
    This is consistent with the original definition of a parabolic bundle under equivalence, as stated in Proposition \ref{prop of equi preserves Hom and tensor}.

    In conclusion, we arrive at the following outcome:
    \begin{theorem}[=Theorem \ref{thm of equivalence between two category}]\label{intro: thm of equivalence between two category}
    The category of vector bundles with parabolic bases over $(X,D)$ is tensor equivalent to the category of parabolic bundles over $(X,D)$.
    \end{theorem}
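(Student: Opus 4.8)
The plan is to exhibit an explicit pair of quasi-inverse functors between $\mathcal{PVB}/(X,D)$ and the category of parabolic bundles, and then to promote this equivalence to a tensor equivalence by comparing the two tensor structures.

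First I would construct a functor $F$ from $\mathcal{PVB}/(X,D)$ to parabolic bundles. On a sufficiently small open $U \subseteq X$ on which $U \cap D$ is cut out by a single equation $t$, a parabolic basis is by definition a frame of the shape $\{t^{\alpha_i} e_i\}$, where $\{e_i\}$ is an ordinary frame of the underlying bundle and the $\alpha_i$ are the prescribed weights. To this local datum I attach the filtered family $E_\bullet$ by setting $E_\alpha|_U$ to be the $\mathcal{O}_U$-submodule generated by those $t^{\alpha_i} e_i$ with $\alpha_i \ge \alpha$, extended to all real $\alpha$ by the normalization $E_{\alpha+1} = E_\alpha(-D)$. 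The verification to carry out here is that these local filtrations neither depend on the chosen parabolic basis nor fail to glue on overlaps; this is precisely where the transition data of the parabolic bases — compatible with the fractional twists — is used, and it shows that $F$ takes values in parabolic bundles and is functorial on morphisms.

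Conversely, to build $G$ I would invoke the local structure of parabolic bundles: since the weights jump at only finitely many values in $[0,1)$, after shrinking $U$ one can split the associated graded and produce an adapted frame realizing each weight, which is exactly a parabolic basis in the sense of Definition \ref{def of global vb with pb}. The delicate point is to check that the change-of-frame matrices between two adapted frames on an overlap lie in the admissible class prescribed by the definition of $\mathcal{PVB}$, so that the local parabolic bases descend to a genuine global object; this gluing argument is the technical heart of the construction and the step where I expect the main difficulty. With both functors available, the natural isomorphisms $F \circ G \cong \id$ and $G \circ F \cong \id$ are checked on underlying bundles and filtrations over each $U$, and the local identifications patch because they are canonical; the induced bijection on morphisms gives full faithfulness.

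Finally, to obtain a \emph{tensor} equivalence I would compare the tensor product on $\mathcal{PVB}$ from Definition \ref{def of tensor of par} with the standard tensor product of parabolic bundles. Because the tensor of two parabolic bases is formed by twisting the tensor of the underlying frames by the sum of the fractional exponents, $F$ sends it to the parabolic tensor product weight-by-weight, and the associativity and unit constraints match formally; compatibility with $\mathcal{H}om$ and tensor is then exactly the content of Proposition \ref{prop of equi preserves Hom and tensor}. Combining this with the equivalence established above yields the asserted tensor equivalence.
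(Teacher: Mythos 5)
Your overall architecture coincides with the paper's own proof: the paper also establishes the theorem by exhibiting two explicit, locally defined quasi-inverse functors (its $\alpha$ is your $G$, its $\beta$ is your $F$), verifying that transition matrices respect the weights (block upper-triangularity along $D$ of the change-of-frame matrices, equivalently divisibility of the below-diagonal blocks by the defining sections), and then reducing the tensor statement to Proposition \ref{prop of equi preserves Hom and tensor}. So nothing in your outline is misdirected, and your last two steps are exactly the paper's.

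The genuine problem is your definition of the filtration underlying $F$, which fails as stated. First, you invert the convention: by Definition \ref{def of global vb with pb} a parabolic basis consists of the formal sections $e_i/s^{\alpha_i}$, not $s^{\alpha_i}e_i$; in either convention the elements $t^{\alpha_i}e_i$ involve fractional powers of $t$, hence are not sections of $E$, so ``the $\mathcal{O}_U$-submodule of $E$ generated by those $t^{\alpha_i}e_i$ with $\alpha_i\ge\alpha$'' is not defined. Second, and more seriously, even after replacing these by the honest sections $e_i$, the span of $\{e_i:\alpha_i\ge\alpha\}$ has rank $\#\{i:\alpha_i\ge\alpha\}$, which drops as $\alpha$ grows; for $\alpha\in[0,1)$ larger than every weight you would get $E_\alpha=0$, contradicting the chain $E_0(-D)=E_1\subseteq E_\alpha\subseteq E_0$ and the requirement of Definition \ref{def of par bundle} that all pieces of a parabolic sheaf be torsion-free of the same rank. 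The correct local recipe keeps all $r$ generators and twists those whose weight has been passed, namely $E_\alpha|_U=\bigoplus_{\alpha_i\ge\alpha}\mathcal{O}_U\,e_i\;\oplus\;\bigoplus_{\alpha_i<\alpha}\mathcal{O}_U\,(t\,e_i)$ for $\alpha\in[0,1)$, extended by the normalization $E_{\alpha+1}=E_\alpha(-D)$; with this fix, your gluing argument goes through exactly as in the paper. One smaller point: in your construction of $G$, the local splitting into parabolic line bundles is not a consequence of the weights jumping at only finitely many values --- for general parabolic sheaves no such splitting exists; it is precisely the locally abelian condition, which is part of the definition of a parabolic bundle in this paper (Definition \ref{def of locally abelian par bd}), so it should be invoked as a hypothesis rather than derived.
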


    We then extend the category $\mathcal{PVB}$ to a new category, denoted as $\lambda$-$\mathcal{PC}on$, which encompasses $\lambda$-connections equipped with parabolic bases.
    We prove that this newly-defined category is equivalent to the category of parabolic $\lambda$-connections. 
    Within $\lambda$-$\mathcal{PC}on$, we introduce the concept of adjusted objects, which correspond to the adjusted parabolic $\lambda$-connection under the equivalence. 
    This concept, initially presented by Iyer and Simpson in \cite{IyerSim07Arelationbetweenparchern}, is crucial in Hodge theory.
    Hence, Theorem \ref{intro: thm of equivalence between two category} is enhanced as follows.
    \begin{theorem}[=Theorem \ref{thm of equiv between adjusted par and nil conn with par bas}]
        The category of parabolic $\lambda$-connections with parabolic bases over $(X,D)$ is tensor equivalent to the category of $\lambda$-connections over $(X,D)$.
      Moreover, the adjusted $\lambda$-connections correspond to the adjusted $\lambda$-connections with parabolic bases under this equivalence.
    \end{theorem}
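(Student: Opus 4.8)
The plan is to bootstrap from the underlying equivalence of Theorem~\ref{thm of equivalence between two category}: since a $\lambda$-connection is extra structure layered on top of a bundle, I would realize the desired equivalence as that theorem's equivalence carrying along the connection operator. Let $\Phi\colon \mathcal{PVB}/(X,D)\to\{\text{parabolic bundles}\}$ denote the tensor equivalence of Theorem~\ref{thm of equivalence between two category}, together with its explicit natural isomorphism $\eta$ and a chosen quasi-inverse $\Psi$. The first step is to observe that both a parabolic $\lambda$-connection and a $\lambda$-connection with parabolic bases consist of an object of the respective underlying category equipped with an operator $\nabla\colon E\to E\otimes\Omega^1_X(\log D)$ obeying the $\lambda$-twisted Leibniz rule. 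Thus the candidate functor sends $(V,\{\text{parabolic bases}\},\nabla)$ to $(\Phi(V,\{\text{parabolic bases}\}),\Phi_\ast\nabla)$, where $\Phi_\ast\nabla := (\eta\otimes\id)\circ\nabla\circ\eta^{-1}$ is the transport of $\nabla$ across the natural isomorphism; the quasi-inverse is defined symmetrically from $\Psi$. Functoriality on morphisms is immediate, since morphisms are required to commute with $\nabla$ on both sides and $\eta$ is natural.

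The core of the argument is to check that the transported operator lands among parabolic $\lambda$-connections, \ie\ that $\Phi_\ast\nabla$ satisfies parabolic Griffiths transversality $\nabla(E_\alpha)\subseteq E_\alpha\otimes\Omega^1_X(\log D)$ together with the prescribed residue behaviour on the associated graded. Because the parabolic filtration is, under $\Phi$, exactly the image of the filtration encoded by the fractional-exponent twists of the parabolic bases, I would verify transversality locally on an affine chart $U$ on which the parabolic bases are given by twisting a frame by $\prod_i t_i^{-\alpha_i}$, where the $t_i$ are defining sections of the components of $U\cap D$. On such a chart the $\lambda$-connection applied to a twisted basis element produces, by the exponential-twist computation, a residue contribution $-\lambda\,\alpha_i\,\frac{\dif t_i}{t_i}$, which shows that $\Phi_\ast\nabla$ acquires precisely the residue eigenvalue $-\lambda\alpha_i$ dictated by the weight $\alpha_i$. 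The transition data between charts is already handled by Theorem~\ref{thm of equivalence between two category}, so after checking that $\nabla$ is compatible with the transition isomorphisms of the parabolic bases, transversality and the residue condition glue to a global statement.

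Tensor compatibility then follows by combining two facts: $\Phi$ is already a tensor equivalence (Theorem~\ref{thm of equivalence between two category}, via Definition~\ref{def of tensor of par} and Proposition~\ref{prop of equi preserves Hom and tensor}), and the $\lambda$-connection on a tensor product is determined on both sides by the $\lambda$-Leibniz rule $\nabla_{E\otimes F}=\nabla_E\otimes\id+\id\otimes\nabla_F$. Since transport across $\eta$ respects this formula, the natural isomorphism $\Phi(E\otimes F)\cong\Phi(E)\otimes\Phi(F)$ automatically intertwines the two connections, so the functor upgrades to a tensor equivalence.

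Finally, for the statement on adjusted objects I would isolate the numerical matching that defines adjustedness in the sense of Iyer--Simpson: a parabolic $\lambda$-connection is adjusted when the eigenvalues of $\res\nabla$ along each component of $D$ coincide with $-\lambda$ times the parabolic weights. The exponential-twist computation above shows exactly that $\Phi_\ast\nabla$ has residue eigenvalue $-\lambda\alpha_i$ on the graded piece of weight $\alpha_i$, so adjustedness of the source is equivalent to adjustedness of the target, and the equivalence restricts to the two subcategories of adjusted objects. I expect the main obstacle to be precisely this residue bookkeeping across overlaps: ensuring that the fractional-exponent twists and their transition data are compatible with the log-pole part of $\nabla$, so that the residue-eigenvalue/weight correspondence is well defined globally and independent of the chosen parabolic bases, rather than only chart-by-chart.
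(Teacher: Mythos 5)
Your strategy for the main equivalence is essentially the paper's: transport the connection operator across the bundle-level equivalence of Theorem~\ref{thm of equivalence between two category}, check locally that preservation of the parabolic filtration on the $\mathrm{Par}\mathcal{VB}$ side corresponds to the admissibility condition on the $\mathcal{PVB}$ side (the paper does this via the diagrams of Example~\ref{morphism between parabolic line bundle}), and reduce the tensor statement to a local Leibniz-rule computation. Up to that point the proposal matches the paper's proof in outline.

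The gap is in the ``adjusted'' half, which is precisely the content that goes beyond Theorem~\ref{thm of equivalence between two category}. First, you work with the wrong definitions: in this paper a parabolic $\lambda$-connection is adjusted when $\mathrm{Res}_{D_i}\nabla$ acts on $\mathrm{Gr}_{D_i,\alpha_i}V$ with eigenvalues $+\lambda\alpha_i$ (Definition~\ref{def of adjusted lam connection}), not $-\lambda\alpha_i$; and on the $\mathcal{PVB}$ side ``adjusted'' means the \emph{parabolic residue} --- the matrix of $\nabla$ in the twisted basis $e_i/s_i^{\alpha_i}$ restricted to $D$ --- is nilpotent (Definition~\ref{def of nilpotent residue par conn}). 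Your argument never engages with this second definition at all, so there is nothing on the $\mathcal{PVB}$ side for your claimed equivalence to compare against. Second, and more seriously, your key assertion that $\Phi_{*}\nabla$ ``has residue eigenvalue $-\lambda\alpha_i$ on the graded piece of weight $\alpha_i$'' is made unconditionally, and as such it is false: the twist $e_i\mapsto e_i/s_i^{\alpha_i}$ only \emph{shifts} the diagonal of the residue by $-\lambda\alpha_i$ (this is formula~\eqref{fml of base change}); the full residue is that shift plus the residue of the input connection matrix. If your claim held for every object, every object would be adjusted and the ``moreover'' clause would be vacuous. The correct statement is relative: nilpotency of the twisted residue is equivalent to the untwisted residue having eigenvalues $+\lambda\alpha_i$ on the graded pieces, because the twist cancels exactly the diagonal term $\lambda\,\mathrm{diag}\{\alpha_1,\ldots,\alpha_r\}$. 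Third, you wave at ``residue bookkeeping'' but omit the one computation that makes it work: conjugation by $\mathrm{diag}\{s^{-\alpha_1},\ldots,s^{-\alpha_r}\}$ multiplies the $(j,k)$ entry of $N$ by $s^{\alpha_k-\alpha_j}$, which has a pole whenever $\alpha_j>\alpha_k$, and it is precisely the admissibility condition ($\mathrm{Res}_D(m_{jk})=0$ for $\alpha_j>\alpha_k$, i.e.\ $N|_D$ block upper triangular) that makes the twisted residue regular along $D$ and nilpotent. That check is the substance of the paper's proof of the adjusted correspondence, and it is missing from yours.
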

    
    Furthermore, we present the definitions of pullback and pushforward for $\lambda$-connections with parabolic bases. 
    A pullback is designed for flat morphisms, while a pushforward is associated with finite surjective morphisms, generally referred to as branched coverings.
    Thus, the functoriality property  for the parabolic bases is confirmed.
    The primary outcome of the local theory of parabolic bundles is the following theorem.
    
    \begin{theorem}[=Theorem \ref{thm of Bis for vb with pb}]
        If $f: (X,D) \to (Y,B)$ is a finite branched Galois covering between smooth varieties over $k$, such that the ramification indices are all non-zero in $k$.
        Then we have the following equivalence between two categories:
        \begin{equation*}
        \begin{tikzcd}
        \left\{ \vcenter{\hbox{ $  G$-equivariant parabolic}\hbox{\qquad $\lambda$-connections}}  \right\} \Bigm/(X,D)  & \{\textit{parabolic }\lambda\textit{-connections}\}/(Y,B)
	\arrow["{(f_{\para,*})^{G}}"{pos=0.6}, shift left, harpoon, from=1-1, to=1-2]
	\arrow["{f^{*}_{\mathrm{par}}}", shift left, harpoon, from=1-2, to=1-1]
        \end{tikzcd}.
        \end{equation*}
        Moreover, the $G$-equivariant adjusted (strongly) parabolic $\lambda$-connections on $(X,D)$ correspond to the adjusted (strongly) parabolic $\lambda$-connections on $(Y,B)$ under the equivalence.
    \end{theorem}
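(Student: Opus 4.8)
The plan is to transport the entire problem into the language of parabolic bases, where the descent along a Galois cover becomes a transparent computation with Kummer coverings. By the equivalence of Theorem \ref{thm of equiv between adjusted par and nil conn with par bas}, it suffices to prove the corresponding statement for $\lambda$-connections \emph{with parabolic bases} on both sides; the tensor-compatibility and the adjusted/strongly-parabolic clauses then descend through that equivalence. Accordingly I would first reinterpret $f^{*}_{\para}$ and $(f_{\para,*})^{G}$ as operations on objects of $\lambda$-$\mathcal{PC}on$: the pullback sends a parabolic basis over $(Y,B)$, twisted by fractional exponents of the defining sections of $B$, to its pullback over $(X,D)$, where the Galois action of $G$ on $f_{*}\mathcal{O}_{X}$ induces a canonical $G$-equivariant structure; the pushforward is the ordinary pushforward of the underlying bundle-with-basis followed by taking $G$-invariants.

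The heart of the argument is a local model near the ramification. \'Etale-locally (or formally) along a component of $D$ mapping to a component of $B$ with ramification index $e$, the covering is Kummer, $t \mapsto t^{e} = s$, and the hypothesis that $e$ is invertible in $k$ guarantees that the inertia acts on $t$ through a faithful character $\zeta$ with $\zeta^{e}=1$. First I would check that under $s = t^{e}$ a fractional exponent $s^{\alpha}$ pulls back to $t^{e\alpha}$, so $f^{*}_{\para}$ multiplies the parabolic weights along that component by $e$; when the weights on $(Y,B)$ lie in $\tfrac{1}{e}\mathbb{Z}$ the pulled-back basis becomes genuinely integral, and the residual fractional data is recorded exactly by the eigencharacter of the $G$-action. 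Conversely, decomposing $f_{*}\mathcal{O}_{X}$ into its isotypic pieces under $G$ and taking invariants recovers the fractional exponents on $(Y,B)$. This local dictionary is the step I expect to be most delicate, because it must be carried out compatibly along every component and at the normal-crossing intersections of $D$, where several ramification indices interact simultaneously.

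With the local model in hand I would assemble the natural isomorphisms $(f_{\para,*})^{G}\circ f^{*}_{\para}\cong \id$ and $f^{*}_{\para}\circ(f_{\para,*})^{G}\cong \id$. The first follows from the projection formula together with the identification $(f_{*}\mathcal{O}_{X})^{G}\cong \mathcal{O}_{Y}$, applied compatibly with the transition data of the parabolic bases already constructed in the paper; the second uses that a $G$-equivariant object is determined by its invariants, precisely because $G$ acts freely away from the ramification and the isotypic decomposition is exact in characteristic coprime to the orders involved. Throughout I would verify that these isomorphisms respect the transition cocycles of the parabolic bases, so that the local equivalences glue to a global one, and that the construction is symmetric under tensor product, yielding the asserted tensor-equivalence.

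Finally I would track the $\lambda$-connection and the refined clauses. Since $f^{*}\tfrac{\dif s}{s} = e\,\tfrac{\dif t}{t}$, the pullback scales the residue of a $\lambda$-connection along the corresponding component by $e$, exactly matching the scaling of the parabolic weights; consequently the adjusted condition of Iyer--Simpson, which pins the residue eigenvalues to the weights, is preserved by $f^{*}_{\para}$ and hence by its quasi-inverse. The strongly parabolic condition, being the nilpotence of the residue on the associated graded, is intrinsic and is therefore transported verbatim. Checking that the Leibniz rule for the $\lambda$-connection commutes with both the $G$-action and the formation of invariants---so that $f^{*}_{\para}$ and $(f_{\para,*})^{G}$ are functors of $\lambda$-connections and not merely of bundles---completes the proof.
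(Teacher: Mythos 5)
Your proposal follows essentially the same route as the paper: both transport the problem into the parabolic-bases category, reduce to the local Kummer model $s=t^{e}$ with $e$ invertible in $k$, establish $(f_{\para,*})^{G}\circ f_{\para}^{*}\cong \mathrm{Id}$ via the projection formula together with $(f_{\para,*}\mathcal{O}_{X})^{G}\cong\mathcal{O}_{Y}$, and obtain the other composite by the $G$-isotypic decomposition (the paper phrases this as choosing the unique $k<n$ with $e x^{k}$ being $G$-invariant), with the refined clauses following from the functorial behavior of parabolic residues under pullback and pushforward. The only blemish is terminological: in the paper's conventions the \emph{strong} parabolic condition means the parabolic residue vanishes (the residue acts as $\lambda\alpha_{i}\,\mathrm{Id}$ on the graded pieces), whereas the nilpotence-on-the-graded condition you ascribed to it is the \emph{adjusted} condition—but this swap does not affect the validity of your argument.
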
 
    The theorem expands the \textit{Biswas-Iyer-Simpson correspondence} (abbreviated as BIS correspondence). 
    Particularly, when limited to the subcategory of $G$-equivariant $\lambda$-connections that have a trivial parabolic structure on the left, it coincides with the BIS correspondence.
    
    Next, we use the developed local theory of parabolic bundles to investigate the parabolic non-abelian Hodge correspondence in positive characteristic. 
    Let $\mathrm{HIG}_{lf,*}^{p-1}$ denote the category of parabolic Higgs bundles over the pair $(X,D)$, where the Higgs fields are nilpotent with a nilpotence exponent of at most $(p-1)$, and their residues are also nilpotent with a nilpotence degree not exceeding $(p-1)$, $\mathrm{MIC}_{lf,*}^{p-1}$ represents the category of flat bundles on $(X,D)$ where both their $p$-curvatures and the residues along each component of $D$ are nilpotent with a nilpotence exponent less than $(p-1)$.
    The positive characteristic parabolic non-abelian Hodge correspondence establishes an equivalence between $\mathrm{HIG}_{lf,*}^{p-1}$ and $\mathrm{MIC}_{lf,*}^{p-1}$, achieved through the construction of parabolic Cartier and parabolic inverse Cartier transformation functors.     
    The primary technique we employ is the exponential twist, as originally introduced in \cite{LSZ15NonabeHodexptwi}.
    
    In regard to the inverse Cartier transformation, following the approach of Lan, Sheng and Zuo \cite{LSZ15NonabeHodexptwi}, we apply the exponential twist to the Frobenius twist of the parabolic Higgs bundle, making use of the parabolic bases in addition to its associated Higgs field and transition data. 
    We show that the output data form a parabolic connection.
    The next steps maintain alignment with the initial case, which culminates in achieving a flat parabolic connection, thereby completing the inverse Cartier construction.

    To build the parabolic Cartier transformation, we must first define the parabolic Cartier descent functor, which serves as a quasi-inverse to the Frobenius pullback. 
    This result carries distinct importance, and we have adapted Katz's seminal proof \cite{Katz70Nilpotneconnections} to the parabolic bases in our methodology.
    Hence we get the Cartier descent theorem for the parabolic setting.
    \begin{theorem}[=Theorem \ref{thm of Cartier Descent}]
        The following two categories are equivalent 
        \begin{equation*}
        \begin{tikzcd}
        \{  \text{Parabolic vector bundles } V'_{*} \} /(X',D')  & \left\{ \vcenter{\hbox{Strong parabolic $\lambda$-connections $(V_{*},\nabla)$} \hbox{\qquad with vanishing $p$-curvature}}  \right\} \Bigm/(X,D)
	\arrow["{F^{*}}"{pos=0.6}, shift left, harpoon, from=1-1, to=1-2]
	\arrow["{( \, )^{\nabla}}", shift left, harpoon, from=1-2, to=1-1]
        \end{tikzcd}.
        \end{equation*}
        where $F^{*}$ denotes the Frobenius pullback, and $( \, )^{\nabla}$ is a functor that takes horizontal sections.
    \end{theorem}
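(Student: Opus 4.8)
The plan is to transport Katz's classical Cartier descent \cite{Katz70Nilpotneconnections} into the parabolic-bases formalism, where along $D$ a parabolic object is presented as an honest bundle (resp. connection) whose frame is twisted by fractional powers of the local defining sections. By Theorem \ref{intro: thm of equivalence between two category} and Theorem \ref{thm of equiv between adjusted par and nil conn with par bas} it suffices to prove the equivalence after replacing parabolic bundles and strong parabolic $\lambda$-connections by their counterparts in $\mathcal{PVB}/(X',D')$ and $\lambda$-$\mathcal{PC}on/(X,D)$. The point of this reduction is that on a sufficiently small $U$ with local coordinates and defining sections $t_1,\dots,t_r$ of $U\cap D$, a parabolic basis is a genuine frame $\{v_j\}$ of the shape $v_j = e_j\prod_i t_i^{-\alpha_{ij}}$, so that the connection, its residues, and its $p$-curvature can all be computed by Leibniz's rule directly on these formal symbols, exactly as in the non-parabolic local theory.

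First I would check that $F^{*}$ lands in the stated target category. On the underlying bundle the canonical Frobenius-descent connection is locally $d$ in the pulled-back frame $F^{*}e_j$, so its $p$-curvature vanishes by the usual computation $\psi(\partial)=\nabla_\partial^p-\nabla_{\partial^{[p]}}=0$ (on coordinate fields $\partial_i^{[p]}=0$ and $\nabla_{\partial_i}F^{*}e_j=0$), and this is unaffected by the fractional twist since $F^{*}$ carries parabolic bases to parabolic bases. The new weights along $D$ arise through $F^{*}t_i' = (\text{unit})\cdot t_i^{p}$: a frame vector twisted by $(t_i')^{-\alpha}$ pulls back to one twisted by $t_i^{-p\alpha}$, and the residue of the canonical connection is read off from $d(t_i^{-p\alpha}) = -p\alpha\, t_i^{-p\alpha}\frac{\dif t_i}{t_i}$. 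Because $p=0$ in $k$ (here using that the weight denominators are prime to $p$), the residue scalar $p\alpha$ vanishes and the graded residue is zero, which is precisely the assertion that the output is \emph{strongly} parabolic. Tracking the integer versus fractional part of $p\alpha$, the former absorbed into an honest twist $t_i^{-\lfloor p\alpha\rfloor}$ and the latter becoming the reduced weight, is the bookkeeping that records this new structure as an object of $\lambda$-$\mathcal{PC}on$.

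Next I would analyze the horizontal-section functor $(\,)^{\nabla}$. Since flat sections are annihilated by $\nabla$ they form an $\mathcal{O}_{X'}$-module, and the essential input is Katz's local lemma: with étale coordinates for which $\partial_i^{[p]}=0$, the vanishing $p$-curvature makes the $\nabla_{\partial_i}$ pairwise commuting and $p$-nilpotent, whence the natural map $\mathcal{O}_X\otimes_{\mathcal{O}_{X'}}V^{\nabla}\to V$ is an isomorphism and $V^{\nabla}$ is locally free of the same rank over $X'$. In the parabolic-bases setting one runs this on the twisted frame: each horizontal vector $w_j = F^{*}e'_j\,t^{-p\alpha_j}=F^{*}\big(e'_j(t')^{-\alpha_j}\big)$ descends through $t^{p}=F^{*}t'$ to a frame vector twisted by $(t')^{-\alpha_j}$, so the weight exponent is divided by $p$, and the resulting $\{$descended $w_j\}$ furnishes a parabolic basis of $V^{\nabla}$ on $(X',D')$ whose transition data glue because taking kernels is compatible with the $\mathcal{O}_{X'}$-linear gluing of parabolic bases. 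Finally the unit and counit isomorphisms $F^{*}\big(V^{\nabla}\big)\cong V$ and $\big(F^{*}V'_{*}\big)^{\nabla}\cong V'_{*}$ are Katz's two natural transformations, and I would verify that they are morphisms of objects-with-parabolic-bases, i.e. that they respect the fractional twists and the transition maps.

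The main obstacle I expect is precisely the interaction of the fractional twist with the Frobenius and with the kernel operation: one must ensure that the horizontal sections of a frame twisted by $t_i^{-\beta}$ again lie in the correctly twisted lattice, and that the scaling of weights by $p$ together with its partial inverse on taking $\nabla$-invariants is well defined at the level of the gluing data, not merely on a single chart. This is the step where the characteristic-$p$ phenomenon, namely the vanishing of $p\alpha$ in $k$ and hence the automatic strong-parabolicity, must be reconciled with the $[0,1)$-normalization of the weights through the integer/fractional decomposition described above. Once the local Katz isomorphism is shown compatible with the transition maps of the parabolic bases, the global equivalence follows from the descent already packaged in Theorems \ref{intro: thm of equivalence between two category} and \ref{thm of equiv between adjusted par and nil conn with par bas}.
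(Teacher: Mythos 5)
Your overall strategy---pass to the parabolic-bases categories via Theorems \ref{thm of equivalence between two category} and \ref{thm of equiv between adjusted par and nil conn with par bas} and adapt Katz's Cartier descent---is the same as the paper's, and your treatment of the $F^{*}$ direction (weights multiplied by $p$, integer/fractional splitting, vanishing of the parabolic residue because $p\alpha=0$ in $k$) agrees with Proposition \ref{prop of Fro of par bd}. The genuine gap is in the $(\,)^{\nabla}$ direction, and it is exactly the step you yourself flag as ``the main obstacle'': you never construct the parabolic basis of $V^{\nabla}$ for a \emph{general} object of the right-hand category. Your descent argument runs on horizontal vectors already written in the form $w_j=F^{*}\big(e'_j(t')^{-\alpha_j}\big)$, i.e.\ it assumes from the outset that the parabolic frame of $V$ consists of horizontal sections of pullback shape---but that is precisely the essential surjectivity of $F^{*}$ being proven. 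For a general strong parabolic connection $(V_{*},\nabla)$ with vanishing $p$-curvature, the given parabolic frame $e_i/s^{\gamma_i}$ is not horizontal, and some explicit device is needed to manufacture horizontal sections carrying the right twists.

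Moreover, your weight bookkeeping (``the weight exponent is divided by $p$'') is not even well defined inside the allowed weight lattice: if $\gamma=m/n\in[0,1)$ with $\gcd(n,p)=1$ is a weight of $V$, then $\gamma/p$ has denominator divisible by $p$, which the paper's standing hypothesis on weights forbids. The correct recipe, and the heart of the paper's proof, is arithmetic: take the unique $\ell\in\{0,\dots,p-1\}$ with $p\mid(m+\ell n)$ and set $\beta=(\gamma+\ell)/p$, so that $\{p\beta\}=\gamma$ and $\beta$ again has denominator prime to $p$. The paper implements this with Katz's projector $\mathrm{P}=\sum_{w=0}^{p-1}\prod_{i}\frac{(-s_i)^{w}}{w!}\prod_{i}(\nabla_i)^{(w)}$, which is $f^{-1}(\mathcal{O}_{X'})$-linear, via the identity $\mathrm{P}\big(e_i/s^{\gamma_i}\big)=\mathrm{P}\big(e_i s^{\ell_i}\big)/s^{\gamma_i+\ell_i}=\mathrm{P}\big(e_i s^{\ell_i}\big)/F^{*}\big((s')^{\beta_i}\big)$: multiplying by $s^{\ell_i}$ turns the fractional twist into a Frobenius pullback, past which $\mathrm{P}$ slides, so the horizontal sections $\mathrm{P}(e_i s^{\ell_i})$ form a frame of $V^{\nabla}$ with weights $\beta_i$; the quasi-inverse property is then closed by the explicit map $T(e)=\sum_{w}\prod_{i}\frac{s_i^{w}}{w!}\mathrm{P}\prod_{i}\nabla_i^{w}(e)$ inverting $V^{\nabla}\otimes\mathcal{O}_X\to V$. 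Without this projector-plus-$\ell$-shift mechanism (or an equivalent substitute), your outline does not produce the descended parabolic structure and does not close the loop on quasi-inversion, so as written it is an incomplete sketch rather than an alternative proof.
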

    In alignment with \cite{LSZ15NonabeHodexptwi}, for a flat connection $(H,\nabla)$, we employ an exponential twist specific to the parabolic bases, producing a triple $(H',\nabla',\psi(\nabla))$. 
    Here, $(H',\nabla')$ represents a parabolic connection with vanishing $p$-curvature, while $\psi(\nabla)$ is the $p$-curvature of $\nabla$, now corresponding with $\nabla'$. 
    Through Cartier descent, the triplet $(H',\nabla',\psi(\nabla))$ descends into a parabolic Higgs bundle, thereby concluding the parabolic Cartier transformation.
    The main result is as follows:

    \begin{theorem}[=Theorem \ref{thm of catier correspondence}]
    We have the following equivalence of categories,
        \begin{equation*}
        \begin{tikzcd}
            {\{\mathrm{MIC}_{lf,*}^{p-1}\}/(X,D)} & {\{\mathrm{HIG}^{p-1}_{lf,*}\}/(X',D')}
            \arrow["{C_{\exp}}", shift left, harpoon, from=1-1, to=1-2]
            \arrow["{C_{\exp}^{-1}}", shift left, harpoon, from=1-2, to=1-1]
        \end{tikzcd}.
        \end{equation*}
    \end{theorem}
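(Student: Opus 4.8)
The plan is to construct the two functors explicitly through the exponential twist on parabolic bases, and then to verify that they are mutually quasi-inverse. First I would build the inverse Cartier transformation $C_{\exp}^{-1}$. Starting from a parabolic Higgs bundle $(E'_*,\theta')$ in $\HIG^{p-1}_{lf,*}/(X',D')$, I take its Frobenius twist: the Frobenius pullback $F^{*}E'_*$ carries the canonical connection $\nabla_{\mathrm{can}}$, which has vanishing $p$-curvature and recovers $E'_*$ under the parabolic Cartier descent of Theorem \ref{thm of Cartier Descent}. Working in a parabolic basis of $F^{*}E'_*$, I then perform the exponential twist following \cite{LSZ15NonabeHodexptwi}, conjugating $\nabla_{\mathrm{can}}$ by the exponential of a nilpotent operator built from $F^{*}\theta'$; this exponential is a genuine polynomial, hence well defined in characteristic $p$, precisely because the nilpotency exponent of $\theta'$ and of its residues is at most $p-1$. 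The output is a connection $\nabla'$, and I would check---using the transition data of the parabolic basis and the compatibility of the twist with the fractional-exponent factors---that the local pieces glue to a global parabolic connection whose residues are nilpotent of exponent less than $p-1$. Flatness follows from $\theta'\wedge\theta'=0$ together with the flatness of $\nabla_{\mathrm{can}}$, exactly as in the smooth case, so that $C_{\exp}^{-1}$ lands in $\MIC^{p-1}_{lf,*}/(X,D)$.

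For the Cartier transformation $C_{\exp}$ I would reverse this construction. Given $(H_*,\nabla)$ in $\MIC^{p-1}_{lf,*}/(X,D)$ with nilpotent $p$-curvature $\psi(\nabla)$, the exponential twist---again expressed through the parabolic basis---produces a triple $(H'_*,\nabla',\psi(\nabla))$ in which $\nabla'$ has vanishing $p$-curvature while $\psi(\nabla)$ becomes horizontal for $\nabla'$. By the parabolic Cartier descent of Theorem \ref{thm of Cartier Descent}, $(H'_*,\nabla')$ is the Frobenius pullback of a unique parabolic bundle $V'_*$ on $(X',D')$; since $\psi(\nabla)$ is horizontal, it descends through the functor $( \, )^{\nabla'}$ to an endomorphism-valued one-form $\theta'$ on $V'_*$, and its integrability descends to the Higgs condition $\theta'\wedge\theta'=0$. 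The $p-1$ bounds on the nilpotency of $\psi(\nabla)$ and of the residues of $\nabla$ ensure both that the inverse exponential is polynomial and that $(V'_*,\theta')$ lies in $\HIG^{p-1}_{lf,*}/(X',D')$.

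It then remains to prove that $C_{\exp}$ and $C_{\exp}^{-1}$ are quasi-inverse. Here I would exploit that both functors are assembled from the same two ingredients---Frobenius (pullback and descent) and the exponential twist---applied in opposite orders. Composing $C_{\exp}\circ C_{\exp}^{-1}$, the inverse exponential twist cancels the exponential twist introduced in the inverse Cartier step, so the composite reduces to Frobenius descent applied to a Frobenius pullback, which Theorem \ref{thm of Cartier Descent} identifies naturally with the identity; the opposite composite $C_{\exp}^{-1}\circ C_{\exp}$ is handled symmetrically. These natural isomorphisms would be written on parabolic bases and checked to respect the transition data and the Higgs and connection operators. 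I expect the main obstacle to be exactly this bookkeeping at the parabolic level: verifying that the exponential twist interacts correctly with the fractional-exponent parabolic basis, so that the twisted objects are honest parabolic objects with the claimed residue nilpotency, and that the cancellations in the quasi-inverse identities remain compatible with the gluing data across $D$. The purpose of introducing parabolic bases is precisely to reduce these verifications to essentially the same local computations as in the smooth case of \cite{LSZ15NonabeHodexptwi}, with the fractional twists absorbed transparently into the basis.
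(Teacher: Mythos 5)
Your proposal is correct and takes essentially the same route as the paper: both assemble $C_{\exp}^{-1}$ and $C_{\exp}$ from the exponential twist performed on parabolic bases (carried out in the paper in Propositions \ref{prop of par inverse Cartier} and \ref{prop of par Cartier}) and then deduce the equivalence from the mutual cancellation of the two twists combined with the parabolic Cartier descent of Theorem \ref{thm of Cartier Descent}. The paper's own proof of the theorem is precisely this one-line observation, with the functor constructions and residue/nilpotency verifications delegated to the preceding subsections exactly as you outline.
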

                
    Inspired by the research conducted by Alfaya and Biswas \cite{AlBis23pullandpshofparbd}, we investigate the functorial properties of the parabolic inverse Cartier transformation. 
    Specifically, we demonstrate that the pushforward and pullback operations are commutative with this transformation.
    \begin{theorem}[=Theorem \ref{thm of par inverse commutes with pullback and push forward}]
        Let $X$ and $Y$ be two smooth curves over a field $k$ of characteristic $p$, and $f$ be a separable finite morphism from $X$ to $Y$ with a $W_{2}(k)$-lifting $\tilde{f}: \tilde{X} \to \tilde{Y}$, and we assume that all ramification indices are co-prime to $p$.
        Let $D$ (resp. $\tilde{D}$) be a divisor of $Y$ (resp. $\tilde{Y}$), and let $B=(f^{*}D)_{\mathrm{red}}$ (resp. $\tilde{B}$) be the reduced divisor of $f^{*}D$ (resp. $\tilde{f}^{*}\tilde{D}$).
        Then the functors $f_{\para,*}$ and $f_{\para}^{*}$ commute with parabolic $C^{-1}$ and $C$ transformations.
        Moreover, if $f$ is Galois, $C^{-1}$ and $C$ also commute with $(f_{*})^{G}$.
    \end{theorem}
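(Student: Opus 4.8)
The plan is to reduce the statement to a local computation in the language of parabolic bases, where both the Cartier-type transformations and the covering functors are given by explicit formulas. Recall that the parabolic inverse Cartier $C^{-1}$ is assembled in two stages: one first forms the Frobenius twist of the input parabolic Higgs bundle, and then applies the exponential twist of \cite{LSZ15NonabeHodexptwi} to the Higgs field together with the transition data encoded by the parabolic bases. It therefore suffices to show that $f_{\para}^{*}$ (resp. $f_{\para,*}$) commutes with each of the two stages separately; commutativity of $C$ then follows formally, since by Theorem \ref{thm of catier correspondence} $C$ is a quasi-inverse of $C^{-1}$.

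For the pullback direction, I would first exploit the commutative square of relative Frobenius morphisms
\[
\begin{CD}
X @>{F_{X}}>> X' \\
@V{f}VV @VV{f'}V \\
Y @>{F_{Y}}>> Y'
\end{CD}
\]
in which $f'$ is the Frobenius twist of $f$. Because the Frobenius twist on parabolic bundles with bases is computed by base change along the horizontal arrows, naturality of this square yields that $f_{\para}^{*}$ commutes with the Frobenius-twist stage. The hypotheses that $f$ be separable with ramification indices coprime to $p$ guarantee that pulling back multiplies the parabolic weights by the ramification index without introducing $p$-torsion phenomena, so the parabolic structure is transported cleanly; combined with the compatibility of the chosen $W_{2}(k)$-liftings $\tilde f$, and of the liftings of relative Frobenius on $X$ and $Y$, the local exponential-twist formula is preserved under $f_{\para}^{*}$. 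This settles commutativity with $f_{\para}^{*}$.

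For the pushforward, I would first treat the Galois case. When $f$ is Galois with group $G$, the functor $C^{-1}$ is canonical, hence $G$-equivariant, so it commutes with the descent functor $(f_{*})^{G}$; phrased through the equivalence of Theorem \ref{thm of Bis for vb with pb}, this is exactly the assertion that $C^{-1}$ intertwines the $G$-equivariant transformation on $(X,D)$ with the one on $(Y,B)$. The general case I would reduce to the Galois one by passing to a Galois closure $Z \to Y$ factoring through $X$: writing $f_{\para,*}$ as a composite built from the Galois descent and the already-established pullback compatibility, the commutativity with $C^{\pm 1}$ descends from $Z$ to $X$ to $Y$. The coprimality of the ramification indices to $p$ is again what makes the weight bookkeeping in $f_{\para,*}$ reversible.

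The hard part will be the simultaneous compatibility of the two twistings in the pushforward case: $f_{\para,*}$ rescales the parabolic weights by the (inverse) ramification indices, while the exponential twist uses precisely those weights and the nilpotent residues to build its twisting factor, so one must verify that these two operations do not interfere. Concretely, this amounts to checking that the $W_{2}(k)$-lifting $\tilde f$, the liftings of relative Frobenius on $X$ and $Y$, and the local coordinates adapted to the parabolic bases all fit into a single commutative cube; once that cube is in place the exponential-twist factors on $X$ and $Y$ correspond term by term, and the remaining verifications are the routine local matching of transition data already developed for Theorems \ref{thm of Bis for vb with pb} and \ref{thm of Cartier Descent}.
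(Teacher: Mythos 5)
Your pullback argument and your two formal reductions are sound and essentially match the paper: the paper likewise splits $C^{-1}$ into Frobenius pullback, exponential twist, and Cartier descent, disposes of the Frobenius and Cartier-descent stages separately (Proposition~\ref{Cartier equivalence commutes with pullback and pushforward}), proves the pullback case by identifying parabolic bases through $F\circ f=f\circ F$ and $f^{*}\dif\tilde{F}^{*}_{U_{i}}=\dif\tilde{F}^{*}_{Y_{i}}$, and the statements for $C$ and for $(f_{*})^{G}$ do follow formally from the $C^{-1}$ and $f^{*}_{\para}$ cases via Theorem~\ref{thm of catier correspondence} and Theorem~\ref{thm of Bis for vb with pb}.

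The genuine gap is the pushforward case, which is where the theorem's actual content lies. First, your Galois-case step rests on the claim that $C^{-1}$ is ``canonical, hence $G$-equivariant.'' This does not follow: $C^{-1}$ depends on the chosen $W_{2}(k)$-lifting $(\tilde{X},\tilde{D})$, and $G$-equivariance requires the $G$-action on $(X,D)$ to lift to $(\tilde{X},\tilde{D})$ --- true for tame covers, but a nontrivial fact about lifting tame automorphisms that you neither prove nor cite, and which the hypotheses (which provide only $\tilde{f}$) do not supply. Second, and more seriously, commutativity with the descent functor $(f_{*})^{G}$, defined only on $G$-equivariant objects, is not the same statement as commutativity with the full pushforward $f_{\para,*}$, defined on all objects; your proposed bridge --- a Galois closure $Z\to X\to Y$ and a factorization such as $V\mapsto\bigl(h_{\para,*}(g^{*}_{\para}V)\bigr)^{H}$ --- requires a $W_{2}(k)$-lifting of $Z$, of the maps $g$ and $h$, and of the Galois actions, all compatible with $\tilde{f}$, which is again external, unestablished input. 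What the paper does instead, and what your proposal defers to ``routine local matching,'' is the direct computation: the parabolic bases of $f_{\para,*}(F_{*},\eta)$ are $\bm{f}s_{i}^{k}/t_{i}^{(\beta+k)/n}$ for $k=0,\dots,n-1$; applying $C^{-1}$ multiplies weights by $p$, and matching with the bases $F^{*}\bm{f}s_{i}^{m}/\bigl(t_{i}^{p\beta/n}t_{i}^{m/n}\bigr)$ of $f_{\para,*}C^{-1}(F_{*},\eta)$ forces the reindexing $m=n\{pk/n\}$, i.e.\ Frobenius permutes the pushforward sectors $k\mapsto pk \bmod n$. This permutation of weights is the non-routine heart of the pushforward case; it is invisible in your ``commutative cube'' picture, and without it (or a fully justified Galois-closure reduction) the proof is incomplete.
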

           
The remainder of this paper focuses on the parabolic Higgs–de Rham flow operator of rank 2, acting on Higgs bundles over a logarithmic curve \((X_1, D_1)\). 
    We restrict our discussion to Higgs bundles of rank 2; while this represents the simplest non-trivial case, it is deeply intertwined with the theory of \textit{opers} and carries significant applications across diverse areas of mathematics. 
    Notable examples include the work of Lan, Sheng, Yang, and Zuo on the uniformization theory of $p$-adic curves \cite{LSYZ19uniofpadiccurveviaflow}, as well as the contributions of Yang and Zuo to the study of motivic rank $2$ local systems on the projective line minus four points \cite{YangZuo23Constrfamiavof}, building on the foundational work in \cite{SYZ22Projecrysrepoffmtgpandtwistedflow}.            
    
    The flow operator is explicitly defined as
    \begin{equation*}
    \mathrm{Gr}_{\Fil} \circ C^{-1}(E,\theta), 
    \end{equation*}
    where $(E,\theta)$ is a semistable graded Higgs bundle (See \eqref{def of gaded Higgs bd} for the precise definition) and $\Fil$ is a Simpson filtration on $(H,\nabla):=C^{-1}(E,\theta)$.            
    In the rank $2$ case, the Simpson filtration of $(H,\nabla)$ coincides with the parabolic Harder-Narasimhan filtration of $H$.
    The main result of this work is to provide an algorithm (Algorithm \ref{algorithm}) for determining the maximal destabilizing subbundle of $H$, which enables the explicit computation of the flow operator.
    The algorithm adapted from Sun, Yang and Zuo \cite[Appendix A]{SYZ22Projecrysrepoffmtgpandtwistedflow}, with its generalization based on the following observation:
    The exactness of the inverse Cartier transformation induces the following short exact sequence:
    \begin{equation*}
        0 \to F^{*}L_{2} \to H \to F^{*}L_{1} \to 0.
    \end{equation*}
    Let $\xi \in \mathrm{Ext}^{1}(F^{*}L_{1},F^{*}L_{2})$ denote the corresponding extension class. 
    The following relationship holds:
    \begin{theorem}[=Theorem \ref{thm of ext class}]
        The extension class $\xi$ satisfies the equation
        \begin{equation}
            \xi = F^{*}\theta \cup \kappa,
        \end{equation}
        where $\kappa$ is the Deligne-Illusie class associated with the $W_{2}(k)$ lifting $(X_{1},D_{1}) \hookrightarrow (X_{2},D_{2})$.
    \end{theorem}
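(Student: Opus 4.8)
The plan is to reduce the identity $\xi = F^{*}\theta \cup \kappa$ to an explicit Čech computation built from the local exponential-twist description of the parabolic inverse Cartier transformation. Fix an affine open cover $\{U_{\alpha}\}$ of $(X_{1},D_{1})$ on which the $W_{2}(k)$-lifting admits log Frobenius liftings $\tilde{F}_{\alpha}\colon (\tilde{U}_{\alpha},\tilde{D}_{1}\cap\tilde U_\alpha)\to (\tilde{U}'_{\alpha},\tilde{D}'_{1}\cap\tilde U'_\alpha)$ compatible with the parabolic bases. By the construction of $C^{-1}$ via exponential twist, over each $U_{\alpha}$ the transform is modeled on $(F^{*}E,\nabla_{\alpha})$, where $\nabla_{\alpha}$ is the twist of the canonical Frobenius-pullback connection by the Higgs field through the map $\tfrac{1}{p}d\tilde{F}_{\alpha}$; on overlaps $U_{\alpha\beta}$ the two local models are identified by a gauge transformation $G_{\alpha\beta}=\exp\!\big(F^{*}\theta\cdot \zeta_{\alpha\beta}\big)$, where $\zeta_{\alpha\beta}\colon F^{*}\Omega^{1}_{X'_{1}}(\log D'_{1})\to \mathcal{O}_{X_{1}}$ is the difference $\tfrac{1}{p}(d\tilde{F}_{\alpha}-d\tilde{F}_{\beta})$ of the local liftings. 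This is precisely the datum whose class in $H^{1}\big(X_{1},(F^{*}\Omega^{1}_{X'_{1}}(\log D'_{1}))^{\vee}\big)$ is the Deligne--Illusie class $\kappa$.

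I would next exploit the graded rank $2$ structure. Writing $E=L_{1}\oplus L_{2}$ with $\theta$ supported in the single off-diagonal component $L_{1}\to L_{2}\otimes \Omega^{1}_{X'_{1}}(\log D'_{1})$, one has $\theta^{2}=0$, so the exponential twist truncates: $G_{\alpha\beta}=\id + F^{*}\theta\cdot\zeta_{\alpha\beta}$ is unipotent and strictly lower-triangular with respect to the decomposition $F^{*}E=F^{*}L_{1}\oplus F^{*}L_{2}$. Consequently the transition functions of the underlying bundle $H$ are the block-diagonal transitions of $F^{*}L_{1}\oplus F^{*}L_{2}$ multiplied by $G_{\alpha\beta}$; the diagonal blocks recover $F^{*}L_{2}$ as a subbundle and $F^{*}L_{1}$ as quotient, realizing the short exact sequence $0\to F^{*}L_{2}\to H\to F^{*}L_{1}\to 0$, while the off-diagonal blocks assemble into a Čech $1$-cocycle valued in $\mathcal{H}om(F^{*}L_{1},F^{*}L_{2})$ representing the extension class $\xi$.

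It then remains to read off this cocycle. By the previous step its $(\alpha\beta)$ component is the composite of $F^{*}\theta\colon F^{*}L_{1}\to F^{*}L_{2}\otimes F^{*}\Omega^{1}_{X'_{1}}(\log D'_{1})$ with the contraction by $\zeta_{\alpha\beta}$, i.e.\ the image of $F^{*}\theta\otimes\{\zeta_{\alpha\beta}\}$ under the pairing
\begin{equation*}
H^{0}\!\big(X_{1},\mathcal{H}om(F^{*}L_{1},F^{*}L_{2})\otimes F^{*}\Omega^{1}_{X'_{1}}(\log D'_{1})\big)\otimes H^{1}\!\big(X_{1},(F^{*}\Omega^{1}_{X'_{1}}(\log D'_{1}))^{\vee}\big)\longrightarrow \mathrm{Ext}^{1}(F^{*}L_{1},F^{*}L_{2}).
\end{equation*}
By definition of the cup product this image is exactly $F^{*}\theta\cup[\{\zeta_{\alpha\beta}\}]=F^{*}\theta\cup\kappa$, which yields the claimed formula $\xi=F^{*}\theta\cup\kappa$ after checking that the Čech representative computed above agrees with the abstract Yoneda/cup-product description of $\xi$.

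The main obstacle I anticipate lies not in the formal bookkeeping but in controlling the parabolic and logarithmic data throughout. One must verify that the log Frobenius liftings and the exponential twist can be chosen compatibly with the parabolic bases, so that $\zeta_{\alpha\beta}$ genuinely defines the logarithmic Deligne--Illusie class on $(X_{1},D_{1})$ and the truncated gauge $G_{\alpha\beta}$ preserves the parabolic filtration and acts correctly on residues along $D_{1}$; here the hypothesis that all ramification indices are coprime to $p$ is what makes the division by $p$ in $\zeta_{\alpha\beta}$ integral and the relevant maps invertible. A secondary point requiring care is confirming that no diagonal or higher-order contributions from the exponential twist perturb the extension class, which is exactly where the vanishing $\theta^{2}=0$ in the graded rank $2$ case is essential.
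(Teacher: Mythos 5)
Your proposal is correct and follows essentially the same route as the paper: both read off the \v{C}ech cocycle of the extension from the exponential-twist gluing data $\exp(h_{ij}F^{*}\theta)$, which truncates to $\id + h_{ij}F^{*}\theta$ since $\theta^{2}=0$ for a graded rank-$2$ Higgs field, and identify the resulting off-diagonal entries $F^{*}\theta_{i}\,h_{ij}$ with the \v{C}ech representative of $F^{*}\theta\cup\kappa$. Your write-up simply makes explicit the truncation and block-triangularity step that the paper compresses into the phrase ``by the definition of inverse Cartier.''
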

    The theorem provides a clear characterization of how the bundle transforms under the inverse Cartier transformation. 
    Moreover, as a direct consequence, we can identify the maximal destabilizing subbundle of $H$, as demonstrated in Proposition \ref{m d s}. 
    In the appendix, we simplify the Li-Sheng conjecture using our algorithm, which also serves as a specific instance of the broader algorithmic framework.

    \subsection{Organization}
The organization of the paper is as follows.

    In Section \ref{Local Theory for Parabolic Bundle}, we introduce the category of vector bundles with parabolic bases and show that the category is equivalent to the category of parabolic bundles.
    
    In Section \ref{Local theory of par lam con}, we extend this equivalence to the category of parabolic $\lambda$-connections. 
    We also define pushforward and pullback functors for these objects, thereby establishing further local properties.

    In Section \ref{Par Higgs-de Rham flow}, we develop the parabolic non-abelian Hodge correspondence in positive characteristic by employing the concepts introduced earlier. 
    Additionally, this section provides an algorithm specifically for the rank $2$ Higgs-de Rham operator.

    Finally, in Section \ref{Discussion}, we discuss potential applications of our results and outline promising directions for future research.
```

\section{Local Theory for Parabolic Bundles}\label{Local Theory for Parabolic Bundle}

\subsection{Vector Bundles with Parabolic bases}
This section presents the concept of parabolic bases.
Suppose $X$ is a smooth variety over a field $k$ and $D$ is a normal crossing divisor.
Write $D=\sum_{i=1}^{h}D^{i}$ as a union of irreducible components, and we assume that $D_{i}$ are themselves smooth meeting transversally.

```
\textbf{For clarity, we first assume $h=1$.}

We begin with the following short exact sequence 
\begin{equation}\label{ses of O(D)}
0 \to L \stackrel{s_{D}}{\to} L(D) \to \mathcal{O}_{D} \to 0,
\end{equation}
    where $L$ is a line bundle on $X$.
We give a local description of the sequence \eqref{ses of O(D)}.    	
Consider an open cover defined by 
    \begin{equation}\label{open cover}
    \mathcal{U} = \{ U_i \mid i \in I \}
    \end{equation}
    where, for each $U_{i}$, the line bundles $L|_{U_{i}}$ and $L(D)|_{U_{i}}$ are trivial.
We choose the bases $e_{i}$ for $L$ and $f_{i}$ for $L(D)$.
If $U_{i} \cap D = \emptyset$, the morphism $s_{D}|_{U_{i}}:L|_{U_{i}} \to L(D)|_{U_{i}}$ is an isomorphism. 
    Hence, we make an identification of the bases $e_{i}$ and $f_{i}$ on $U_{i}$.

If $U_{i}\cap D \neq \emptyset$, suppose $s_{D}(e_{i})=s_{i}f_{i}$. 
Therefore, according to the short exact sequence \eqref{ses of O(D)}, the locus where $s_{i}$ vanishes is exactly $U_{i} \cap D$.
Thus, we can define $f_{i}$ as $e_{i}/s_{i}$ (where $s_{D}$ is omitted for simplicity). 
    We then proceed to compare the transition data using this identification.            
    Suppose the transition information can be expressed as:
    \begin{equation*}
        \phi(e_{i}) = g_{ij} e_{j}, \quad \psi(f_{i}) = p_{ij} f_{j},
    \end{equation*}
    then the relationship between $g_{ij}$ and $p_{ij}$ naturally emerges from converting the original bases to a parabolic bases, as demonstrated in Table \ref{tab: transition function of L(D)}.

\begin{table}[ht]
    \centering
    \begin{tabular}{|c|c|}
        \hline 
         & Transition data $p_{ij}$ of $L(D)$ \\
        \hline
        $U_{i} \cap D = U_{j} \cap D = \emptyset$ & $g_{ij}$ \\
        \hline
        $U_{i} \cap D = \emptyset, \, U_{j} \cap D \neq \emptyset$ & $g_{ij} s_{j}$\\
        \hline
        $U_{i} \cap D \neq \emptyset, \, U_{j} \cap D = \emptyset$ & $g_{ij} / s_{i}$\\
        \hline
        $U_{i} \cap U_{j} \cap D \neq \emptyset$ & $s_{j}g_{ij}/s_{i}$\\
        \hline
    \end{tabular}
    \caption{The transition data of $L(D)$.}
    \label{tab: transition function of L(D)}
\end{table}

In summary, the bases of the bundle $ L(D) $ can be identified with that of $L$ in the neighborhood outside of $D$. 
    Within the open neighborhood intersecting $D$, the basis of $ L(D) $ corresponds to the bases of $L$ twisted by a section whose vanishing locus is $D \cap X_i$.
The transition data are naturally derived from the base change.      
    The parabolic basis is constructed similarly to the previously outlined bases for $L(D)$.
    However, the twisted component can take the form of a fractional power of the section $s_{i}$.       
    We start by giving the precise definition in the case where the rank is $1$.
Let the notations be as above and let $\alpha$ be a rational number \footnote{We can also make corresponding definitions for $\alpha$ being irrational, but to avoid overly divergent content, we still limit it to the case of rational numbers.} in $\left[0,1\right)$. 
    If $\mathrm{char}(k)=p$, we require the denominator of $\alpha$ to be coprime to $p$.

    Given $L$ and $\mathcal{U} = \{ U_i \mid i \in I \}$ in the previous context, the parabolic basis of $L$ for a given weight $\alpha$, with respect to $\mathcal{U}$, is defined as follows: 
    For each index $i$, if $U_i \cap D$ is not empty, the parabolic basis on $U_i$ is $f_i = e_i / s_i^\alpha$. 
    Conversely, if $U_i \cap D$ is empty, we set $f_i = e_i$.    	
Similarly, formal transition data for the parabolic basis are inherently induced by the base change transformation of transition data associated with $L$. 
    We arrange these transition data in the following Table \ref{tab: transition function of (L,alpha)}.
    \begin{table}[ht]
        \centering
        \begin{tabular}{|c|c|}
            \hline 
             & Transition data $p_{ij}$ of $(L,\alpha)$ \\
            \hline
            $U_{i} \cap D = U_{j} \cap D = \emptyset$ & $g_{ij}$ \\
            \hline
            $U_{i} \cap D = \emptyset, \, U_{j} \cap D \neq \emptyset$ & $g_{ij} s_{j}^{\alpha}$\\
            \hline
            $U_{i} \cap D \neq \emptyset, \, U_{j} \cap D = \emptyset$ & $g_{ij}/s_{i}^{\alpha}$\\
            \hline
            $U_{i} \cap U_{j} \cap D \neq \emptyset$ & $g_{ij}(s_{j}/s_{i})^{\alpha}$\\
            \hline
        \end{tabular}
        \caption{The transition data of $(L,\alpha)$.}
        \label{tab: transition function of (L,alpha)}
    \end{table}  
    
    We denote the line bundle with parabolic bases mentioned above by $(L, \alpha, D)$. 
    When clarity permits, we may simplify this notation to $(L, \alpha)$.  	
Next, we define the morphisms between line bundles equipped with parabolic bases.
    Let $(E,\alpha)$ and $(F,\beta)$ be two line bundles with parabolic bases.
    \begin{definition}\label{def for morphism for vbp of rank 1}
        A morphism $f$ from $(E,\alpha)$ to $(F,\beta)$ is a sheaf morphism $f$ from $E$ to $F$, such that if $\alpha > \beta$, then $D \subset (f)_{0}$.
    \end{definition}
    
    To illustrate the reasonableness of the definition, we translate the aforementioned condition into local data for the reader.
    Let $\mathcal{U} = \{ U_i \mid i \in I \}$ be an open cover of $X$ such that both $E$ and $F$ are free on each $U_{i}$. 
    For a morphism $f \in \mathrm{Hom}(E,F)$, we can locally express $f_{i}: E|_{U_{i}} \to F|_{U_{i}}$ in terms of the basis as $f_{i}(e_{i}) = u_{i}f_{i}$, where $u_{i}$ is a section over $U_{i}$.

    The morphism induced on the parabolic bases is given by:
    \begin{equation}\label{transition data for parabolic bases}
    f_i \left( e_i / s_i^{\alpha} \right) = u_{i} s_i^{\beta-\alpha} \left( e_i / s_i^{\beta} \right).
    \end{equation}
    The condition that $D \subset (f)_{0}$ when $\alpha > \beta$ is equivalent to the requirement that the valuation of $u_{ij} s_{i}^{\beta-\alpha}$ with respect to $D$, valued in $\mathbb{Q}$, is positive. 
    This is a natural and necessary condition.

Next, we define the local parabolic vector bundle with parabolic bases in the higher-rank case.

\begin{definition}[Local model]\label{def of local pb}
        Let \( U \) be an affine scheme, and let \( D \) be an irreducible and reduced divisor on \( U \). 
        Let \( \bm{\alpha} = \{\alpha_1, \ldots, \alpha_r\} \) denote a sequence of rational numbers in the interval \([0, 1)\), with the additional condition that the denominator of each \( \alpha_i \) is invertible in \( \Gamma(U, \mathcal{O}_U) \).
        
        Let \( M \) be a free \( \mathcal{O}_U \)-module of rank \( r \). 
        The \textit{parabolic data} of type \( \bm{\alpha} \) on \( M \) is defined as the tuple:
        \[
        \left( \{e_1, \ldots, e_r\}; \{\alpha_1, \ldots, \alpha_r\} \right),
        \]
        where \( \{e_1, \ldots, e_r\} \) forms a basis for \( M \). 
        
        The associated \textit{parabolic basis} is given by the set:
        \[
        \left\{ \frac{e_1}{s^{\alpha_1}}, \ldots, \frac{e_r}{s^{\alpha_r}} \right\},
        \]
        where \( s \) is a section over \( U \) such that \( (s)_0 = D \). 
        
        For notational convenience, we denote the parabolic data by \( (M, \bm{e}, \bm{\alpha}) \), or more compactly as \( (\bm{e}, \bm{\alpha}) \), where \( \bm{e} = \{e_1, \ldots, e_r\} \) and \( \bm{\alpha} = \{\alpha_1, \ldots, \alpha_r\} \).
    \end{definition}
    
    It is obvious that $(M,\bm{e},\bm{\alpha})$ is a direct sum of line bundles with parabolic bases, \ie, we can write it as $(M,\bm{e},\bm{\alpha})=\oplus_{i=1}^{r} (M_{i},e_{i},\alpha_{i})$, where each $M_{i} \}$ represents the free module generated by $ e_i $.    
    Let $(L,\bm{e},\bm{\alpha})$, $( M,\bm{f},\bm{\beta})$ be two free modules with parabolic bases.
    \begin{definition}\label{def of local morphism for vb with pb}
    A morphism $g$ from $(L,\bm{e},\bm{\alpha})$ to $( M,\bm{f},\bm{\beta})$ is a sheaf morphism from $L$ to $M$ that respects the parabolic data.
    To be more specific, let us assume that $(L, \bm{e}, \bm{\alpha})$ is expressed as a direct sum:
    \begin{equation*}
    (L, \bm{e}, \bm{\alpha}) = \bigoplus_{i=1}^{r} (L_i, e_i, \alpha_i),
    \end{equation*}
    and similarly, $(M, \bm{f}, \bm{\beta})$ is represented as:
    \begin{equation*}
    (M, \bm{f}, \bm{\beta}) = \bigoplus_{j=1}^{s} (M_j, f_j, \beta_j).
    \end{equation*}
    In this setting, each mapping $g_{ij}$ from $(L_i, e_i, \alpha_i)$ to $(M_j, f_j, \beta_j)$ is required to be a morphism between the line bundles with their parabolic bases.
    \end{definition}

    We give an example of Definition \ref{def of local pb}.
    \begin{example}\label{example: toy example of pushforward}
        Consider $X = \mathrm{Spec} A\left[x\right]$ and $Y = \mathrm{Spec} A\left[y\right]$, with a morphism $f$ from $X$ to $Y$ constructed by mapping $y$ to $x^{N}$, where $N$ is a positive integer invertible in $A$.
        Subsequently, we demonstrate the existence of a natural parabolic bases for the bundle $f_{*}\mathcal{O}_{X}$.
        
        Observe that $A\left[x\right]$ forms a free $A\left[y\right]$-module with rank $N$ via the ring homomorphism $f^{\#}$. 
        This enables a canonical decomposition in accordance with the group $\mathrm{Aut}(f)(\cong \mathbb{Z}/N)$ acting as an $A[y]$-module:
        \begin{equation*}
            A\left[x\right] \cong A\left[y\right] \oplus xA\left[y\right] \oplus \cdots \oplus x^{N-1}A\left[y\right],
        \end{equation*}
        where $x^{i}A\left[y\right]$ corresponds to the free $A[y]$-module generated by $x^{i}$.
        For each $x^{i}$, a parabolic weight $i/N$ is assigned. 
        As a result, the corresponding parabolic bases can be expressed as 
        \begin{equation*} 
        \left\{ 1, \frac{x}{y^{1/N}}, \ldots, \frac{x^{N-1}}{y^{(N-1)/N}} \right\}. 
        \end{equation*}
        
        In geometrical terminology, we have accomplished the following: The vector bundle $f_{*}\mathcal{O}_{X}$ is equipped with a tautological parabolic bases characterized by the parabolic data given as:
        \begin{equation*}
        (\{1,x,\cdots,x^{N-1}\};\{0,1/N,\cdots,(N-1)/N\}).
        \end{equation*}                
    \end{example}            
    We shall generalize this toy example to the pushforward of a finite morphism of a locally abelian parabolic $\lambda$ -connection in Section \ref{def of pushforward of par}.

    \begin{definition}[Global definition]\label{def of global vb with pb}
        Let $\bm{\alpha} = \{\alpha_{1}, \ldots, \alpha_{r}\}$, and let $D$ be as defined in Definition \ref{def of local pb}. 
        Let $V$ be a vector bundle over $X$, and let $\mathcal{U} = \{ U_{i} \}_{i \in I}$ be an open affine cover of $X$ such that $V|_{U_{i}}$ is free. 
        The \textbf{parabolic data} of type $\bm{\alpha}$ associated with $V$ is defined as follows:
        
        \begin{itemize}
            \item If $U_{i} \cap D \neq \emptyset$, the parabolic data on $U_{i}$ is given by the tuple
            \[
            (\bm{e}_{i}; \bm{\alpha})_{i} := \left( \{e_{i}^{1}, \ldots, e_{i}^{r} \}; \{\alpha_{1}, \ldots, \alpha_{r}\} \right)_{i},
            \]
            where $\{e_{i}^{1}, \ldots, e_{i}^{r} \}$ is a basis for $V|_{U_{i}}$ over $U_{i}$. 
            The corresponding \textbf{parabolic basis} is the set 
            \[
            \left\{ \frac{e_{i}^{1}}{s_{i}^{\alpha_{1}}}, \ldots, \frac{e_{i}^{r}}{s_{i}^{\alpha_{r}}} \right\},
            \]
            where $s_{i}$ is a section over $U_{i}$ such that $(s_{i})_0 = D \cap U_{i}$.
        
            \item If $U_{i} \cap D = \emptyset$, the parabolic data on $U_{i}$ is trivial and can be represented as
            \[
            (\bm{e}_{i}; \bm{0})_{i} := \left( \{e_{i}^{1}, \ldots, e_{i}^{r} \}; \{0, \ldots, 0\} \right)_{i},
            \]
            where $\{e_{i}^{1}, \ldots, e_{i}^{r} \}$ is a basis for $V|_{U_{i}}$ over $U_{i}$.
        \end{itemize}
        
        The transition isomorphism $g_{ij}$ of $V$ is a morphism of vector bundles with parabolic bases. Specifically:
        \begin{itemize}
            \item If $U_{i} \cap U_{j} \cap D \neq \emptyset$, $g_{ij}$ maps $(V|_{U_{i}}, \bm{e}_{i}, \bm{\alpha})|_{U_{ij}}$ to $(V|_{U_{j}}, \bm{e}_{j}, \bm{\alpha})|_{U_{ij}}$.
            \item If $U_{i} \cap U_{j} \cap D = \emptyset$, $g_{ij}$ is only required to be an isomorphism of vector bundles.
        \end{itemize}
        
        We denote the vector bundle with the parabolic data as
        \[
        \left( V, \{ U_{i} \}_{i \in I}, \{\bm{e}_{i}\}_{i \in I}, \bm{\alpha} \right),
        \]
        or simply $( V, \{\bm{e}_{i}\}_{i \in I}, \bm{\alpha} )$.
    \end{definition}

    We give a more concrete description of the definition \ref{def of global vb with pb}.    	
If $U_{i} \cap D = \emptyset $, the parabolic bases are just the original bases, we write it as
    \begin{equation*}
    \{ f_{1},\cdots,f_{r}\}=\{ e_{1},\cdots,e_{r}\},
    \end{equation*}
    where the latter set is a bases of $V|_{U_{i}}$.            
If $U_{i} \cap D \neq \emptyset$, then the parabolic bases data is 
    \begin{equation*}
    (\{e_{i}^{1},\cdots,e_{i}^{r}\},\{ \alpha_{1},\cdots,\alpha_{r}\}).
    \end{equation*}
    Using a suitable renumbering, we may assume that $\alpha_{i}^{j}$ does not decrease with respect to $j$ for the convenience of exposition.
    And let $s_{i}$ be a section on $U_{i}$ such that $(s_{i})_{0} = D\cap U_{i}$.
    The parabolic bases, by definition, are
\begin{equation*}
\{ f_{i}^{1},\cdots,f_{i}^{r} \} ^{T} = \mathrm{diag}\{ s_{i}^{-\alpha_{1}},\cdots,s_{i}^{-\alpha_{r}} \} \{ e_{i}^{1},\cdots,e_{i}^{r} \}^{T}
\end{equation*}
    if $U_{i}\cap D\neq \emptyset$,
    \begin{equation*}
\{ f_{i}^{1},\cdots,f_{i}^{r} \} ^{T} = \{ e_{i}^{1},\cdots,e_{i}^{r} \}^{T}
\end{equation*}
    if $U_{i}\cap D =  \emptyset$.

Next, we explicitly illustrate the condition for $g_{ij}$ in Definition \ref{def of global vb with pb}.   	
Denote the transition matrix with respect to the bases $\{e_{i}^{\ell}\}$ on $U_{i}$ and the bases $\{ e_{j}^{l}\}$ on $U_{j}$ as $E_{ij} = \mathrm{Mat}(a_{ij})$, \ie, 
\begin{equation*}
g_{ij}\{ e_{i}^{1},\cdots,e_{i}^{r} \}^{T} = E_{ij} \{  e_{j}^{1},\cdots,e_{j}^{r} \}_{\ell=1}^{T}.
\end{equation*}
If $U_{i} \cap U_{j} \cap D \neq \emptyset$, by definition of \ref{def of hom of par b}, $a_{ij}$ is required to be divided by $s_{i}$ (or equivalently $s_{j}$) if $\alpha_{i} > \alpha_{j}$.
    In other words, if we group the weights $\alpha_{l}$ into
    \begin{equation}\label{group of weights}
        \{ \alpha_{1},\cdots,\alpha_{t_{1}}\}_{1}, \cdots, \{ \alpha_{t_{l-1}+1},\cdots, \alpha_{r} \}_{b},
    \end{equation}
    so that the values within the same group are identical.

We write $E_{ij}$ block matrix as
\begin{equation}\label{matrix for ordered bases}
E_{ij} = \begin{Bmatrix}
E_{ij}^{11} & E_{ij}^{12} & \cdots & E_{ij}^{1b}\\
E_{ij}^{21}		   & E_{ij}^{22} & \cdots & E_{ij}^{2b}\\
\vdots  &				& 		& \vdots \\
E_{ij}^{b1} & E_{ij}^{b2} & \cdots & E_{ij}^{bb}
\end{Bmatrix}.
\end{equation}
    based on the grouping of the weights \eqref{group of weights}. 
    Thus $E_{ij}|_{D\cap U_{ij}}$ is a block upper triangular matrix, \ie,
    \begin{equation*}
E_{ij}|_{D\cap U_{ij}} = \begin{Bmatrix}
E_{ij}^{11}|_{D\cap U_{ij}} & E_{ij}^{12}|_{D\cap U_{ij}} & \cdots & E_{ij}^{1b}|_{D\cap U_{ij}}\\
0		   & E_{ij}^{22}|_{D\cap U_{ij}} & \cdots & E_{ij}^{2b}|_{D\cap U_{ij}}\\
\vdots  &				& 		& \vdots \\
0 & 0 & \cdots & E_{ij}^{bb}|_{D\cap U_{ij}}
\end{Bmatrix}.
\end{equation*}
    We conclude the local exposition of Definition \ref{def of global vb with pb} by presenting a comparison of the transition isomorphisms.

    \begin{table}[ht]
        \centering
        \begin{tabular}{|c|c|}
            \hline 
             & Transition data $p_{ij}$ of $V$ \\
            \hline
            $U_{i} \cap D = U_{j} \cap D = \emptyset$ & $E_{ij}$\\
            \hline
            $U_{i} \cap D = \emptyset, \, U_{j} \cap D \neq \emptyset$ & $E_{ij} \mathrm{diag}\{s_{j}^{\alpha_{1}},\cdots,s_{j}^{\alpha_{r}}\}$\\
            \hline
            $U_{i} \cap D \neq \emptyset, \, U_{j} \cap D = \emptyset$ & $\mathrm{diag}\{s_{i}^{-\alpha_{1}},\cdots,s_{i}^{-\alpha_{r}}\}E_{ij}$\\
            \hline
            $U_{i} \cap U_{j} \cap D \neq \emptyset$ & $\mathrm{diag}\{s_{i}^{-\alpha_{1}},\cdots,s_{i}^{-\alpha_{r}}\}E_{ij} \mathrm{diag}\{s_{j}^{\alpha_{1}},\cdots,s_{j}^{\alpha_{r}}\}$\\
            \hline
        \end{tabular}
        \caption{The transition data for higher rank.}
        \label{tab: transition data for higher rank}
    \end{table}

    \begin{remark}\label{equivalence between two datum}
        The transition data for the original bases of the underlying bundle and those for the parabolic bases are mutually convertible. 
        Consequently, knowing the transition data for the parabolic bases allows us to reconstruct the transition data for the original bases. 
        This equivalence enables us to focus on the parabolic bases, which will be shown to be more convenient and to exhibit stronger functorial properties in subsequent discussions.
    \end{remark}

\textbf{Next, we consider the case where $h \geq 1$.}

    Recall the notation $D=D^{1}+\cdots+D^{h}$.
    Let 
    \begin{equation*}
    \bm{\alpha} = (\{ \alpha_{1}^{1},\cdots,\alpha_{r}^{1}), \cdots,(\alpha_{1}^{h},\cdots,\alpha_{r}^{h}\}).
    \end{equation*}
    We only have to modify the definition of parabolic data to a multiindex set.
    The parabolic data on an open subscheme $U_{i}$ is the following set:
    \begin{equation*}
    (\{e_{i}^{1},\cdots,e_{i}^{r}\};\{\alpha_{1}^{t_{1}},\cdots,\alpha_{r}^{t_{1}}\},\cdots,\{\alpha_{1}^{t_{k}},\cdots,\alpha_{r}^{t_{k}}\}),
    \end{equation*}
    where $(e_{i}^{1},\cdots,e_{i}^{r})$ is the bases of the bundle $V|_{U_{i}}$, and 
    \begin{equation}\label{def of index set on Xi}
    T_{i}=\{t_{1},\cdots,t_{k}\} \subset \{ 1,2,\cdots,h\}
    \end{equation}
    is the sub-index set such that $U_{i}$ intersects $D^{t_{i}}$ if and only if $t_{\ell}\in T_{i}$.    
    Let $s_{i,t_{j}}$ be a section on $U_{i}$, such that $(s_{i,t_{j}})_{0}=U_{i} \cap D^{t_{j}}$. 
    Let $\bm{s}_{i}^{\bm{\alpha}_{\ell}}=s_{i,t_{1}}^{\alpha_{\ell}^{t_{1}}}\cdots s_{i,t_{k}}^{\alpha_{\ell}^{t_{k}}}$ for $\ell = 1,2,\cdots,r$.
    The parabolic bases are          
    \begin{equation}\label{explicit parabolic bases for h bigger 1}
   \left\{ e^{1}_{i}/\bm{s}_{i}^{\bm{\alpha}_{1}},\cdots, e^{r}_{i}/\bm{s}_{i}^{\bm{\alpha}_{r}} \right\}.
    \end{equation}

    Similarly, we can define morphisms between free modules with parabolic bases as per Definition \ref{def for morphism for vbp of rank 1}, local parabolic bundles with parabolic bases as described in Definition \ref{def of local pb}, and global parabolic bundles with parabolic bases as outlined in Definition \ref{def of global vb with pb}. 
    Additionally, we can specify the transition data for these parabolic bases.
    Given that the only complication arises from more cumbersome indices without introducing additional conceptual challenges, we omit these details for the sake of clarity.

Next, we define the morphism between vector bundles with parabolic bases.
    Let $(E,\{ U_{i}\}_{i\in I},\{\bm{e}_{i}\}_{i\in I},\bm{\alpha})$ and $(F,\{ U_{j}'\}_{j\in J},\{\bm{f}_{j}\}_{j\in J},\bm{\beta})$ be two vector bundles with parabolic bases.
    Without loss of generality, we may assume that the two open covers coincide.
    
    \begin{definition}\label{def of mor of vb with pb}
    A morphism \( f \) from \( (E, \{\bm{e}_{i}\}_{i \in I}, \bm{\alpha}) \) to \( (F, \{\bm{f}_{i}\}_{i \in I}, \bm{\beta}) \) is an \( \mathcal{O}_{X} \)-morphism \( f: E \to F \) such that, for each open set \( U_{i} \), the restriction of \( f \) to \( U_{i} \) yields a morphism of parabolic bundles 
    \[
    f|_{U_{i}}: (E|_{U_{i}}, \bm{e}_{i}, \bm{\alpha}) \to (F|_{U_{i}}, \bm{f}_{i}, \bm{\beta}),
    \]
    as defined in Definition \ref{def of local morphism for vb with pb}.
    \end{definition}    
    So far, we have defined the category of vector bundles equipped with parabolic bases, which we denote as $\mathcal{PVB}$.    	
Subsequently, we introduce the tensor and $\mathcal{H}om$ within the category $\mathcal{PVB}$.
Let $(E,\{ U_{i}\}_{i\in I},\{\bm{e}_{i}\}_{i\in I},\bm{\alpha})$ and $(F,\{U_{i}'\}_{i\in I},\{\bm{f}_{i}\}_{i\in I},\bm{\beta}) \in \mathcal{PVB}$ be vector bundles with parabolic bases of rank $r_{1}$ and $r_{2}$, respectively.            
As before, we only provide a detailed construction when $h=1$; the case where $h>1$ is a trivial generalization.
    For the index pair $(\ell,m)$ with $\ell \leq r_{1}$, $m \leq r_{2}$, define a set of bases
\begin{equation*}	
(e_{i}^{\ell} \otimes f_{i}^{m})_{0} = e_{i}^{\ell} \otimes f_{i}^{m}/s_{i}^{ [\alpha_{\ell} + \beta_{m}]}
\end{equation*}
    on $U_{i}$, where $[\, \cdot \, ]$ is Gaussian rounding function. 

Then it is easy to check that these local bases $\{\{(e_{i}^{l} \otimes f_{i}^{m})_{0}\}_{\ell,m}\}_{i \in I}$ gives rise to a vector bundle, we denote it by $G$, on $X$.
    We put parabolic data
    \begin{equation}\label{parabolic data of tensor}
        (\{\bm{e} \otimes \bm{f}\}_{i\in I};\{\bm{\alpha} + \bm{\beta}\})=(\{(e_{i}^{\ell} \otimes f_{i}^{m})_{0}\}_{\ell,m;i\in I};\{\{\alpha_{\ell} + \beta_{m}\}\}_{\ell,m})
    \end{equation}
    on $U_{i}$, where $\{ \alpha_{\ell} + \beta_{m} \}$ is the decimal part of $\alpha_{\ell} + \beta_{m}$.
    Then it is elementary, but involved, to check that the transition data satisfies the condition in \ref{def of global vb with pb}.
    Hence \eqref{parabolic data of tensor} are the parabolic data.
    And the corresponding parabolic bases is
    \begin{equation}\label{tensor bases}
        \{(e_{i}^{\ell} \otimes f_{i}^{m})/s_{i}^{\alpha_{\ell}+\beta_{m}}\}_{\ell\leq r_{1},m\leq r_{2}} = \{e_{i}^{\ell}/s_{i}^{\alpha_{\ell}} \otimes f_{i}^{m}/s_{i}^{\beta_{m}}\}_{\ell\leq r_{1},m\leq r_{2}}.
    \end{equation}

    We define the parabolic tensor $(E,\{\bm{e}_{i}\}_{i\in I},\bm{\alpha})\otimes (F,\{\bm{f}_{i}\}_{i\in I},\bm{\beta})$ as 
    \begin{equation*}
    (G,\{\bm{e} \otimes \bm{f}\}_{i\in I};\{\bm{\alpha} + \bm{\beta}\}).
    \end{equation*}
    For the functor $\mathcal{H}om(*,\mathcal{O}_{X})$, we define the bases
\begin{equation*}	
(\check{e_{i}}^{\ell})_{0} = \check{e_{i}}^{\ell}/s_{i}^{[-\alpha_{\ell}]}
    \end{equation*}
    on $U_{i}$.
    These bases give rise to a vector bundle $H$ as well.
    The parabolic data on $U_{i}$ is defined to be 
    \begin{equation*}
        (\bm{\check{e}}_{i\in I};\{-\bm{\alpha}\})=(\{(\check{e_{i}}^{\ell})_{0}\}_{\ell<r_{1},i \in I};\{\{-\alpha_{\ell}\}\}_{\ell \leq r_{1}} ),
    \end{equation*}
and hence the parabolic bases on $U_{i}$ are  
    \begin{equation}\label{bases of Hom}
        \{ (\check{e_{i}}^{1})_{0}/s^{-\{ \alpha_{1}\}},\cdots,(\check{e_{i}}^{r_{1}})_{0}/s^{-\{ \alpha_{r}\}}\} = \{\check{e_{i}}^{1} s_{i}^{\alpha_{1}},\cdots,\check{e_{i}}^{r_{1}} s_{i}^{\alpha_{r}} \}.
    \end{equation}
    We define $\mathcal{H}om((E,\{ U_{i}\}_{i\in I},\{\bm{e}_{i}\}_{i\in I},\bm{\alpha}),\mathcal{O}_{X})=(H,\bm{\check{e}}_{i\in I};\{-\bm{\alpha}\})$.    	
    Finally, we define $\mathcal{H}om(V,W):=\mathcal{H}om(V) \otimes W$ for arbitrary $\mathcal{H}om$.

\begin{remark}
        Initially, the construction may appear intricate; however, it is fundamentally grounded in the simplicity of the parabolic bases, as demonstrated in \eqref{tensor bases} and \eqref{bases of Hom}. 
        These constructions closely resemble those of ordinary vector bundles, making them intuitive and straightforward — a theme that recurs consistently throughout this work.
    \end{remark}

    \begin{remark}  
	Similarly, we can further elaborate on the symmetric product and the determinant for parabolic bundles.   
    \end{remark}

    	\subsection{The correspondence between the category \texorpdfstring{{\boldmath$\mathcal{PVB}$}}{PVB} and the category of parabolic bundles}\label{the section: establising equivalence}

   In this subsection, we demonstrate an equivalence between the category of vector bundles with parabolic bases and the category of abelian parabolic bundles. 
    Let us first revisit the concept of a parabolic sheaf. 
    The principal references we use are \cite{IyerSim07Arelationbetweenparchern} and \cite{KrisSheng20perideRhamovercur}.
    
\begin{definition}\label{def of par bundle}
	A parabolic sheaf on $(X,D)$ is a collection of coherent torsion-free sheaves $F_{\alpha}$ indexed by multiindex $\bm{\alpha} = (\alpha_{1},\cdots,\alpha_{h})$ with $\alpha_{i} \in \mathbb{Q}$, together with inclusions
    \begin{equation*}
        F_{\bm{\alpha}} \hookleftarrow F_{\bm{\beta}}
    \end{equation*}
    whenever $\alpha_{i} \leq \beta_{i}$ (we will write this condition $\bm{\alpha}\leq \bm{\beta}$ shortly), subject to the following hypotheses:
    \begin{itemize}
        \item (normalization) let $\bm{\delta}^{i}$ denote the multi-index $\delta^{i}_{i}=1$, $\delta_{j}^{i} = 0$, $i\neq j$, then $F_{\bm{\alpha}+\bm{\delta}^{i}}=F_{\bm{\alpha}}(-D_{i})$ (compatible with the inclusion); and
        \item (semicontinuity) for any given $\bm{\alpha}$ there exists $c>0$ such that for any multiindex $\bm{\epsilon}$ with $0 \leq \epsilon_{i} <c $ we have $F_{\bm{\alpha}-\bm{\epsilon}} = F_{\bm{\alpha}}$.
    \end{itemize}
\end{definition}

    For each component $D^{i}$ of $D$, we have $F|_{0}(D_{i})=F_{0}/F_{-\delta^{i}}$.
    For any $0 \leq t < 1$, let $F_{D_{i}}$ be the images of $F_{t\delta^{i}}$ in $F|_{0}(D_{i})$, and put
    \begin{equation*}
        \mathrm{Gr}_{D_{i},t} := F_{D_{i},t}/F_{D_{i},t+\epsilon} 
    \end{equation*}
    with $\epsilon$ small.
    From the definition, there are only finite numbers $t$ in $\left[0,1\right)$ such that $\mathrm{Gr}_{D_{i},t}$ is nonzero.   
    These values are called the \emph{weights} of $F$ along the component $D_{i}$.
    We only consider parabolic structures with rational weights.
    
    We are interested in the parabolic sheaf that satisfies the locally abelian condition.
\begin{definition}\label{def of locally abelian par bd}
	A parabolic sheaf $F_{*}$ is called a parabolic bundle if it satisfies the locally abelian condition: in a Zariski neighborhood of any point $p \in X$, there is an isomorphism between $F_{*}$ and a direct sum of parabolic line bundles. 
\end{definition}

    A morphism between two parabolic bundles, represented as $ f: E_{*} \to F_{*} $, is described as a set of maps $ f_{t}: E_{t} \to F_{t} $ for every $ t \in \mathbb{R} $, consistent with the filtration structure (refer to \cite{Yok95InfdefofparHigbd}).        
    We provide a detailed description of the morphism between parabolic line bundles.
    \begin{example}\label{morphism between parabolic line bundle}
    Let $E_{*}$ and $F_{*}$ be two parabolic line bundles with parabolic weights $\alpha$ and $\beta$, respectively.
    
    If $\alpha>\beta$, then by definition, we have the commutative diagram
    \begin{equation*}
    \begin{tikzcd}
        {E_{0}} & {E_{\beta}=E_{0}} & {E_{\alpha}=E_{0}(-D)} \\
{F_{0}} & {F_{\beta}=F_{0}(-D)} & {F_{\alpha}=F_{0}(-D)}
\arrow["{f_{0}}"', from=1-1, to=2-1]
\arrow[hook', from=1-2, to=1-1]
\arrow["{f_{\beta}}"', from=1-2, to=2-2]
\arrow[hook', from=1-3, to=1-2]
\arrow["{f_{\alpha}}", from=1-3, to=2-3]
\arrow[hook', from=2-2, to=2-1]
\arrow[hook', from=2-3, to=2-2]
    \end{tikzcd},
    \end{equation*}
    consequently, it must hold that $D \subset (f_{0})_{0}$, and every other $f_{t}$ is derived from $f_{0}$.
    If $\alpha \leq \beta$, then we have
    \begin{equation*}
        \begin{tikzcd}
        {E_{0}} & {E_{\alpha}=E_{0}(-D)} & {E_{\beta}=E_{0}(-D)} \\
{F_{0}} & {F_{\alpha}=F_{0}} & {F_{\beta}=F_{0}(-D)}
\arrow["{f_{0}}"', from=1-1, to=2-1]
\arrow[hook', from=1-2, to=1-1]
\arrow["{f_{\beta}}"', from=1-2, to=2-2]
\arrow[hook', from=1-3, to=1-2]
\arrow["{f_{\alpha}}", from=1-3, to=2-3]
\arrow[hook', from=2-2, to=2-1]
\arrow[hook', from=2-3, to=2-2]
    \end{tikzcd},
    \end{equation*}
    There are no further constraints on $f_{0}$, and each $f_{t}$ is similarly derived from $f_{0}$.
    \end{example}

    The morphism for parabolic bundles straightforwardly aligns with the one described in Definition \ref{def for morphism for vbp of rank 1}.
    This alignment is crucial for the forthcoming construction.
    The category of parabolic bundles is represented by $\mathrm{Par}\mathcal{VB}$. 
    We will now illustrate the equivalence between the category $\mathrm{Par}\mathcal{VB}$ and the category $\mathcal{PVB}$.

    \textbf{The Functor \(\alpha \colon \mathrm{Par}\mathcal{VB} \to \mathcal{PVB}\)}
    
    Let \(\mathcal{U} = \{U_i\}\) be an open cover of \(X\) such that the restriction of \(V_*\) to each \(U_i\) decomposes as a direct sum of parabolic line bundles:
    \[
    V_*|_{U_i} \cong \bigoplus_{j=1}^r L_{i,*}^j.
    \]
    For each \(i\) and \(j\), let \(e_i^j\) denote a basis of \((L_{i,*}^j)_0\) and \(\alpha_{i,j}^\ell\) denote the parabolic weight of \(L_{i,*}^j\) along the divisor component \(D^\ell\).
    Organize these parabolic weights into an array:
    \[
    \left( 
        \{\alpha_{i,1}^{t_1}, \ldots, \alpha_{i,r}^{t_1}\}; 
        \cdots; 
        \{\alpha_{i,1}^{t_{k}}, \ldots, \alpha_{i,r}^{t_{k}}\} 
    \right),
    \]
    where \(T_i = \{t_1, \ldots, t_k\}\) is the index set defined in \eqref{def of index set on Xi}. 
    By appropriately ordering the parabolic weights, we may assume the array \((\alpha_{i,1}^\ell, \ldots, \alpha_{i,r}^\ell)\) remains constant across all open sets \(U_i\). Consequently, we suppress the index \(i\) and write the array as \((\alpha_1^\ell, \ldots, \alpha_r^\ell)_{\ell=1}^h\).    
    In fact, the data  
    \[
    \left( V_0,\, \{U_i\}_{i \in I},\, \{e_i^1, \ldots, e_i^r\}_{i \in I},\, (\alpha_1^\ell, \ldots, \alpha_r^\ell)_{\ell=1}^h \right)
    \]  
    forms a vector bundle with parabolic bases. To verify this, we analyze the transition data: let  
    \[
    \mathrm{Mat}_{r \times r}(a_{\ell m})_{\ell,m} \quad \text{with } \ell, m = 1, \ldots, r
    \]  
    denote the transition matrix on \(U_{ij}\) relating the bases \(\{e_i^1, \ldots, e_i^r\}\) and \(\{e_j^1, \ldots, e_j^r\}\). The critical condition -- that \(s_{i,t}\) divides \(a_{\ell m}\) whenever \(\alpha_\ell^t > \alpha_m^t\) -- follows directly from the compatibility of parabolic structures under base change, as demonstrated in Example \ref{morphism between parabolic line bundle}.  
    
    For morphisms: let \(f \colon V_* \to W_*\) be a morphism of parabolic bundles. By an identical argument, the underlying map \(f_0\) satisfies the compatibility conditions in Definition~\ref{def of mor of vb with pb}, thereby inducing a morphism of vector bundles with parabolic bases.

    \textbf{The Functor \(\beta \colon \mathcal{PVB} \to \mathrm{Par}\mathcal{VB}\)}

    Given a vector bundle with parabolic bases \((V, \{U_i\}, \bm{e}_i, \bm{\alpha})\), we construct a parabolic bundle as follows. Assume \(h = 1\) without loss of generality. On each \(U_i\), where the parabolic data is \((\{e_i^1, \ldots, e_i^r\}, \{\alpha_1, \ldots, \alpha_r\})\), define parabolic line bundles \(L_{i,*}^\ell\) by:
    \begin{itemize}
        \item Letting \((L_{i,*}^\ell)_0\) be the free \(\mathcal{O}_{U_i}\)-module generated by \(e_i^\ell\);
        \item Assigning parabolic weight \(\alpha_\ell\) along \(D\).
    \end{itemize}
    
    To glue these locally defined parabolic bundles into a global parabolic bundle \(V_*\), we define subsheaves \(\{V_t\}_{t \in \mathbb{R}}\) in \(V\) satisfying Definition~\ref{def of par bundle}. Using notation from \eqref{group of weights} and \eqref{matrix for ordered bases}, for \(\alpha_{t_j} < \beta \leq \alpha_{t_j+1}\) and \(X_k \cap D \neq \emptyset\), choose local bases on \(X_k\):
    \[
    \{f_k^{1,\beta}, \ldots, f_k^{r,\beta}\} = \left\{ \frac{e_i^1}{s_k}, \ldots, \frac{e_i^{t_j}}{s_k}, e_i^{t_j+1}, \ldots, e_i^r \right\}.
    \]
    The transition matrix between \(U_i\) and \(U_k\) becomes:
    \[
    F_{ik}^\beta = \mathrm{diag}\Big(\underbrace{s_i, \ldots, s_i}_{t_j}, 1, \ldots, 1\Big) \cdot E_{ik} \cdot \mathrm{diag}\Big(\underbrace{s_k^{-1}, \ldots, s_k^{-1}}_{t_j}, 1, \ldots, 1\Big),
    \]
    where \(E_{ik}|_D\) is block upper triangular. Since \(F_{ik}^\beta \in \mathrm{Mat}_{r \times r}(\mathcal{O}(U_{ik}))^*\), the sheaves \(V^\beta\) glue canonically to form a parabolic bundle \(V_*\). Define \(\beta(V) \coloneqq V_*\), extending to \(h > 1\) by repeating along each divisor component.
    
    For morphisms \(g \colon (V, \bm{e}_i, \bm{\alpha}) \to (W, \bm{f}_i, \bm{\beta})\), the maps \(\Fil_t V \to \Fil_t W\) inherit parabolic compatibility through the same gluing process. The functors \(\alpha\) and \(\beta\) form a categorical equivalence: \(\alpha\) maps parabolic bundles to those with parabolic bases, while \(\beta\) reconstructs the parabolic structure. That \(\alpha\) and \(\beta\) are quasi-inverses follows directly from the construction.
    
In the last part of this section, we show that the equivalence preserves the $\otimes$ and $\mathcal{H}om$ functor.
Recall the definition of tensor and $\mathcal{H}om$ functor in the category of parabolic bundles.
\begin{definition}[Definition of $\mathcal{H}om$]\label{def of hom of par b}
	Let $\mathrm{Hom}(E_{*},\mathcal{O}_{X})$ denote the set of parabolic morphisms. 
	We let $\mathcal{H}om(E_{*},\mathcal{O})$ denote the sheaf of homomorphisms defined by taking $\mathrm{Hom}(E_{*},\mathcal{O})(U) = \mathrm{Hom}(E_{*}|U,\mathcal{O}|U)$. 
	We also define a parabolic sheaf $\mathcal{H}om(E_{*},\mathcal{O})_{*}$ by taking $\mathrm{Hom}(E_{*},\mathcal{O})_{\alpha} = \mathrm{Hom}(E_{*},\mathcal{O}_{\alpha})$ and $i^{\alpha,\beta}_{Hom(E_{*},\mathcal{O})}$ for $\alpha \geq \beta$ to be the natural maps induced by 
    \begin{equation*}
        i^{[\alpha,\beta]}_{\mathcal{O}}: \mathcal{O}[\alpha]_{*} \to \mathcal{O}[\beta]_{*}.
    \end{equation*}
\end{definition}

\begin{example}\label{examp of par hom}
	Let $L_{*}$ be a parabolic line bundle.
	Then, by the definition of a parabolic bundle, we claim that $L^{\vee}_{*}=\mathcal{H}om(L_{*},\mathcal{O})_{*}$ can be described as follows: $L^{\vee}_{0} = L^{\vee}(-D)$, and $L^{\vee}_{1-\alpha} = L^{\vee}$.    		
\end{example}   	
\begin{definition}[Definition of tensor]\label{def of tensor of par}
	Let $\tau: X-D \hookrightarrow X$ denote the natural inclusion.
	Suppose $V_{*}$ and $W_{*}$ are two parabolic bundles; define $(V_{*}\otimes W_{*})_{\alpha} = \sum_{\alpha_{1}+\alpha_{2}=\alpha}V_{\alpha_{1}}\otimes W_{\alpha_{2}}$, seen as a subbundle of $\tau_{*}\tau^{*}(E \otimes F)$. 
	We take $i^{\alpha,\beta}_{V_{\alpha}\otimes W_{\beta}}$ as the natural inclusion map.
\end{definition}

\begin{example}\label{examp of par tensor}
	Let $L_{*}$ and $M_{*}$ be two parabolic line bundles.
	If $\alpha_{1}+\alpha_{2} < 1$, the tensor product is simpler.
	$(L_{*}\otimes M_{*})_{t} = L\otimes M$, if $0 \leq t \leq \alpha_{1}+\alpha_{2}$; $(L_{*}\otimes M_{*})_{t} = L\otimes M(-D)$, if $t > \alpha_{1}+\alpha_{2}$.    		
	A special case is if $\alpha_{1}+\alpha_{2}\geq1$.
	$(L_{*}\otimes M_{*})_{t} = L\otimes M(D)$, if $0 \leq t \leq \alpha_{1}+\alpha_{2}-1$ $(L_{*}\otimes M_{*})_{t} = L\otimes M$, if $t > \alpha_{1}+\alpha_{2}-1$.
	
\end{example}
        
\begin{proposition}\label{prop of equi preserves Hom and tensor}
	The functors $\alpha$ and $\beta$ are compatible with $\otimes$ and $\mathcal{H}om$.
\end{proposition}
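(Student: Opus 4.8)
The plan is to verify compatibility locally on the trivializing open cover $\mathcal{U} = \{U_i\}$ and then check that the local identifications glue, since both functors $\alpha$ and $\beta$ are defined by gluing local direct-sum decompositions into line bundles. I would treat the $\otimes$ and $\mathcal{H}om$ cases separately, and in each case reduce immediately to parabolic \emph{line} bundles, because the locally abelian condition (Definition \ref{def of locally abelian par bd}) guarantees that on each $U_i$ every parabolic bundle splits as a direct sum of parabolic line bundles, and both tensor (Definition \ref{def of tensor of par}) and $\mathcal{H}om$ (Definition \ref{def of hom of par b}) distribute over direct sums. Thus it suffices to match the parabolic weights and the underlying line-bundle data produced by the two constructions on each line-bundle summand.

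For the tensor compatibility, I would start from two parabolic line bundles $L_*$ and $M_*$ with weights $\alpha$ and $\beta$, apply $\alpha$ to obtain parabolic bases $e/s^\alpha$ and $f/s^\beta$, and then form the tensor in $\mathcal{PVB}$ using \eqref{parabolic data of tensor} and \eqref{tensor bases}, which assigns the weight $\{\alpha+\beta\}$ (the fractional part) and the underlying basis $(e\otimes f)/s^{[\alpha+\beta]}$. On the other side, I would compute $\beta$ of this output and compare it against Example \ref{examp of par tensor}: when $\alpha+\beta<1$ the underlying bundle is $L\otimes M$ with weight $\alpha+\beta$, and when $\alpha+\beta\geq 1$ the Gaussian rounding $[\alpha+\beta]=1$ twists the underlying bundle by $(-D)$, matching the $L\otimes M(D)$ normalization in Example \ref{examp of par tensor} after accounting for the shift in the filtration index. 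The point is that the integer part $[\alpha_\ell+\beta_m]$ absorbed into the basis corresponds exactly to the normalization axiom $F_{\bm{\alpha}+\bm{\delta}^i}=F_{\bm{\alpha}}(-D_i)$ in Definition \ref{def of par bundle}. For $\mathcal{H}om$, the analogous check uses \eqref{bases of Hom}: the dual basis $\check{e}\,s^\alpha$ carries weight $\{-\alpha\}=1-\alpha$, which I would match against Example \ref{examp of par hom}, where $L^\vee_0 = L^\vee(-D)$ and $L^\vee_{1-\alpha}=L^\vee$; the twist by $s^{[-\alpha]}=s^{-1}$ in \eqref{bases of Hom} is precisely the $(-D)$ appearing in $L^\vee_0$.

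The key observation making these comparisons work is that the two functors agree on the level of weights because both the fractional-part convention in $\mathcal{PVB}$ and the filtration-index convention in $\mathrm{Par}\mathcal{VB}$ encode the same real-valued parabolic weight modulo the normalization shift; the integer parts are invisible to the parabolic structure and are uniformly absorbed into the choice of underlying bundle. After settling the line-bundle case, I would assemble the general statement by taking direct sums over the index pairs $(\ell,m)$ and invoking the naturality of the isomorphisms $\alpha\circ\beta\cong\mathrm{id}$ and $\beta\circ\alpha\cong\mathrm{id}$ established earlier in this section. I expect the main obstacle to be bookkeeping the gluing: one must check that the transition data for the tensor and dual, as recorded in Table \ref{tab: transition data for higher rank} and equations \eqref{parabolic data of tensor}–\eqref{bases of Hom}, remain compatible with the block-triangular structure \eqref{matrix for ordered bases} across overlaps $U_{ij}$, so that the locally constructed isomorphisms patch to a global natural isomorphism of parabolic bundles rather than merely an isomorphism on each chart.
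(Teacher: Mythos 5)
Your proposal is correct and takes essentially the same approach as the paper's proof: both reduce locally to direct sums of parabolic line bundles (via the locally abelian condition) and then match the $\mathcal{PVB}$ constructions \eqref{parabolic data of tensor} and \eqref{bases of Hom} against the line-bundle computations of Examples \ref{examp of par tensor} and \ref{examp of par hom}, with the integer part of the weight sum accounting for the $(D)$-twist of the underlying bundle. The only difference is that you spell out the $\mathcal{H}om$ case and the gluing of the local identifications explicitly, which the paper leaves implicit.
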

\begin{proof}
	We only have to prove it locally.
	Choose an open neighborhood $U$, such that both parabolic bundles $V_{*}$ and $W_{*}$ split as parabolic line bundles, \ie, 
	\begin{equation*}
	V_{*}|_{U}\cong \oplus V_{*}^{i}, \quad W_{*}|_{U} \cong \oplus W_{*}^{j}.
	\end{equation*}
	Then by the definition of a tensor product,
	\begin{equation*}
	V_{*} \otimes W_{*}|_{U} \cong \oplus V_{*}^{i} \otimes W_{*}^{j}, 
	\end{equation*}
	and hence
	\begin{equation*}
	(V_{*} \otimes W_{*})_{0}|_{U} = \oplus (V_{*}^{i} \otimes W_{*}^{j})_{0}.
	\end{equation*}
	Then by the computation of the example \ref{examp of par tensor}:
	If $\alpha_{i} + \beta_{j} < 1$, then $(V_{*}^{i} \otimes W_{*}^{j})_{0} \cong V \otimes W$, the corresponding filtration weights are $\alpha_{i} + \beta_{j}$.
	If $\alpha_{i} + \beta_{j} \geq 1$, then $(V_{*}^{i} \otimes W_{*}^{j})_{0} \cong V \otimes W(D)$, the corresponding filtration weight is $\alpha_{i} + \beta_{j} - 1$.
	
	Comparing the parabolic structure with the parabolic data \ref{parabolic data of tensor} yields our desired outcome. 
	  Using the same approach, we can demonstrate that the result holds for the $\mathcal{H}om$ functor.
		
\end{proof}

    The results of this section are encapsulated in the subsequent theorem.
    \begin{theorem}\label{thm of equivalence between two category}
        Let notations be as above. The category of vector bundles with parabolic bases $\mathcal{PVB}/(X,D)$ is tensor equivalent to the category of parabolic bundles $\mathrm{Par}\mathcal{VB}/(X,D)$.
    \end{theorem}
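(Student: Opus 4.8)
The plan is to establish the tensor equivalence by exhibiting the functors $\alpha \colon \mathrm{Par}\mathcal{VB} \to \mathcal{PVB}$ and $\beta \colon \mathcal{PVB} \to \mathrm{Par}\mathcal{VB}$ (both already constructed above), verifying that they are mutually quasi-inverse, and then invoking Proposition \ref{prop of equi preserves Hom and tensor} to upgrade the categorical equivalence to a tensor equivalence. First I would confirm that $\alpha$ and $\beta$ are well-defined functors: for $\alpha$, one checks that a local splitting $V_*|_{U_i} \cong \bigoplus_j L_{i,*}^j$ produces a basis $\{e_i^j\}$ whose transition matrices satisfy the divisibility condition of Definition \ref{def of global vb with pb} (namely $s_{i,t} \mid a_{\ell m}$ whenever $\alpha_\ell^t > \alpha_m^t$), which follows from the morphism computation in Example \ref{morphism between parabolic line bundle}; for $\beta$, one checks that the subsheaves $\{V_t\}$ glue across overlaps, which reduces to verifying that the modified transition matrices $F_{ik}^\beta$ remain invertible over $\mathcal{O}(U_{ik})$ because $E_{ik}|_D$ is block upper triangular.

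Next I would verify the two natural isomorphisms $\beta \circ \alpha \cong \mathrm{id}_{\mathrm{Par}\mathcal{VB}}$ and $\alpha \circ \beta \cong \mathrm{id}_{\mathcal{PVB}}$. The composite $\beta \circ \alpha$ takes a parabolic bundle, locally decomposes it, records basis-plus-weight data, and then reassembles the filtration $\{V_t\}$ from exactly that data; by construction the reconstructed filtration agrees with the original one on the overlaps where $V_*$ splits, so the identity map on $V_0$ furnishes the natural isomorphism. For $\alpha \circ \beta$, starting from $(V, \{\bm{e}_i\}, \bm{\alpha})$ one produces a parabolic bundle whose local splitting returns the same bases and weights up to the ordering convention, so again the identity on the underlying bundle $V$ gives the isomorphism. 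Naturality in both cases is immediate because every construction is performed on $V_0$ (resp. $V$) and morphisms are, by Definition \ref{def of mor of vb with pb} and Example \ref{morphism between parabolic line bundle}, determined by their action on the weight-zero piece.

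The main obstacle — and the step deserving the most care — is checking that these identifications are genuinely independent of the auxiliary choices (the open cover $\mathcal{U}$, the local splittings into parabolic line bundles, and the defining sections $s_i$), so that the natural isomorphisms are globally well-defined rather than merely pointwise. Concretely, on a quadruple overlap two different local splittings of $V_*$ are related by an automorphism respecting the weight filtration, and I must confirm that this automorphism intertwines the two induced parabolic-base structures; this is where the block-upper-triangularity of $E_{ij}|_D$ in \eqref{matrix for ordered bases} does the real work, guaranteeing that reordering within a constant-weight block and rescaling by $s_i^{\pm}$ leaves the glued object unchanged. Once independence of choices is secured, functoriality and naturality follow formally.

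Finally, to promote this to a \emph{tensor} equivalence I would appeal directly to Proposition \ref{prop of equi preserves Hom and tensor}, which asserts that $\alpha$ and $\beta$ commute with $\otimes$ and $\mathcal{H}om$; combined with the equivalence just established, this shows the two monoidal structures correspond under the equivalence, and the compatibility of the associativity and unit constraints is inherited from the underlying $\mathcal{O}_X$-module tensor product since both tensor operations are defined as $\mathcal{O}_X$-linear constructions on $V_0$ twisted by the rounding of weight sums, as in \eqref{parabolic data of tensor} and Example \ref{examp of par tensor}. This completes the proof.
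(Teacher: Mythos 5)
Your proposal is correct and follows essentially the same route as the paper: it uses the same functors $\alpha$ and $\beta$ constructed in Section \ref{the section: establising equivalence}, verifies well-definedness via Example \ref{morphism between parabolic line bundle} and the block upper-triangularity of $E_{ij}|_{D}$, argues quasi-inverseness from the construction, and invokes Proposition \ref{prop of equi preserves Hom and tensor} for the tensor compatibility. The only difference is one of emphasis — you spell out the independence-of-choices check that the paper leaves implicit in its ``follows directly from the construction'' remark, which is a welcome elaboration rather than a departure.
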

    
    In accordance with the theorem outlined above, we will refer to the parabolic bases of a parabolic bundle $V_{*}$ as that of $\alpha(V_{*})$ for simplicity.    
    The definitions for parabolic degree and stability within the category of vector bundles equipped with a parabolic structure have been omitted. 
    These can be directly inferred from the definitions pertaining to parabolic bundles, as detailed by Yokogawa \cite{Yok93Comofmodofparbd}.

\section{Local theory of parabolic \texorpdfstring{{\boldmath$\lambda$}}{lambda}-connections}\label{Local theory of par lam con}

\subsection{Parabolic \texorpdfstring{{\boldmath$\lambda$}}{lambda}-connections}	
We extend the equivalence discussed in the previous section to incorporate the category of $\lambda$-connections. 
\begin{definition}\label{def of lam con}
	Suppose $\lambda\in \Gamma(\mathcal{O}_{X},X)$, a $\lambda$-connection is a pair $(H,\nabla)$, such that $H$ is a vector bundle, and $\nabla:H \to H\otimes \Omega_{X}^{1}(\log D)$ is a $k$-linear morphism satisfying the Leibniz rule
	\begin{equation}
	\nabla(fe) = f\nabla(e)+ \lambda \dif f \otimes e,
	\end{equation}
    where $f\in \mathcal{O}$, and $e\in H$.
\end{definition}

\begin{definition}\label{def of par lam connection}
        A \textit{parabolic $\lambda$-connection} over \((X,D)\) consists of a pair \((V_{*}, \nabla)\), where:
        \begin{itemize}
            \item \(V_{*}\) is a parabolic vector bundle on \(X\);
            \item \(\nabla: V_{*} \to V_{*} \otimes \Omega_X^1(\log D)\) is a $\lambda$-connection that preserves the parabolic structure. 
        \end{itemize}
        
        More precisely, the compatibility condition requires:
        \begin{itemize}
        \item For every parabolic weight \(\bm{\alpha}\), the restriction \(\nabla_{\bm{\alpha}}: V_{\bm{\alpha}} \to V_{\bm{\alpha}} \otimes \Omega_X^1(\log D)\) is a $\lambda$-connection;
        \item For \(\bm{\alpha} > \bm{\beta}\), the diagram
        \[
        \begin{tikzcd}
            V_{\bm{\alpha}} \arrow[r, "\nabla_{\bm{\alpha}}"] \arrow[d, hook] & V_{\bm{\alpha}} \otimes \Omega_X^1(\log D) \arrow[d, hook] \\
            V_{\bm{\beta}} \arrow[r, "\nabla_{\bm{\beta}}"] & V_{\bm{\beta}} \otimes \Omega_X^1(\log D)
        \end{tikzcd}
        \]
        commutes, where the vertical arrows are the natural inclusion maps.
        \end{itemize}
        
        The parabolic $\lambda$-connection is called \textit{flat} if \(\nabla_{\bm{\alpha}}\) is flat (i.e., its curvature vanishes) for every parabolic weight \(\bm{\alpha}\).
    \end{definition}

    Taking a simple analysis analogue to \ref{morphism between parabolic line bundle} indicates that $\mathrm{Res}_{D_{i}}$ preserves $V_{D_{i},t}|_{D_{i}}$ for all $t \in [0,1)$.
The parabolic $\lambda$-connections of primary interest to us are referred to as adjusted parabolic $\lambda$-connections and strong parabolic $\lambda$-connections.
    \begin{definition}\label{def of adjusted lam connection}
        Let $(V_{*}, \nabla)$ be a parabolic $\lambda$-connection defined on a pair $(X, D)$. 
        We shall refer to this connection as adjusted if the residue $\mathrm{Res}_{D_{i}}\nabla$, when restricted to $\mathrm{Gr}_{D_{i},\alpha_{i}}V$, has eigenvalues $\lambda \alpha_{i}$. 
        Furthermore, if $\mathrm{Res}_{D_{i}}\nabla$ acts on $\mathrm{Gr}_{D_{i},\alpha_{i}}V$ as $\lambda \alpha_{i} \mathrm{Id}_{D_{i}}$, where $\mathrm{id}_{D_{i}}$ denotes the identity transformation, then the connection is termed a strong parabolic $\lambda$-connection.
    \end{definition}
    \begin{remark}\label{remark: stong parabolic}
        When $\lambda=1$, the strong parabolic connection aligns with the one defined as the strong parabolic connection in \cite[Definition 4.3]{NielsAmine23Parconandrootstack}.
    \end{remark}

Then we define the $\lambda$-connection with parabolic bases.
    Let \((V, \{U_i\}_{i \in I}, \{\bm{e}_i\}_{i \in I}, \bm{\alpha})\) be a vector bundle equipped with parabolic bases, and let \(\nabla\) be a \(\lambda\)-connection on \(V\). 
    On each open subscheme \(U_i\), the connection \(\nabla\) is expressed in terms of the parabolic basis \(\{\bm{e}_i\} = \{e_i^1, \ldots, e_i^r\}\) as
    \begin{equation}\label{eq of local connection}
        \nabla\{\bm{e}_i\}^\mathsf{T} = M_i \cdot \bm{e}_i^\mathsf{T},
    \end{equation}
    where \(M_i = (m_{jk})_{r \times r}\) is an \(r \times r\) matrix with entries in \(\Omega_X^1(\log D)|_{U_i}\). 
    The connection \(\nabla\) is called \textbf{admissible with respect to the parabolic structure along \(D^i\)} if it satisfies:
    \begin{itemize}
        \item For all indices \(j, k\) with \(\alpha_j > \alpha_k\), the residue condition \(\mathrm{Res}_{D^i}(m_{jk}) = 0\) holds.
    \end{itemize}
    One readily verifies that if \(\nabla\) is admissible on an open subscheme \(U_\ell\) along \(D^i \cap U_\ell\), then admissibility is preserved on any open subscheme \(U \subseteq X\) containing \(D^i \cap U\).
    
    \begin{definition}\label{def of connection with par bases}
        A \textit{\(\lambda\)-connection with parabolic bases} is a tuple
        \begin{equation*}
            \Big(V,\, \{U_i\}_{i \in I},\, \{\bm{e}_i\}_{i \in I},\, \bm{\alpha},\, \nabla\Big),
        \end{equation*}
        where:
        \begin{itemize}
            \item \((V, \{U_i\}_{i \in I}, \{\bm{e}_i\}_{i \in I}, \bm{\alpha})\) is a vector bundle with parabolic bases;
            \item \(\nabla\) is a \(\lambda\)-connection on \(V\) that is admissible along every irreducible component \(D^i\) of \(D\).
        \end{itemize}
    \end{definition}
   A \textit{morphism of $\lambda$-connections with parabolic bases} \( f \colon (V, \{\bm{e}_i\}, \bm{\alpha}, \nabla) \to (W, \{\bm{f}_i\}, \bm{\beta}, \nabla') \) is a morphism \( f \colon (V, \{\bm{e}_i\}, \bm{\alpha}) \to (W, \{\bm{f}_i\}, \bm{\beta}) \) of parabolic vector bundles that additionally satisfies the $\lambda$-connection compatibility condition:  
   \[
   \nabla' \circ f = (f \otimes \mathrm{id}) \circ \nabla.
   \]
    
    We compute the connection matrix \( M_{\mathrm{para}} \) of \( \nabla \) in Definition~\ref{def of connection with par bases} with respect to the parabolic bases. This matrix is derived via a base change applied to the original connection matrix \( M \), combined with the Leibniz rule, resulting in functorial behavior.
    
    Let \( T_i \subseteq \{1, \ldots, h\} \) denote the subset of indices \( t \) for which \( D^t \cap U_i \neq \emptyset \). For each \( t \in T_i \), choose a defining section \( s_i^t \) of \( D^t \) on \( U_i \). The parabolic bases on \( U_i \) are then given by:
    \[
    \left\{ \frac{e_i^1}{\bm{s}_i^{\bm{\alpha}_1}}, \ldots, \frac{e_i^r}{\bm{s}_i^{\bm{\alpha}_r}} \right\},
    \]
    where \( \bm{s}_i^{\bm{\alpha}_j} = \prod_{t \in T_i} (s_i^t)^{\alpha_j^t} \).
    The transformed connection matrix \( M' \) is computed as:
    \begin{equation}
    M' = \mathrm{diag}\left( \bm{s}_i^{\bm{\alpha}_1}, \ldots, \bm{s}_i^{\bm{\alpha}_r} \right) \cdot M \cdot \mathrm{diag}\left( \bm{s}_i^{-\bm{\alpha}_1}, \ldots, \bm{s}_i^{-\bm{\alpha}_r} \right),
    \end{equation}
    where \( M \) is the original connection matrix defined in \eqref{eq of local connection}.
    
    By the Leibniz rule, the action of \( \nabla \) on the parabolic bases is:
    \begin{equation}\label{fml of base change}
    \begin{aligned}
    \nabla \{ \frac{e_i^1}{\bm{s}_i^{\bm{\alpha}_1}}, \ldots, \frac{e_i^r}{\bm{s}_i^{\bm{\alpha}_r}} \}^\mathsf{T} = ( & -\lambda \, \mathrm{diag}\left( \bm{\alpha}_1 \dif \log \bm{s}_i, \ldots, \bm{\alpha}_r \dif \log \bm{s}_i \right) \\
    & + M' ) \{ \frac{e_i^1}{\bm{s}_i^{\bm{\alpha}_1}}, \ldots, \frac{e_i^r}{\bm{s}_i^{\bm{\alpha}_r}} \}^\mathsf{T},
    \end{aligned}
    \end{equation}
    where for each \( j \),
    \[
    \bm{\alpha}_j \dif \log \bm{s}_i = \sum_{t \in T_i} \alpha_j^t \dif \log s_i^t.
    \]
    The matrix \( M_{\mathrm{para}} \coloneqq -\lambda \, \mathrm{diag}\left( \bm{\alpha}_1 \dif \log \bm{s}_i, \ldots, \bm{\alpha}_r \dif \log \bm{s}_i \right) + M' \) is the connection matrix relative to the parabolic bases. The \textit{parabolic residue} of \( \nabla \) along \( D^i \) is defined as the restriction:
    \[
    \mathrm{Res}_{D^i}(M_{\mathrm{para}}) \coloneqq M_{\mathrm{para}} \big|_{D^i}.
    \]
    \begin{definition}\label{def of nilpotent residue par conn}		
	A $\lambda$-connection with parabolic bases is called adjusted (respectively strong parabolic) when the parabolic residues of $\nabla$ at each $D^{i}$ are nilpotent (respectively zero).
\end{definition}

We extend Theorem \ref{thm of equivalence between two category} to apply to the context of $\lambda$-connections as follows:
\begin{theorem}\label{thm of equiv between adjusted par and nil conn with par bas}
        Let $(X,D)$ be as before.
        The category of parabolic $\lambda$-connections on $(X,D)$ is tensor equivalent to the category of $\lambda$-connections with parabolic bases on $(X,D)$. 
        
        Furthermore, under this tensor equivalence, the adjusted parabolic $\lambda$-connections are in one-to-one correspondence with the adjusted $\lambda$-connections with parabolic bases.
    \end{theorem}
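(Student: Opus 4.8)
The plan is to upgrade the equivalence $\alpha, \beta$ of Theorem \ref{thm of equivalence between two category} to the level of $\lambda$-connections by transporting the connection unchanged along the underlying bundle and checking that the defining conditions on the two sides match. Concretely, I would send a parabolic $\lambda$-connection $(V_*, \nabla)$ to $(\alpha(V_*), \nabla)$, where $\nabla$ is regarded as a $\lambda$-connection on the underlying bundle $V_0$; conversely a $\lambda$-connection with parabolic bases $(V, \{\bm{e}_i\}, \bm{\alpha}, \nabla)$ is sent to $(\beta(V), \nabla)$. Since $\alpha$ and $\beta$ are already quasi-inverse on the underlying bundles and the connection is carried across verbatim, the only points to verify are (i) that each functor lands in the correct target category, i.e. that admissibility on one side corresponds exactly to preservation of the parabolic filtration on the other, and (ii) that the adjusted conditions match.

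For (i), the key computation is the base change formula \eqref{fml of base change}. Working locally on a chart $U_i$ meeting $D$, write the connection matrix of $\nabla$ in the original basis $\{e_i^j\}$ as $M = (m_{jk})$; then the matrix in the parabolic basis is $M_{\mathrm{para}} = -\lambda\,\mathrm{diag}(\bm{\alpha}_1\dif\log\bm{s}_i,\ldots,\bm{\alpha}_r\dif\log\bm{s}_i) + M'$, where $M'$ is the conjugate of $M$ by $\mathrm{diag}(\bm{s}_i^{\bm{\alpha}_j})$, so that its $(j,k)$ entry is $m_{jk}\,\bm{s}_i^{\bm{\alpha}_j-\bm{\alpha}_k}$. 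I would then show that $\nabla$ preserves every step $V_{\bm{\beta}}$ of the filtration reconstructed by $\beta$ if and only if, for every pair with $\alpha_j^i > \alpha_k^i$, the residue $\mathrm{Res}_{D^i}(m_{jk})$ vanishes --- that is, exactly the admissibility condition of Definition \ref{def of connection with par bases}. The mechanism is the same one recorded for morphisms in Example \ref{morphism between parabolic line bundle}: applying $\nabla$ to a lower-weight generator can produce a component along a strictly-higher-weight generator, and such a component escapes the filtration precisely when the corresponding residue is nonzero. Because admissibility is intrinsic (independent of the chart and of the choice of defining sections, as noted after Definition \ref{def of connection with par bases}), these local verifications glue to a global statement, and the same diagram-chase shows a morphism of parabolic $\lambda$-connections is the same datum as a morphism of $\lambda$-connections with parabolic bases.

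For (ii), I would read off the residue along $D^i$ from $M_{\mathrm{para}}$. Grouping the weights as in \eqref{group of weights}, the admissibility established in (i) makes $\mathrm{Res}_{D^i}(M')$ block upper-triangular, and the extra diagonal term $-\lambda\,\mathrm{diag}(\alpha_j^i)$ shifts each diagonal block by $-\lambda\alpha_j^i$; since diagonal conjugation is trivial on each constant-weight block, the block of $M'$ attached to a weight coincides with the corresponding block of $M$. Hence the diagonal block of $\mathrm{Res}_{D^i}(M_{\mathrm{para}})$ attached to the weight $\alpha$ equals $\mathrm{Res}_{D^i}(\nabla)|_{\mathrm{Gr}_{D_i,\alpha}} - \lambda\alpha\,\mathrm{Id}$. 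Since a block upper-triangular matrix is nilpotent iff each diagonal block is nilpotent, the parabolic residue $\mathrm{Res}_{D^i}(M_{\mathrm{para}})$ is nilpotent exactly when $\mathrm{Res}_{D^i}(\nabla)$ acts on each $\mathrm{Gr}_{D_i,\alpha}V$ with all eigenvalues equal to $\lambda\alpha$ --- that is, exactly when $(V_*,\nabla)$ is adjusted in the sense of Definition \ref{def of adjusted lam connection}. This yields the claimed one-to-one correspondence between adjusted objects (and, by the same shift applied to each diagonal block, between strong objects, matching Definition \ref{def of nilpotent residue par conn}).

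Finally, tensor compatibility follows by combining Proposition \ref{prop of equi preserves Hom and tensor} with the Leibniz rule: the underlying bundle equivalence already respects $\otimes$, and the tensor-product connection $\nabla_1\otimes\mathrm{id} + \mathrm{id}\otimes\nabla_2$ is given by the same formula on both sides, so its admissibility and residues are inherited from those of the factors. I expect the main obstacle to be the verification in (i) --- pinning down precisely that admissibility is equivalent to preservation of the filtration, handling the fractional powers $\bm{s}_i^{\bm{\alpha}_j-\bm{\alpha}_k}$ appearing in $M'$ with care, and checking independence of the choices of local defining sections so that the statement globalizes; once this is in place, the residue bookkeeping in (ii) is a direct consequence of the base change formula.
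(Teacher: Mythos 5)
Your proposal is correct and follows essentially the same route as the paper's proof: transport the connection across the equivalence of Theorem \ref{thm of equivalence between two category}, use the mechanism of Example \ref{morphism between parabolic line bundle} to identify filtration-preservation with admissibility, and use the base change formula \eqref{fml of base change} to see that the $\lambda\alpha$ diagonal term cancels so that the parabolic residue is the conjugate of the graded residue, making nilpotency of the parabolic residue equivalent to the adjusted condition; tensor compatibility is likewise reduced to Proposition \ref{prop of equi preserves Hom and tensor} plus the Leibniz rule. Your write-up is somewhat more explicit than the paper's (notably the block-triangular bookkeeping and the eigenvalue shift on each constant-weight block), but it is the same argument.
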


\begin{proof}
        Without loss of generality, it suffices to prove the proposition for \( h = 1 \). Let \( V_* \) be a parabolic bundle and \( \nabla_0 \) a \( \lambda \)-connection on \( V_0 \).
        
        We aim to show that \( \nabla_0 \) underlies a parabolic connection on \( V_* \) if and only if \( \nabla_0 \) is admissible along \( D \) with respect to the parabolic weights \( \bm{\alpha}(V_*) \). This equivalence follows directly from the commutative diagram in Example~\ref{morphism between parabolic line bundle}, as the compatibility conditions are formally identical.

        Consider an adjusted parabolic \( \lambda \)-connection. Locally, choose parabolic bases 
        \[
        \{ e^1, \ldots, e^r \}
        \]
        with associated parabolic weights \( \alpha_1 \leq \cdots \leq \alpha_r \). By definition, the connection matrix of \( \nabla_0 \) relative to these bases is:
        \begin{equation*}
        \nabla_0 \big\{ e^1, \ldots, e^r \big\}_D^\mathsf{T} 
        = \Big( \lambda \, \mathrm{diag}\{ \alpha_1, \ldots, \alpha_r \} + N \Big) \dif(\log s) \big\{ e^1, \ldots, e^r \big\}_D^\mathsf{T},
        \end{equation*}
        where \( N|_D \) is upper triangular and nilpotent. 
        
        The residue of \( \nabla_0 \) with respect to the parabolic bases \( \{ e^i / s^{\alpha_i} \}_i \) is computed as:
        \[
        \left. \mathrm{diag}\big\{ s^{-\alpha_1}, \ldots, s^{-\alpha_r} \big\} \cdot N \cdot \mathrm{diag}\big\{ s^{\alpha_1}, \ldots, s^{\alpha_r} \big\} \right|_D,
        \]
        which remains nilpotent due to the upper triangular structure of \( N|_D \).
        The converse follows analogously by reversing the argument. Finally, verifying that the operations \( \otimes \) and \( \mathcal{H}om \) preserve the \( \nabla \)-structure reduces to a local computation, which is straightforward using the functoriality of connections and the Leibniz rule.
\end{proof}

    As in the parabolic bundle case, we will refer to the parabolic bases of a parabolic $\lambda$ -connection $(V_{*},\nabla)$ as that of $(\alpha(V_{*}),\nabla)$.

\subsection{Pullback and Pushforward of Parabolic \texorpdfstring{{\boldmath$\lambda$}}{lambda}-connections}

    The parabolic pullback is typically described using the root stack method, as illustrated by Iyer and Simpson \cite{IyerSim07Arelationbetweenparchern} and Ajneer and Sheldon \cite{DhiJoy12pullbackofparbdandcover}. 
    However, Alfaya and Biswas \cite{AlBis23pullandpshofparbd}, Biswas and Machu \cite{BIS19ondirectimageofparvbandparcon}, and Kumar and Majumder \cite{Kumar18parabdinpositivechar} have proposed a more streamlined approach for constructing parabolic pullbacks in the context of finite morphisms between algebraic curves. 
    We will extend this approach by employing parabolic bases to construct the pullback and pushforward of  parabolic $\lambda$-connections over varieties of arbitrary dimension.
    \subsubsection{Pullback of parabolic \texorpdfstring{{\boldmath$\lambda$}}{lambda}-connection}
    Let \( f \colon X \to Y \) be a flat morphism of smooth varieties over \( k \), and let \( D \hookrightarrow Y \) be a normal crossings divisor. Write \( D = \sum_{s=1}^h D^s \), and define the reduced pullback divisors:
    \[
    B = (f^* D)_{\mathrm{red}}, \quad B^s = (f^* D^s)_{\mathrm{red}}.
    \]
    Each \( B^s \) decomposes into irreducible components \( B^s = \sum_t B^{s,t} \). There exist positive integers \( n^{s,t} \) such that:
    \[
    f^* D^s = \sum_t n^{s,t} B^{s,t}.
    \]

    Let \( (H_*, \nabla) \) be a parabolic \(\lambda\)-connection on \( (X, D) \) with parabolic data:
    \[
    \left( \{ U_i \}_{i \in I}, \{\bm{e}_i\}_{i \in I}, \bm{\alpha} \right).
    \]
    For each \( U_i \), let \( \bm{x}_i = \{ x_i^{t_1}, \ldots, x_i^{t_k} \} \) be local sections satisfying \( (x_i^{t_\ell})_0 = U_i \cap D^{t_\ell} \). The parabolic bases on \( U_i \) are given by:
    \[
    \left\{ \frac{\bm{e}_i}{\bm{x}_i^{\bm{\alpha}}} \right\}.
    \]
    Let \( V_{i,j} \subseteq Y \) be an open affine subscheme such that \( f(V_{i,j}) \subseteq U_i \). Choose local sections \( \{ y_{i,j}^{s,t} \} \) on \( V_{i,j} \) satisfying:
    \[
    (y_{i,j}^{s,t})_0 = V_{i,j} \cap B^{s,t}.
    \]
    By the definition of \( B^{s,t} \), the pullback sections satisfy:
    \[
    x_i^{t_\ell} = \prod_t (y_{i,j}^{t_\ell, t})^{n^{\ell, t}},
    \]
    which we abbreviate as:
    \[
    x_i^{t_\ell} = \left( \bm{y}_{i,j}^{t_\ell} \right)^{\bm{n}^{t_\ell}}.
    \]
    Here, bold notation denotes multi-index products over the components of \( B \).
            
    Define a formal bases on $V_{ij}$ as 
    \begin{equation*}
    f^{*}\bm{e}_{i}/f^{*}(\bm{x}_{i}^{\bm{\alpha}})=f^{*}\bm{e}_{i}/\bm{y}_{i,j}^{\bm{n}\bm{\alpha}},
    \end{equation*}
    where $\bm{x}_{i}^{\bm{\alpha}}=\Pi_{\ell} (x_{i}^{t_{\ell}})^{\alpha_{t^{\ell}}}  $, $\bm{y}_{i,j}^{\bm{n}\bm{\alpha}}=\Pi_{\ell} (\bm{y}_{i,j}^{t_{\ell}})^{\bm{n}^{t_{\ell}}\alpha_{t^{\ell}}} $.
    The intricate expression, while structurally complex, fundamentally represents a straightforward concept that can be concisely summarized as a base change or, in other terms, a change of variables.
    Hence the parabolic data on $Y_{i,j}$ is 
    \begin{equation}\label{parabolic data of pullback}
        (f^{*}\bm{e}_{i}/\bm{y}_{i,j}^{[\bm{n}\alpha]}, \{ \bm{n}\bm{\alpha} \}).
    \end{equation}
    By the same pattern, we can show that these local bases $\{f^{*}\bm{e}_{i}/\bm{t}_{i,j}^{[\bm{n}\bm{\alpha}]}\}_{i,j}$ are naturally bonded to a global bundle, which we write as $(f^{*}_{\para}(H_{*}))_{0}$.
    Consequently, we obtain a parabolic bundle, denoted as $f^{*}_{\para}(H_{*})$, whose parabolic data are
    \begin{equation*}
        ((f^{*}_{\para}(H_{*}))_{0},\{ Y_{ij}\},f^{*}\bm{e}_{i}/\bm{y}_{i,j}^{[\bm{n}\bm{\alpha}]}, \{ \bm{n}\bm{\alpha} \}).
    \end{equation*}
    Then we consider the connection part, suppose 
    \begin{equation*}
        \nabla_{\bm{y}\frac{\partial}{\partial \bm{y}}}(\bm{e}_{i}/\bm{y}_{i}^{\bm{\alpha}}) =  \bm{e}_{i}/\bm{y}_{i}^{\bm{\alpha}}\bm{m},
    \end{equation*}
    where $\bm{m}$ is a matrix valued in $ \mathcal{O}_{U_{i}}$, then define a canonical connection $f^{*}_{\para}\nabla$ on $f^{*}H_{*}$ by setting:
    \begin{equation*}
        f^{*}_{\para}\nabla_{\bm{y}\frac{\partial}{\partial \bm{y}}}(f^{*}(\frac{\bm{e}_{i}}{\bm{y}_{i,j}^{\bm{n}\bm{\alpha}}})) = f^{*}_{\para}\nabla_{\bm{y}\frac{\partial}{\partial \bm{y}}} (\frac{f^{*}(\bm{e}_{i})}{\bm{x}_{i,j}^{\bm{n}\bm{\alpha}}}) f^{*}\bm{m} = f^{*}_{\para}\nabla_{\bm{x}\frac{\partial}{\partial \bm{x}}} (\frac{f^{*}(\bm{e}_{i})}{\bm{x}_{i,j}^{\bm{n}\bm{\alpha}}}) \bm{n} f^{*}\bm{m},
    \end{equation*}
    where $\bm{x}^{\bm{n}}=\bm{y}$.
    It is easy to check that $f^{*}_{\para}\nabla$ is a well-defined parabolic $\lambda$-connection on the parabolic bundle $f^{*}_{\para}H_{*}$.
    A specific type of pullback arises when \( f \) is finite and the ramification index of \( B^{s,t} \) are the common multiples of the denominators of parabolic weights along \(D^{s} \). 
    From Equation \eqref{parabolic data of pullback}, it follows that the parabolic structure of \( f^*H_{*} \) along \( D^{s} \) is trivial. 
    More precisely, on each component \( Y_{ij} \), we have
    \[
    f^*_{\para}H_{*} \cong \bigoplus_{j=1}^{r} f^*e^{j}_{i} \mathcal{O}_{Y_{ij}} \otimes \mathcal{O}_{Y_{ij}}(\bm{n}\bm{\alpha}(B \cap Y_{ij})).
    \]
    This construction aligns with the parabolic pullback defined by Biswas in \cite{Bisw88parbdasorbibd}.
We encapsulate the aforementioned construction in the following proposition.
	
\begin{proposition}\label{prop of pullback of bd with p}
Let the notation be as above.
    There is a natural pullback functor \( f^*_{\mathrm{para}} \) from the category of parabolic \(\lambda\)-connections on \( (Y, B) \) to the category of parabolic \(\lambda\)-connections on \( (X, D) \), mapping \( (H_*, \nabla) \) to \( (f^*_{\mathrm{para}} H_*, f^*_{\mathrm{para}} \nabla) \). 
    Locally, this functor is characterized by pulling back parabolic bases to parabolic bases.
\end{proposition}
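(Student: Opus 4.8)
The plan is to take the explicit construction preceding the statement as the definition of $f^{*}_{\para}$ on objects and morphisms, and to verify that it lands in the category of parabolic $\lambda$-connections and is functorial. There are three things to check: that the pulled-back local parabolic data $(f^{*}\bm{e}_{i}/\bm{y}_{i,j}^{[\bm{n}\bm{\alpha}]};\{\bm{n}\bm{\alpha}\})$ glue along the $V_{i,j}$ to a well-defined vector bundle with parabolic bases in the sense of Definition~\ref{def of global vb with pb}; that $f^{*}_{\para}\nabla$ is a $\lambda$-connection admissible along each $B^{s,t}$, so that by Theorem~\ref{thm of equiv between adjusted par and nil conn with par bas} the pair is a genuine parabolic $\lambda$-connection; and that a morphism of parabolic $\lambda$-connections pulls back to a morphism, satisfying the two functorial identities (respecting parabolic bases, commuting with $\nabla$). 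As throughout the paper I would reduce to $h=1$ and argue locally.

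For the gluing step, the transition datum of $f^{*}_{\para}H_{*}$ on an overlap is the conjugate $\mathrm{diag}(\bm{y}^{[\bm{n}\bm{\alpha}]})\, f^{*}E\, \mathrm{diag}(\bm{y}^{-[\bm{n}\bm{\alpha}]})$ of the pulled-back transition matrix $f^{*}E$ by the diagonal $y$-powers recording the integer parts. The content is that this matrix has regular entries and restricts, on $B$, to a block upper-triangular matrix with respect to the \emph{new} weight grouping $\{\bm{n}\bm{\alpha}\}$, which is precisely the condition of Definition~\ref{def of global vb with pb}. This is not automatic, since $\alpha\mapsto\{n\alpha\}$ may reorder the weights; the decisive point is the elementary inequality $0\le [n\alpha_{j}]-[n\alpha_{k}]\le n-1$ for $\alpha_{j}>\alpha_{k}$ in $[0,1)$, combined with $f^{*}D^{s}=\sum_{t}n^{s,t}B^{s,t}$. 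Indeed, an entry of $E$ vanishing to order one along $D^{s}$ pulls back to order $n^{s,t}$ along $B^{s,t}$, which dominates the pole $y^{-([n\alpha_{j}]-[n\alpha_{k}])}$ of order at most $n^{s,t}-1$ produced by the conjugation, leaving a regular entry vanishing on $B^{s,t}$. Independence of the choice of defining sections $y^{s,t}_{i,j}$ follows because two choices differ by a unit, so the parabolic data agree up to an admissible transition isomorphism; this simultaneously gives compatibility on triple overlaps.

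Next I would verify the connection. That $f^{*}_{\para}\nabla$ obeys the Leibniz rule is the standard pullback of a connection: the change of variables $\bm{x}=\bm{y}^{\bm{n}}$ gives $f^{*}(\dif\log\bm{x})=\bm{n}\,\dif\log\bm{y}$, which is exactly the factor $\bm{n}$ in the defining formula, and well-definedness is inherited from the bundle step. Admissibility is the heart of the matter. Writing $M$ for the connection matrix of $\nabla$ in the bases $\bm{e}_{i}$ and $R=\mathrm{Res}_{D^{s}}M$, and setting $n=n^{s,t}$, the residue of $f^{*}_{\para}\nabla$ along $B^{s,t}$ in the target bundle bases $f^{*}\bm{e}_{i}/\bm{y}^{[\bm{n}\bm{\alpha}]}$ has $(j,k)$-entry $n\, f^{*}(R_{jk})\, y^{[n\alpha_{k}]-[n\alpha_{j}]}-\lambda[n\alpha_{j}]\delta_{jk}$, restricted to $B^{s,t}$. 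The same estimate as above shows this restriction vanishes whenever $[n\alpha_{j}]\neq[n\alpha_{k}]$, while in the remaining case $[n\alpha_{j}]=[n\alpha_{k}]$ the inequality $\{n\alpha_{j}\}>\{n\alpha_{k}\}$ forces $\alpha_{j}>\alpha_{k}$, so $R_{jk}|_{D^{s}}=0$ by admissibility of $\nabla$ and the entry again vanishes. Hence the pulled-back residue is block upper-triangular for the new weights $\{\bm{n}\bm{\alpha}\}$, i.e. $f^{*}_{\para}\nabla$ is admissible, and the pair $(f^{*}_{\para}H_{*},f^{*}_{\para}\nabla)$ is a parabolic $\lambda$-connection.

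Finally, functoriality on morphisms is a repetition of the two analyses: if $g\colon H_{*}\to H'_{*}$ is locally given by a matrix with the block-triangular restriction dictated by Definition~\ref{def of mor of vb with pb}, then $f^{*}g$ inherits the analogous structure for the weights $\{\bm{n}\bm{\alpha}\}$ by the identical order-of-vanishing argument, so it is a morphism of vector bundles with parabolic bases; that it commutes with $f^{*}_{\para}\nabla$ is immediate from the functoriality of the ordinary pullback of connections. The local characterization (``parabolic bases to parabolic bases'') is built into the construction, since $f^{*}(\bm{e}_{i}/\bm{x}^{\bm{\alpha}})=f^{*}\bm{e}_{i}/\bm{y}^{\bm{n}\bm{\alpha}}$ is by definition the parabolic basis of the target. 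I expect the only genuine obstacle to be the bookkeeping of integer and fractional parts under $\alpha\mapsto\{n\alpha\}$: the reordering of weights forces one to re-establish the block structure of both the transition matrices and the residues against the new ordering, and it is precisely the inequality $[n\alpha_{j}]-[n\alpha_{k}]\le n-1$ together with the ramification identity $f^{*}D^{s}=\sum_{t}n^{s,t}B^{s,t}$ that makes every such check go through.
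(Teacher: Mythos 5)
Your overall strategy---taking the construction preceding the statement as the definition of $f^{*}_{\para}$ and then verifying the gluing of the parabolic data, the admissibility of the pulled-back connection, and functoriality on morphisms---is essentially what the paper does; the paper itself records only the construction and dismisses the verification with ``it is easy to check,'' so your write-up is in fact more detailed than the paper's. Your treatment of the bundle part is correct: for transition \emph{functions} the condition of Definition~\ref{def of global vb with pb} is divisibility by the defining section, divisibility pulls back to order $n^{s,t}$ along $B^{s,t}$, and your inequality $0\le [n\alpha_{j}]-[n\alpha_{k}]\le n-1$ together with the case analysis on $[n\alpha_{j}]$ versus $[n\alpha_{k}]$ correctly handles the reordering of weights under $\alpha\mapsto\{n\alpha\}$.

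There is, however, a genuine gap in the connection step, precisely in the case $[n\alpha_{j}]>[n\alpha_{k}]$, and it matters because the proposition is asserted for varieties of arbitrary dimension. For the $1$-form entries $m_{jk}$ of the connection matrix you invoke only the vanishing of the residue $R_{jk}=\mathrm{Res}_{D^{s}}(m_{jk})$ and claim ``the same estimate as above.'' But in dimension $\ge 2$, vanishing of the residue of a logarithmic $1$-form along $D^{s}$ does \emph{not} imply that the form vanishes along $D^{s}$, hence does not give the order-$n^{s,t}$ vanishing of $f^{*}(m_{jk})$ needed to dominate the conjugation pole. Concretely, take $Y=\mathbb{A}^{2}$ with coordinates $(x,z)$, $D=\{x=0\}$, the double cover $x=y^{2}$, weights $(0,\tfrac12)$, and $m_{21}=\dif z$: the residue vanishes, yet the corresponding entry of the pulled-back matrix is $f^{*}(\dif z)\,y^{[2\cdot 0]-[2\cdot\frac12]}=\dif z/y$, a genuine (non-logarithmic) pole, so the output fails to be a logarithmic $\lambda$-connection at all and no residue bookkeeping can detect this. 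Since your argument uses only hypotheses that this (non-)example satisfies, the argument as written proves a false statement; the point is that the hypothesis actually available from Definition~\ref{def of par lam connection} is stronger: because $\nabla$ preserves the filtration $V_{\bm{\alpha}}$, the entries with $\alpha_{j}>\alpha_{k}$ satisfy the divisibility $m_{jk}\in s\,\Omega^{1}_{Y}(\log D)$, not merely $\mathrm{Res}_{D^{s}}(m_{jk})=0$. With that input, $f^{*}(m_{jk})\in y^{n^{s,t}}\,\Omega^{1}_{X}(\log B)$, which dominates the pole of order at most $n^{s,t}-1$ and places the new entries in $y\,\Omega^{1}_{X}(\log B)$, so the output genuinely preserves the new filtration. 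The same caution applies to your appeal to Theorem~\ref{thm of equiv between adjusted par and nil conn with par bas}: the paper's ``admissible'' is a residue condition, strictly weaker than filtration-preservation once $\dim \ge 2$, so the reduction should be run with the divisibility formulation. On curves, where the two conditions coincide, your proof is complete as written.
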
	

We provide a significant instance of a parabolic bundle pullback, extending a well-known classical result related to the Frobenius pullback.
\begin{proposition}\label{prop of Fro of par bd}
        Suppose $\mathrm{char}(k)=p$, and $H_{*}$ is a parabolic bundle on $(X,D)$, which is defined over $k$.
        The Frobenius pullback $F^{*}_{\para}$ is endowed with a canonical parabolic connection $\nabla_{\mathrm{can}}$. 		
\end{proposition}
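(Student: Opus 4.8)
The plan is to mirror the classical construction of the canonical connection on a Frobenius pullback (Katz), and then to show that in the parabolic setting it automatically respects the parabolic structure because $p=0$ in $k$. Throughout, write $F$ for the (relative) Frobenius, which is finite flat with ramification index $p$ along each component $D^{i}$; thus in the notation of Proposition~\ref{prop of pullback of bd with p} and formula~\eqref{parabolic data of pullback} we have $\bm{n}=p$, and a local defining section $s_{i}$ of $D\cap U_{i}$ satisfies $F^{*}s_{i}=s_{i}^{p}$ up to a unit. First I would invoke the bundle part of the pullback construction (Definition~\ref{def of global vb with pb} together with Proposition~\ref{prop of pullback of bd with p}) to obtain the parabolic bundle $F^{*}_{\para}H_{*}$: on each $U_{i}$ its parabolic weights are the fractional parts $\{p\alpha_{j}\}$ and its parabolic basis is $\{\,F^{*}e_{i}^{j}/s_{i}^{p\alpha_{j}}\,\}_{j}$, where $F^{*}e_{i}^{j}/s_{i}^{[p\alpha_{j}]}$ trivialises the underlying bundle $(F^{*}_{\para}H_{*})_{0}$.

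Next I would define $\nabla_{\mathrm{can}}$ exactly as in the non-parabolic case. The underlying sheaf $(F^{*}_{\para}H_{*})_{0}=\mathcal{O}_{X}\otimes_{F^{-1}\mathcal{O}_{X}}F^{-1}H_{0}$ carries the canonical flat ($\lambda=1$) connection $\nabla_{\mathrm{can}}=\dif\otimes\mathrm{id}$, determined by $\nabla_{\mathrm{can}}(g\cdot F^{*}v)=\dif g\otimes F^{*}v$ for $g\in\mathcal{O}_{X}$ and $v$ a local section of $H_{0}$; equivalently $\nabla_{\mathrm{can}}(F^{*}e_{i}^{j})=0$. This construction is intrinsic and glues globally, so the only remaining point is its compatibility with the parabolic structure.

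The heart of the argument is the residue computation in the parabolic basis. By the Leibniz rule,
\begin{equation*}
\nabla_{\mathrm{can}}\!\left(\frac{F^{*}e_{i}^{j}}{s_{i}^{p\alpha_{j}}}\right)
=\dif\!\left(s_{i}^{-p\alpha_{j}}\right)\otimes F^{*}e_{i}^{j}
=-p\alpha_{j}\,\dif\log s_{i}\otimes\frac{F^{*}e_{i}^{j}}{s_{i}^{p\alpha_{j}}}.
\end{equation*}
Since the denominator of $\alpha_{j}$ is coprime to $p$, the weight $\alpha_{j}$ has a well-defined image in $k$, and because $\Char(k)=p$ we get $p\alpha_{j}=0$ in $k$; equivalently $s_{i}^{-p\alpha_{j}}=(s_{i}^{-\alpha_{j}})^{p}$ is a formal $p$-th power and is therefore annihilated by $\dif$. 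Hence the connection matrix of $\nabla_{\mathrm{can}}$ with respect to the parabolic basis vanishes identically. In particular $\nabla_{\mathrm{can}}$ is admissible along every $D^{i}$ in the sense of Definition~\ref{def of connection with par bases}, its parabolic residue is zero, and so $(F^{*}_{\para}H_{*},\nabla_{\mathrm{can}})$ is a strong parabolic $\lambda$-connection with parabolic bases (Definition~\ref{def of nilpotent residue par conn}). Applying Theorem~\ref{thm of equiv between adjusted par and nil conn with par bas} transports this to an honest strong parabolic connection on the parabolic bundle $F^{*}_{\para}H_{*}$, which is the asserted $\nabla_{\mathrm{can}}$.

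Two routine points remain, and they are where the bookkeeping lives rather than any genuine difficulty. For $h>1$ one repeats the computation component by component, replacing $s_{i}^{p\alpha_{j}}$ by the multi-index monomial $\bm{s}_{i}^{p\bm{\alpha}_{j}}=\prod_{t\in T_{i}}(s_{i}^{t})^{p\alpha_{j}^{t}}$; each factor is again a formal $p$-th power, so $\dif$ kills the whole product and every parabolic residue vanishes. One should also record that $\nabla_{\mathrm{can}}$ is flat with vanishing $p$-curvature, which is immediate from $\nabla_{\mathrm{can}}=\dif\otimes\mathrm{id}$ and is exactly what later makes it the input to parabolic Cartier descent. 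The genuinely essential step is the vanishing $p\alpha_{j}=0$ in $k$: this characteristic-$p$ phenomenon is precisely what forces the fractional twist to be horizontal, and it is the only place the hypotheses $\Char(k)=p$ and ``denominator coprime to $p$'' are used.
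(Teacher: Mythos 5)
Your proof is correct and follows essentially the same route as the paper: a local Leibniz-rule computation on the parabolic bases whose entire content is that $p\alpha_{j}$ vanishes in $k$ (equivalently, $s^{p\alpha_{j}}=(s^{\alpha_{j}})^{p}$ is killed by $\dif$). The only cosmetic difference is the direction of the argument: the paper defines $\nabla_{\mathrm{can}}$ by declaring the parabolic bases $F^{*}e^{j}/s^{p\alpha_{j}}$ horizontal and then computes the residue $\{p\alpha_{j}\}$ on the generators $F^{*}e^{j}/s^{[p\alpha_{j}]}$ of the underlying bundle, whereas you declare $F^{*}e^{j}$ horizontal and deduce that the parabolic bases are horizontal; since $-[p\alpha_{j}]$ and $\{p\alpha_{j}\}$ have the same image in $k$, these are the same connection. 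One slip worth correcting: $(F^{*}_{\para}H_{*})_{0}$ is \emph{not} $\mathcal{O}_{X}\otimes_{F^{-1}\mathcal{O}_{X}}F^{-1}H_{0}$ in general, but locally $\bigoplus_{j}F^{*}(L^{j}_{0})\bigl([p\alpha_{j}]D\bigr)$, so the two sheaves differ whenever some $\alpha_{j}\geq 1/p$; your argument survives because the rule $\nabla_{\mathrm{can}}(F^{*}e^{j})=0$ extends uniquely to a logarithmic connection on this twist, which is exactly what your parabolic-basis computation establishes.
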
	
\begin{proof}
   We only need to verify the construction locally. Let \( \{ e_1 / s^{\alpha_1}, \ldots, e_r / s^{\alpha_r} \} \) denote the parabolic bases. Applying the parabolic pullback, we obtain the following parabolic data:
    \[
    \left( \{ F^* e_1 / s^{[p \alpha_1]}, \ldots, F^* e_r / s^{[p \alpha_r]} \}; \{ \{ p \alpha_1 \}, \ldots, \{ p \alpha_r \} \} \right),
    \]
    where \( [p \alpha_i] \) and \( \{ p \alpha_i \} \) denote the integer and fractional parts of \( p \alpha_i \), respectively.
    A natural connection \( \nabla \) is induced on the parabolic bases. For \( f \in \mathcal{O}_X \), it acts as:
    \[
    \nabla(f \otimes F^* e_i / s^{p \alpha_i}) = \dif f \otimes F^* e_i / s^{p \alpha_i}.
    \]
    The connection \( \nabla_0 \) acts on the bases \( \{ F^* e_1 / s^{[p \alpha_1]}, \ldots, F^* e_r / s^{[p \alpha_r]} \} \) as follows:
    \[
    \nabla(F^* e_1 / s^{[p \alpha_1]}) = \nabla\left( s^{\{ p \alpha_1 \}} \cdot F^* e_1 / s^{p \alpha_1} \right) = \{ p \alpha_1 \} \frac{\dif s}{s} \otimes F^* e_1 / s^{[p \alpha_1]}.
    \]
    By definition, this connection is exactly the adjusted connection.
\end{proof}
    
    \subsubsection{Pushforward of parabolic \texorpdfstring{{\boldmath$\lambda$}}{lambda}-connection}\label{def of pushforward of par}     
    We construct this pushforward for finite branched coverings using parabolic bases.
    A morphism $f : X \to Y$ of normal varieties is termed a branched covering if it is a finite and surjective morphism, as outlined in \cite{Ulud07Orbifoldanduniformization} and \cite{Mak87brachedcoveandalgfunc}.     
    Our attention is directed towards the situation in which $X$ and $Y$ are smooth varieties. 
    Additionally, consider $G$ as the automorphism group associated with $f$. 
    We presume that the ramification divisor $D=\sum D_{\ell}$ and its image $B=f(D)$ exhibit normal crossings.
    We denote the above data concisely as $f:(X,D) \to (Y,B)$.
    For any $p \in X(\Bar{k})$, let $q=f(p)$.
    Take \'etale coordinates $(x_{1},\cdots,x_{d})$ at $p$ and $(y_{1},\cdots,y_{d})$ at $q$ such that the divisor $D'$ and $B'$ are defined by equations $x_{i}=0$ and $y_{j}=0$ for some $i$ and $j$, respectively.
    By the definition of branched covering, $f^{\#}_{p}: \mathcal{O}_{Y,q} \to \mathcal{O}_{X,p}$ sends $y_{i}$ to $x_{i}^{n_{i}}$ for some positive integer $n_{i}$.
   
    Let $(E_{*},\nabla)$ be a parabolic $\lambda$-connection on $(X,D+D')$. 
    Without loss of generality, we may assume that $D$ is $G$-equivariant and contains the ramification divisor.
    Since we can take the composition 
    \begin{equation*}
    \nabla':H_{*} \stackrel{\nabla}{\to} H_{*}\otimes \Omega^{1}_{X}(\log (D+D')) \hookrightarrow H_{*} \otimes \Omega_{X}^{1}(\log (G(D)+D')),
    \end{equation*}
    where $G(D)$ is the $G$ saturation of $D$. 
    We only have to replace $(H_{*},\nabla)$ with $(H_{*},\nabla')$.    
    The essential step to construct the pushforward functor is to perform a local construction, generalizing Example \ref{example: toy example of pushforward}.    
    Let $U$ (resp. $V$) be an open affine scheme of $p$(resp. $q$), such that $f(U)=V$ and $H_{*}$ are direct sums of the free parabolic line bundles.
    Let $H_{*} \cong \oplus_{j} L_{*}^{j}$ be a direct sum of parabolic line bundles on $U$, such that each $L_{*}^{j}$ is endowed with a canonical bases $ e^{j} $, then $(f|_{U})_{*}(L_{*}^{j})$ is a bundle with bases
    \begin{equation*}
        \{e^{j} x_{1}^{t_{1}}\cdots x_{d}^{t^{d}} \}_{0\leq t_{1} \leq n_{1},\cdots ,0\leq t_{d} \leq n_{d}}.
    \end{equation*}
    Suppose the parabolic bases of $e^{j}$ are $e^{j}/(x_{1}^{\alpha_{1}^{j}}\cdots x_{n}^{\alpha_{n}^{j}})$, we set the parabolic bases for $e^{j} x_{1}^{t_{1}}\cdots x_{d}^{t^{d}}$ as
    \begin{equation*}
        \{(e^{j} x_{1}^{t_{1}}\cdots x_{d}^{t^{d}}/(y_{1}^{\alpha^{j}_{1}/n_{1}+t_{1}/n_{1}}\cdots y_{n}^{\alpha^{j}_{n}/n_{d}+t_{d}/n_{d}})\}_{j;\, t_{1},\cdots , t_{d}}.
    \end{equation*}
    The corresponding parabolic datum is 
    \begin{equation*}
        (e^{j} x_{1}^{t_{1}}\cdots x_{d}^{t_{d}};\{\frac{\alpha^{j}_{1}+t_{1}}{n_{1}},\cdots,\frac{\alpha^{j}_{d}+t_{d}}{n_{d}}\} )_{j;\, t_{1},\cdots , t_{d}}.
    \end{equation*}

    It is easy to check that these local parabolic bundles are glued to a global parabolic bundle, denoted by $f_{\para,*}H_{*}$ as before.
    The connection, denoted as $f_{\para,*}\nabla$, is naturally induced on $f_{\para,*}H_{*}$.
    Suppose 
    \begin{equation*}
    \nabla_{x_{i}\frac{\partial}{\partial x_{i}}}\{ \bm{e}_{i}/\bm{x}_{i}^{\bm{\alpha}} \}=\bm{m}_{i}\{ \bm{e}_{i}/\bm{x}_{i}^{\bm{\alpha}} \}.
    \end{equation*}
    The connection matrix of 
    \begin{equation*}
    f_{\para,*}\nabla_{y_{i}\frac{\partial}{\partial y_{i}}}\{ \bm{e}_{i}x^{t_{i}}/\bm{y}_{i}^{(\bm{\alpha}+t_{i})/n} \}=\frac{1}{n}\bm{m}_{i}\{ \bm{e}_{i}x^{t_{i}}/\bm{y}_{i}^{(\bm{\alpha}+t_{i})/n} \}
    \end{equation*}
    in terms of the parabolic bases.
    In summary, the connection matrix of the parabolic bases is derived by multiplying $1/n$.
    The connection for $(f_{\para,*}H_{*})_{0}$ is thus determined from the pattern in the last section.
    \begin{example}
        Let the notations be as above and let $d=1$, $n_{1}=2$, and $\mathrm{rank}{(H_{*})}=1$, with parabolic weight $\alpha$.
        Suppose that the connection on $U$ is
        \begin{equation*}
            \nabla_{*}(e/x^{\alpha})=m e/x^{\alpha} \dif \log x,
        \end{equation*}
        where $m \in \mathcal{O}_{U}$.
        Then the connection acts on the parabolic bases on $Y$ as follows: 
        \begin{equation*}
            f_{*}\nabla_{*}
            \begin{Bmatrix}
                \frac{e}{y^{\alpha/2}} \\
                \frac{ex}{y^{(\alpha+1)/2}} \}
            \end{Bmatrix} = \begin{Bmatrix}
                \frac{m}{2} & 0\\
                0 & \frac{m}{2}
            \end{Bmatrix}\begin{Bmatrix}
                \frac{e}{y^{\alpha/2}} \\
                \frac{ex}{y^{(\alpha+1)/2}} \}
            \end{Bmatrix}\dif \log y.
        \end{equation*}
    \end{example}
    \begin{remark}
    Our definition aligns with \cite{BIS19ondirectimageofparvbandparcon} and \cite{AlBis23pullandpshofparbd} when $f$ is a morphism between Riemann surfaces, as verified through local calculations and the comparison of the resulting parabolic weights.      
    \end{remark}
    
    So far, we have defined the pullback (for flat morphism) and pushforward (for finite morphism) of parabolic connections. 
We have the following proposition, which generalizes the BIS correspondence \cite[Theorem 5.8]{KrisSheng20perideRhamovercur}.
    \begin{theorem}\label{thm of Bis for vb with pb}
        If $f: (X,D) \to (Y,B)$ is a finite branched Galois covering between smooth varieties over $k$, such that the ramification indices are all non-zero in $k$.
        Then we have the following equivalence between two categories:
        \begin{equation*}
        \begin{tikzcd}
        \left\{ \vcenter{\hbox{ $  G$-equivariant parabolic}\hbox{\qquad $\lambda$-connections}}  \right\} \Bigm/(X,D)  & \{\textit{parabolic }\lambda\textit{-connections}\}/(Y,B)
	\arrow["{(f_{\para,*})^{G}}"{pos=0.6}, shift left, harpoon, from=1-1, to=1-2]
	\arrow["{f^{*}_{\mathrm{par}}}", shift left, harpoon, from=1-2, to=1-1]
        \end{tikzcd}.
        \end{equation*}
        Moreover, the $G$-equivariant adjusted (strong) parabolic $\lambda$-connections on $(X,D)$ correspond to the adjusted (strong) parabolic $\lambda$-connections on $(Y,B)$ under the equivalence.
    \end{theorem}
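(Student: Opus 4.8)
The plan is to reduce everything to an \'etale-local computation in the parabolic-bases framework and then verify that $f^{*}_{\para}$ and $(f_{\para,*})^{G}$ are mutually quasi-inverse by exhibiting unit and counit isomorphisms, deferring the $\lambda$-connection and the adjusted/strong conditions to a final residue bookkeeping. First I would check the two functors are well-defined between the stated categories. Since $f$ is Galois with group $G=\Aut(f)$ and $f\circ g=f$ for every $g\in G$, the canonical isomorphisms $g^{*}f^{*}_{\para}\cong f^{*}_{\para}$ equip $f^{*}_{\para}(H_{*},\nabla)$ with a $G$-equivariant structure satisfying the cocycle condition, so $f^{*}_{\para}$ lands in $G$-equivariant parabolic $\lambda$-connections. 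Conversely, because the ramification indices are invertible in $k$, the representation theory of $G$ is semisimple and the Reynolds operator (averaging over $G$) is available; applied to $f_{\para,*}$ of a $G$-equivariant object it splits off the invariant part, and since $G$ acts by connection-automorphisms the operator commutes with $\nabla$, so $(f_{\para,*})^{G}$ produces a genuine parabolic $\lambda$-connection on $(Y,B)$.

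Next I would prove the two composites are naturally isomorphic to the identity, working in the standard local model $X=\Spec k[x]\to Y=\Spec k[y]$, $y=x^{n}$, $G=\mu_{n}$; this reduction is legitimate because both functors are defined by pulling back and pushing forward parabolic bases, hence are compatible with \'etale localization, and the model captures the ramification. For the counit $(f_{\para,*})^{G}f^{*}_{\para}\Rightarrow\mathrm{Id}$ I would establish a parabolic projection formula $f_{\para,*}(f^{*}_{\para}M_{*}\otimes_{\para}N_{*})\cong M_{*}\otimes_{\para}f_{\para,*}N_{*}$ by a direct base-change computation on parabolic bases, then take $N_{*}=\mathcal{O}_{X}$. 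Using the toy example, $f_{\para,*}\mathcal{O}_{X}$ has parabolic basis $\{1,x,\ldots,x^{n-1}\}$ with weights $\{0,1/n,\ldots,(n-1)/n\}$ and $G$ acting through the $x^{t}$-grading, so $(f_{\para,*}\mathcal{O}_{X})^{G}=\mathcal{O}_{Y}$ with trivial parabolic structure; hence $(f_{\para,*})^{G}f^{*}_{\para}M_{*}\cong M_{*}\otimes_{\para}\mathcal{O}_{Y}\cong M_{*}$, and the same factor carries the canonical connection so the $\nabla$-structure is preserved. For the unit $\mathrm{Id}\Rightarrow f^{*}_{\para}(f_{\para,*})^{G}$ I would invoke Galois descent: away from ramification $f$ is a $G$-torsor and the statement is classical \'etale descent, while at a ramification point I would run the local dictionary directly. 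A $G$-equivariant weight-$\alpha$ line bundle whose fibre carries the character $\zeta\mapsto\zeta^{m}$ pushes forward with invariant summand $\eta x^{t_{0}}$, where $t_{0}\equiv m\pmod{n}$, of weight $(\alpha+t_{0})/n$, and pulling this back returns weight $\{\alpha+t_{0}\}=\alpha$ with basis $f^{*}(\eta x^{t_{0}})/x^{t_{0}}=\eta$, reconstructing both the parabolic weight and the equivariant character. Semisimplicity of $G$-representations (again using $n$ invertible) reduces the general rank to this line-bundle case.

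Finally I would treat the adjusted/strong statement by residue bookkeeping on parabolic bases: under $f^{*}_{\para}$ the identity $\dif\log y=n\,\dif\log x$ multiplies the parabolic residue by the ramification index $n$, and under $f_{\para,*}$ it divides the residue by $n$. Since $n$ is invertible in $k$, a nilpotent residue stays nilpotent and a vanishing residue stays vanishing, so adjusted corresponds to adjusted and strong to strong, in both directions and compatibly with passage to $G$-invariants. It then remains to check that the unit and counit isomorphisms are natural and satisfy the triangle identities, which is automatic once they are pinned down on the local model.

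The main obstacle I expect is the unit isomorphism over the ramification divisor: away from the branch locus everything is ordinary Galois descent, but over the ramification the action is not free, so the correspondence between the $G$-equivariant structure on $(X,D)$ (the fibrewise character $m$) and the parabolic weight $(\alpha+t_{0})/n$ on $(Y,B)$ must be pinned down exactly, including verifying that the \emph{canonical} $G$-structure on $f^{*}_{\para}$ of the descended object agrees with the original one, not merely up to an integer shift. The parabolic-bases formalism is precisely what makes this tractable, converting the descent into the explicit weight identity $\{n\beta\}=\alpha$ with integer shift $[n\beta]=t_{0}$.
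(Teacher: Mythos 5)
Your proposal follows essentially the same route as the paper: your well-definedness step is the paper's Lemma \ref{lem of functoriality of G structure}, your parabolic projection formula (specialized to $N_{*}=\mathcal{O}_{X}$, with $(f_{\para,*}\mathcal{O}_{X})^{G}\cong\mathcal{O}_{Y}$ computed from the toy example) is exactly how the paper proves $(f_{\para,*})^{G}f_{\para}^{*}\cong\mathrm{Id}$ via Lemma \ref{projection formula}, your local dictionary at a ramification point (invariant summand $\eta x^{t_{0}}$ of weight $(\alpha+t_{0})/n$ pulling back to weight $\alpha$) is the paper's line-bundle computation for the other composite, and the residue bookkeeping for the adjusted/strong statement matches the paper's closing remark.

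There is, however, one step in your write-up that is genuinely wrong as stated: the claim that ``because the ramification indices are invertible in $k$, the representation theory of $G$ is semisimple and the Reynolds operator (averaging over $G$) is available.'' The hypothesis controls only the \emph{inertia} groups, not $|G|$. For example, an \'etale Galois cover of degree $p$ (say an isogeny of elliptic curves in characteristic $p$) has all ramification indices equal to $1$, yet $|G|=p$ is not invertible and no Reynolds operator exists; your justification that $(f_{\para,*})^{G}$ lands in parabolic $\lambda$-connections therefore fails for such covers, which the theorem's hypotheses allow. The gap is localized and reparable: preservation of the connection by invariants needs no averaging (a $G$-equivariant $\nabla$ tautologically preserves $G$-invariant sections), local freeness of the invariants away from the branch locus is classical \'etale descent (no invertibility needed), and at a ramification point only the inertia group $\mu_{n}$ acts, whose order $n$ \emph{is} invertible --- which is precisely how the paper argues, by directly exhibiting the parabolic basis $\{ex^{k}/y^{(\alpha+k)/n}\}$ of the invariants rather than invoking any global splitting. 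The same caveat applies to your later appeal to ``semisimplicity of $G$-representations'' to reduce to the line-bundle case: that reduction should instead come from the locally abelian structure of the parabolic bundle together with the inertia action, as in the paper's local model.
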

    
    Before we give proof, we present two lemmas.

    \begin{lemma}\label{lem of functoriality of G structure}
        Let $(V,\nabla)$ be a $\lambda$-connection on $(Y,B)$, then $f_{\para}^{*}(V,\nabla)$ is a $G$-equivariant $\lambda$-connection.
        Conversely, let $(H,\nabla')$ be a $G$-equivariant connection on $(X,D)$, then $G$ acts on the bundle $f_{\para,*}(H,\nabla')$. 
    \end{lemma}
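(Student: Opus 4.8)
The plan is to deduce both assertions from the functoriality of the parabolic pullback and pushforward constructed in Proposition~\ref{prop of pullback of bd with p} and Section~\ref{def of pushforward of par}, combined with the defining relation $f \circ g = f$ for every $g \in G$. The guiding principle is that both operations are built \emph{locally from parabolic bases}, so a $G$-action on the underlying bundle will automatically respect the parabolic and connection data, provided one checks that $G$ permutes the relevant local defining sections compatibly with the ramification data.

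For the pullback, let $g \in G$, so that $g$ is an automorphism of $X$ with $f \circ g = f$. The universal property of pullback supplies a canonical isomorphism $\lambda_g \colon g^{*} f^{*}_{\para}(V,\nabla) \xrightarrow{\sim} f^{*}_{\para}(V,\nabla)$, and the family $\{\lambda_g\}_{g \in G}$ satisfies the cocycle condition $\lambda_{gh} = \lambda_g \circ g^{*}\lambda_h$ because it is assembled from the identities $f \circ g = f$. It then remains to check that each $\lambda_g$ respects the weight filtration and intertwines the connections. For the parabolic part, recall from \eqref{parabolic data of pullback} that the parabolic bases of $f^{*}_{\para}V$ have the form $f^{*}\bm{e}_i / \bm{y}_{i,j}^{[\bm{n}\bm{\alpha}]}$, where $\bm{e}_i$ is a local basis of $V$ downstairs and the $\bm{y}_{i,j}$ are defining sections of the components of $D$ upstairs; since $g$ fixes $f$, it carries such a basis to the corresponding parabolic basis over $g(V_{i,j})$ with identical weights $\{\bm{n}\bm{\alpha}\}$, so $\lambda_g$ preserves the filtration. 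For the connection, the pullback $f^{*}_{\para}\nabla$ is defined by the intrinsic base-change formula, so $\lambda_g$ is automatically horizontal, and $f^{*}_{\para}(V,\nabla)$ is $G$-equivariant.

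For the pushforward, a $G$-equivariant structure on $(H,\nabla')$ is a family of isomorphisms $\psi_g \colon g^{*}H \to H$ compatible with $\nabla'$ and satisfying the cocycle condition. For any open $V \subseteq Y$ the preimage $f^{-1}(V)$ is $G$-stable (recall $D$ is taken $G$-equivariant), so $\psi_g$ induces an action of $G$ on $(f_{*}H)(V) = H(f^{-1}(V))$, and these glue to a $G$-action on $f_{\para,*}H$. To see the parabolic structure is preserved, I would work in the local model of Section~\ref{def of pushforward of par}: over $U$ one has $H_{*} \cong \bigoplus_j L^{j}_{*}$ with basis $e^j$, and the pushforward basis $\{e^{j}x_1^{t_1}\cdots x_d^{t_d}\}$ carries weights $\{(\alpha^j_\ell + t_\ell)/n_\ell\}$; since $\psi_g$ is $G$-equivariant and $g$ permutes the local uniformizers compatibly with the ramification data, the induced action permutes these parabolic bases within a fixed weight, so the filtration is respected. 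Finally, $f_{\para,*}\nabla'$ is obtained from $\nabla'$ by the $1/n$-rescaling of the connection matrix, which is $G$-equivariant because $\nabla'$ is; hence $G$ acts on $f_{\para,*}(H,\nabla')$.

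The step I expect to be the main obstacle is verifying that the $G$-action respects the parabolic \emph{weights} in both directions, since $G$ may permute the irreducible components of $D$ lying over a fixed branch component $B^s$. One must check that all components in a single $G$-orbit receive the same weight; this holds precisely because, for a Galois covering, $G$ acts transitively on the components over $B^s$, and these weights all descend from the single weight of $V$ along $B^s$ (respectively are pushed forward from $G$-compatible weights upstairs). Once this transitivity-and-equal-weights bookkeeping is settled, $\lambda_g$ and $\psi_g$ send each weight-$t$ piece isomorphically to another weight-$t$ piece, and the remaining verifications reduce to the routine local computations already carried out in the constructions of $f^{*}_{\para}$ and $f_{\para,*}$.
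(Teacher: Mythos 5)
Your proposal is correct and follows essentially the same route as the paper: the $G$-action is the canonical one induced by $f \circ g = f$ (trivial equivariant structure on the pulled-back basis elements $f^{*}e_{i}$, natural action on the monomials $e_{i}x^{j}$ in the pushforward), verified by local computations on parabolic bases. The paper's proof is terser---it reduces at once to the free, $h=1$, totally ramified local model $x^{n}=y$---whereas you additionally spell out the cocycle conditions and the equal-weights-on-$G$-orbits bookkeeping that the paper's local model sidesteps; this is harmless extra rigor rather than a different method.
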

    \begin{proof}
        Without loss of generality, we can take $H$ to be free and set $h=1$, where the parabolic data is given by 
        \begin{equation*}
            \{ e_{1},\ldots,e_{r} ; \alpha_{1},\ldots,\alpha_{r} \}.
        \end{equation*}
        Pick sections $x$ and $y$ on $X$ and $Y$ satisfying $x_{0}=D$ and $y_{0}=B$ respectively, and further assume $x^{n}=y$.
        Then the parabolic bases of $(H,\nabla)$ is
        \begin{equation*}
            \{ e_{1}/y^{\alpha_{1}},\ldots,e_{r}/y^{\alpha_{r}} \},
        \end{equation*}
        and the parabolic bases of $\pi_{\para}^{*}(H,\nabla)$ is
        \begin{equation*}
            \{ f^{*}e_{1}/x^{n\alpha_{1}},\ldots,f^{*}e_{r}/x^{n\alpha_{r}} \},
        \end{equation*}
        the parabolic data is
        \begin{equation*}
            \{ f^{*}e_{1}/x^{[n\alpha_{1}]}, \ldots,f^{*}e_{r}/x^{[n\alpha_{r}]};\{ n\alpha_{1}\},\ldots,\{n\alpha_{r}\} \}.
        \end{equation*}
        The bases $x^{[n \alpha_{i}]}$ naturally possesses a $G$-equivariant structure, which similarly applies to the parabolic bases when a trivial $G$-equivariant structure is imposed on $f^{*}e_{i}$.
        The connection can be naturally induced.
        
        On the contrary, let $(H,\nabla)$ be a $G$-equivariant parabolic connection on $(X,D)$, suppose the parabolic bases of $(H,\nabla)$ is the form
        \begin{equation*}
            \{ e_{1}, \ldots, e_{r};\alpha_{1},\cdots,\alpha_{r} \},
        \end{equation*}
        and each $e_{i}$ is $G$-equivariant.
        By definition, the parabolic bases are
        \begin{equation*}
            \{ e_{i}/y^{\alpha_{i}/n}, e_{i}x/y^{\alpha_{i}/n+1/n}, \ldots, e_{i}x^{n-1}/y^{\alpha_{i}/n+(n-1)/n}\}_{i},
        \end{equation*}
        note that $G$ canonically acts on each $e_{i}x^{j}$, hence $G$ acts on $\pi_{\para,*}(H,\nabla)$. 
    \end{proof}
    
    \begin{lemma}[Projection Formula]\label{projection formula}
        $f_{\para,*} f_{\para}^{*}(H,\nabla') = (H,\nabla') \otimes f_{\para,*}(\mathcal{O}_{X},\lambda \dif)$, where $(\mathcal{O}_{X},\lambda \dif)$ is the canonical $\lambda$-connection on $(X,D)$.
    \end{lemma}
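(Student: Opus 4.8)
The plan is to verify the isomorphism locally in parabolic bases and then glue, exploiting that all three operations $f^{*}_{\para}$, $f_{\para,*}$ and $\otimes$ are additive and are defined basis-wise. Since tensor, pullback and pushforward all commute with direct sums of parabolic line bundles, I would first reduce to the case $h=1$ and (using the local splitting of the locally abelian bundle) to a single parabolic line bundle $H_{*}$ with parabolic basis $\eta = e/y^{\alpha}$ and connection $\nabla'\eta = (m\,\dif\log y)\otimes\eta$, working in the standard local model where sections $x$ on $X$ and $y$ on $Y$ satisfy $x^{n}=y$, so that $f^{*}\dif\log y = n\,\dif\log x$. On underlying bundles the desired identification is nothing but the classical projection formula $f_{*}f^{*}H\cong H\otimes f_{*}\mathcal{O}_{X}$, sending $f^{*}e\cdot x^{t}\mapsto e\otimes x^{t}$ for $0\le t<n$; the entire content is to check that this canonical isomorphism respects the parabolic bases and the $\lambda$-connection.

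First I would match the parabolic structures. Applying $f^{*}_{\para}$ to $\eta$ gives the parabolic basis $f^{*}e/x^{n\alpha}$ on $(X,D)$, and pushing this forward produces, by the construction of Section~\ref{def of pushforward of par}, the parabolic bases $\{\, f^{*}e\cdot x^{t}/y^{\alpha+t/n}\,\}_{0\le t<n}$ with weights $\alpha+t/n$. On the other side, $f_{\para,*}(\mathcal{O}_{X},\lambda\dif)$ carries the tautological parabolic bases $\{\,x^{t}/y^{t/n}\,\}_{0\le t<n}$ of weights $t/n$, exactly as in Example~\ref{example: toy example of pushforward}, and tensoring with $\eta$ yields, via \eqref{tensor bases}, the parabolic bases $\{\,(e\otimes x^{t})/y^{\alpha+t/n}\,\}_{0\le t<n}$ of weights $\alpha+t/n$. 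Thus the two sides have identical weight data, and the classical projection-formula identification $f^{*}e\cdot x^{t}\leftrightarrow e\otimes x^{t}$ carries parabolic bases to parabolic bases, hence is an isomorphism in $\mathcal{PVB}$.

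Next I would match the $\lambda$-connections, which is the technical heart. Pulling back multiplies the connection form by $n$ (because $f^{*}\dif\log y=n\,\dif\log x$), while the pushforward rescales by $1/n$; a direct computation using \eqref{fml of base change} shows these factors cancel, so $f_{\para,*}f^{*}_{\para}\nabla'$ acts on each $f^{*}e\cdot x^{t}/y^{\alpha+t/n}$ with connection coefficient $f^{*}m$, which descends to $m$ under the projection formula since $f^{*}m$ acts $\mathcal{O}_{Y}$-linearly as $m\cdot\mathrm{id}$ on $f_{*}\mathcal{O}_{X}$. On the other side I would compute that the canonical connection on $f_{\para,*}(\mathcal{O}_{X},\lambda\dif)$ is trivial in the tautological parabolic bases: the honest generator $x^{t}$ has connection coefficient $(\lambda t/n)\,\dif\log y$ after the $1/n$ rescaling, and this is exactly absorbed by the weight $t/n$, giving $\nabla(x^{t}/y^{t/n})=0$. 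Therefore the tensor-product connection on the right reduces, by the $\lambda$-Leibniz rule, to $(m\,\dif\log y)$ acting diagonally, matching the left-hand side term by term (the same argument applies verbatim with the matrix $M$ in place of $m$ in higher rank).

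Finally, since the projection-formula isomorphism is natural in $(H,\nabla')$, the local identifications are compatible with the transition data and glue to a global isomorphism of $\lambda$-connections with parabolic bases; the extension to $h>1$ follows by the additivity already used in the reduction. I expect the main obstacle to be the bookkeeping of the competing factors $n$ and $1/n$, together with the verification that the pushforward of the canonical connection $(\mathcal{O}_{X},\lambda\dif)$ is trivial in parabolic bases; once these are in place, compatibility of the identification with $\otimes$ is immediate from \eqref{tensor bases} and the Leibniz rule.
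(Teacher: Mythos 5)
Your proposal is correct and follows essentially the same route as the paper: the paper's proof is literally the one-line statement that the lemma follows from ``analyzing the parabolic bases on both sides,'' and your argument is precisely that analysis carried out in detail (matching the bases $f^{*}e\cdot x^{t}/y^{\alpha+t/n}$ with $(e\otimes x^{t})/y^{\alpha+t/n}$, checking that the factors $n$ and $1/n$ cancel, and observing that $f_{\para,*}(\mathcal{O}_{X},\lambda\dif)$ has zero connection matrix in its tautological parabolic bases). The only cosmetic point is that the weights $\alpha+t/n$ should be reduced to their fractional parts, which permutes the index $t$ between the two sides but does not affect the identification of the parabolic basis elements themselves.
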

    \begin{proof}
        The demonstration arises from analyzing the parabolic bases on both sides.
    \end{proof}
    Both Lemma \ref{lem of functoriality of G structure} and Lemma \ref{projection formula} are well-established in the context of $\lambda$-connections with trivial parabolic structure.  
    We now turn to the proof of Theorem \ref{thm of Bis for vb with pb}.
    
    \begin{proof}[Proof of Theorem \ref{thm of Bis for vb with pb}]
        By Lemma \ref{lem of functoriality of G structure}, the functors $\pi_{\para,*}$ and $\pi_{\para}^{*}$ are well-defined.
        We only have to prove two natural isomorphisms
        \begin{equation}\label{back then decent}
        (f_{\para,*})^{G} f_{\para}^{*} \cong \mathrm{Id} 
        \end{equation}
        and 
        \begin{equation}\label{decent then back}
        f_{\para}^{*} f_{\para,*} \cong \mathrm{Id}.
        \end{equation}
        By lemma \ref{projection formula}, to prove \eqref{back then decent} we only have to show $(f_{\para,*})^{G}(\mathcal{O}_{X},\lambda \dif) \cong (\mathcal{O}_{Y},\lambda \dif)$, where $(\mathcal{O}_{X},\lambda \dif)$ is endowed with trivial $G$-equivariant structure, which is obvious.

        To prove \eqref{decent then back}, we only have to show it in the case of line bundle, $e/t^{\alpha}$ be the $G$-equivariant parabolic bases; then the parabolic bases of $f_{\para,*}H_{*}$ is
        \begin{equation*}
            \{ \frac{e}{y^{\alpha/n}}, \cdots, \frac{ex^{n-1}}{y^{(\alpha+n-1)/n}} \}.
        \end{equation*}
        Since $e$ is endowed with a structure $G$-equivariant, there exists a unique natural number $k$ less than $n$ such that $ex^{k}$ is $G$-invariant.
        
        Therefore, the parabolic bases of $(f_{\para,*}(H_{*}))^{G}$ is given by $\frac{ex^{k}}{y^{(\alpha+k)/n}}$. 
        The pullback for the parabolic bases $\frac{ex^{k}}{y^{(\alpha+k)/n}}$ equals $\frac{ex^{k}}{x^{(\alpha+k)}}=\frac{e}{x^{\alpha}}$.
        The adjusted (strongly) condition is straightforward because the residue of a parabolic bases behaves functorially when subjected to pullback and pushforward operations.
    \end{proof}
        
    \begin{remark}
        As noted in Proposition \ref{prop of pullback of bd with p}, when restricted to $G$-equivariant $\lambda$-connections with a trivial parabolic structure on the left, this aligns with the Biswas-Iyer-Simpson correspondence.
    \end{remark}
    \begin{remark}
        The theorem is also established in \cite[Propositions 1.46 and 1.51]{LiuYangZuo23parabolibFFmoandparHiggsdeRhamflows} for the case of curves, based on their characterization of parabolic structures.
    \end{remark}

\section{Parabolic Cartier and Parabolic Inverse Cartier transformation}\label{Par Higgs-de Rham flow}
The foundational contributions to non-abelian Hodge theory in positive characteristic were made by Ogus and Vologodsky \cite{OV2007NonAbeHincharp}, and Schepler further built on their work by addressing the logarithmic case \cite{Sche08LognonabeHodincharp}. 
    Later, Lan, Sheng, and Zuo utilized exponential twist methods, as detailed in \cite{LSZ15NonabeHodexptwi} and the logarithmic case \cite[Appendix]{LSYZ19uniofpadiccurveviaflow}, to reestablish this correspondence. 
    In this paper, we employ the constructed parabolic bases to formulate the parabolic non-abelian Hodge correspondence in positive characteristic using the exponential twist technique.
    
    \subsection{Revisit of exponential twist}    
    
    \subsubsection{Inverse Cartier Transformation via Exponential Twist}
    Accordingly, we begin by providing a concise overview of the exponential twist construction.
    Let $(E,\theta)$ be a logarithmic Higgs bundle on $(X,D)$.
Choose an open cover $\{ U_{i} \}_{i\in I}$ for $X$ such that each $U_{i}$ has a Frobenius lifting $\tilde{F}_{i}$ that respects $U_{i} \cap D$, and $E|_{U_{i}}$ is a free module.
    We select bases $\{ e_{1}^{i}, \ldots, e_{r}^{i} \}$ for $E|_{U_{i}}$, and label the transition matrix on $U_{ij}$ from the bases $\{ e_{1}^{i}, \ldots, e_{r}^{i} \}$ to $\{ e_{1}^{j}, \ldots, e_{r}^{j} \}$ as $\phi_{ij}$.

Locally, on each $U'_{i}=F^{*}U_{i}$, we define the flat bundle $(H,\nabla)_{i}$ as $(F^{*}E|_{U_{i}},\nabla_{i})$, where $F^{*}$ is the Frobenius pullback and the associated connection on $F^{*}E|_{U_{i}}$ is defined by 
    \begin{equation*}
    \nabla_{i} = \nabla_{\mathrm{can},U_{i}} + \frac{\dif \tilde{F}^{*}_{i}}{p} F^{*}\theta_{i},
    \end{equation*}
    where $\nabla_{\mathrm{can},U'_{i}}$ is the canonical connection on $F^{*}E|_{U_{i}}$, and
    \begin{equation*}
        \frac{\dif \tilde{F}^{*}_{i}}{p}: F^{*}\Omega_{X}^{1}(\log D) \to \Omega_{X'}^{1}(\log D).
    \end{equation*}
    It is direct to check each $(H,\nabla)_{i}$ is a flat connection with nilpotence exponent $\leq p$.
    The exponential twist is a technique to glue these $(F^{*}E|_{U_{i}},\nabla_{i})$ into a global connection.
    Note that the morphism
    \begin{equation*}
        \frac{\tilde{F}_{i}^{*}-\tilde{F}_{j}^{*}}{p}: \mathcal{O}_{U_{ij}} \to \mathcal{O}_{U_{ij}},
    \end{equation*}  
    factors through the differential in the sense that there is a homomorphism 
    \begin{equation}\label{def of DI class}
        h_{ij}: F^{*}\Omega_{X}^{1}(\log D)|_{U_{ij}} \to \mathcal{O}_{X'}^{1}|_{U'_{ij}},
    \end{equation}
    on $U_{ij}$.
    The collection $\{h_{ij}\}_{i,j}$ gives rise to a C\v{e}ch class $\kappa$ in $H^{1}(X,F^{*}T_{X}(-\log D))$, known as the Deligne-Illusie class. 
    This class is independent of the chosen cover, but only dependent on the $W_{2}(k)$ lifting $(X,D) \hookrightarrow (\tilde{X},\tilde{D})$.
    
The key step is to define the transition matrix $\varphi_{ij}$ from $\{ F^{*}e_{\ell}^{i} \}_{\ell=1,\cdots,r}$ to $\{ F^{*}e_{\ell}^{j} \}_{\ell=1,\cdots,r}$ as
    \begin{equation*}
        \exp(h_{ij}F^{*}(\theta_{ij})) \circ F^{*}\phi_{ij}
    \end{equation*}
    on $U'_{ij}$.
    It can then be confirmed that these data satisfy the cocycle condition and are compatible with $\nabla_{i}$.
    Consequently, this yields a globally defined connection $(H, \nabla)$ on $(X', D')$. 
    It is then a standard procedure to verify that $(H, \nabla)$ is independent of the chosen cover.

    In summary, we conclude the construction $C^{-1}$ into the following diagram: 
    \begin{equation}\label{exp con of C^{-1}}
        \begin{tikzcd}
	& {(F^{*}E,\nabla_{\mathrm{can}},F^{*}\theta)} \\
	{(H,\nabla)} && {(E,\theta)}
	\arrow["{F^{*}}"'{pos=0.5}, from=2-3, to=1-2]
	\arrow["{\text{exp. twist}}"'{pos=0.5}, from=1-2, to=2-1]
        \end{tikzcd}
    \end{equation}
                
    \subsubsection{Cartier Transformation via Exponential Twist.}
    For a flat bundle $(H,\nabla)$ with $p$-curvature of nilpotent exponent less than $p-1$ on $(X',D')$.
    
    Let notations be as above, to initiate the construction of $C(H,\nabla)$, employ an exponential twist to construct a new flat bundle as described:
    Define a new connection on $H_{i}$ by the formula
    \begin{equation*}
        \nabla'_{i} = \nabla_{i} + \frac{\dif \tilde{F}^{*}_{i}}{p}(\psi_{i}),
    \end{equation*}
    it can be checked that $\psi_{i}$ is parallel with respect to $\nabla'_{i}$, and the $p$-curvature of $\nabla'_{i}$ vanishes.
    Then define the transition data as
    \begin{equation*}
        \exp(h_{ij}\psi) \circ F^{*}\varphi_{ij},
    \end{equation*}
    these data satisfy the cocycle condition and are compatible with $\psi_{i}$'s.            
    Hence we obtain a triple $(H',\nabla',\psi')$, satisfying the properties:
    \begin{itemize}
        \item $(H',\nabla')$ is a flat connection with vanishing $p$-curvature;
        \item $\psi': H' \to H' \otimes F^{*}\Omega_{X}^{1}(\log D)$ is an $\mathcal{O}_{X}$-linear morphism that arises naturally from the $p$-curvature $\psi(\nabla)$ and is parallel under $\nabla'$.
    \end{itemize}
    
    Then by applying the Cartier descent theorem, we get a vector bundle $E$ on $X$; moreover, it can be shown that the morphism $\psi$ also descends on a Higgs field on $E$.
    The main thing to be checked is that the triple $(H',\nabla',\psi')$, 
    As a consequence, applying the Cartier descent theorem, we obtain a vector bundle $E$ with a $\mathcal{O}_{X}$-morphism $\theta$, which serves as a Higgs field on $E$.
    The Cartier transformation is defined to be the composition of exponential twist and Cartier descent functors as above, as depicted in the following diagram:
    \begin{equation}\label{exp con of C}
        \begin{tikzcd}
	& {(H',\nabla',\psi')} \\
	{(H,\nabla)} && {(E,\theta)}
	\arrow["{\text{Cartier descent}}"{pos=0.5}, from=1-2, to=2-3]
	\arrow["{\text{exp. twist}}"{pos=0.5}, from=2-1, to=1-2]
        \end{tikzcd}
    \end{equation}
    
```

\subsection{Parabolic inverse Cartier and Cartier Transformation}
The core idea behind constructing the \textit{parabolic inverse Cartier transform} lies in systematically adapting the classical exponential twist framework to the parabolic setting: one takes parabolic bases and the parabolic Higgs field as foundational input data, applies the exponential twist mechanism, and rigorously verifies that the output intrinsically carries the structure of a parabolic connection.
This procedure follows rigorously in parallel to the original exponential twist framework.

```
    Prior to formalizing this construction, we adopt notation from \cite{KrisSheng20perideRhamovercur} and define:
    \begin{itemize}
        \item $\mathrm{HIG}_{lf,*}^{p-1}(X,D)$: Category of parabolic Higgs bundles on $(X,D)$ where:
        \begin{itemize}
            \item Higgs fields are nilpotent of degree $\leq p-1$;
            \item Residues along each $D^i$ are nilpotent of degree $\leq p-1$.
        \end{itemize}
        \item $\mathrm{MIC}_{lf,*}^{p-1}(X,D)$: Category of flat bundles on $(X,D)$ where:
        \begin{itemize}
            \item $p$-curvatures are nilpotent of degree $\leq p-1$;
            \item Residues along each $D^i$ are nilpotent of degree $\leq p-1$.
        \end{itemize}
    \end{itemize}
\subsubsection{Parabolic Inverse Cartier Transformation}
For a parabolic Higgs bundle \((E_*, \theta)\) and an open covering \(\{U_i\}\) of \(X\), there exist for each \(U_i\) parabolic bases \(\left\{ e_i^1/s_i^{\alpha_1}, \ldots, e_i^r/s_i^{\alpha_r} \right\}\) and a local Frobenius lift \(F_i\), and denote the Deligne-Illusie data by $h_{ij}$ on the intersections $U_{ij}$.

    Let $(E_{*},\theta)$ be an object in $\mathrm{HIG}_{lf,*}^{p-1}$.
Define a formal connection on $U_{i}$ as $(H_{*,i},\nabla_{i})$, where $H_{*,i}$ is formally generated by the bases 
    \begin{equation}\label{par bases of inverse Cartier}
    \{F^{*}(e_{i}^{j}/s_{i}^{\alpha_{j}}) \} = \{ F^{*}e_{i}^{j}/s_{i}^{p_{\alpha_{j}}}\},
    \end{equation}
    and the connection is defined to be 
\begin{equation}\label{local par connection}
\nabla_{\para,i} = \nabla_{\mathrm{can},i} + \frac{\dif \tilde{F}^{*}_{i}}{p} \circ F^{*}\theta_{\para,i},
\end{equation}
    where $\nabla_{\mathrm{can}}$ is defined by Proposition \ref{prop of Fro of par bd}.
Our first claim is that $(H_{*},\nabla)_{i}$ is endowed with a parabolic connection structure.
    The parabolic data are as follows.
    \begin{equation}\label{local par data of inversee Cartier}
        (\{F^{*}e_{i}^{1}/ s_{i}^{[p\alpha_{1}]},\cdots,F^{*}e_{i}^{r}/ s_{i}^{[p\alpha_{r}]}\},\{\{p\alpha_{1}\},\cdots,\{p\alpha_{r}\}\}),
    \end{equation}
    where $[*]$ is the Gaussian rounding function and $\{*\}$ is the decimal function. 
    The parabolic bases are \eqref{par bases of inverse Cartier}.
    The connection matrix for the parabolic bases \ref{par bases of inverse Cartier} is directly derived from the Frobenius pullback of the Higgs matrix associated with the parabolic bases $\left\lbrace e_{i}^{1}/s_{i}^{\alpha_{1}},\ldots,e_{i}^{r}/s_{i}^{\alpha_{r}}\right\rbrace $, composed with $\frac{\dif \tilde{F}^{*}_{i}}{p}$, thus fulfilling the adjusted condition.

    Then we glue these local parabolic connections $(H,\nabla)_{i}$ to a global connection $(H,\nabla)$.
    We define the transition matrix for the parabolic bases \eqref{par bases of inverse Cartier} on $U_{ij}$ as 
    \begin{equation}\label{glueing data for par bases}
        \exp(h_{ij}F^{*}\theta_{\para})F^{*}(\phi_{ij,\mathrm{par}}).
    \end{equation} 
    The essential verifications include: the flatness of $\nabla_{i}$, the cocycle condition of the transition data, and the well-defined nature of the global connection. 
    Given that our construction mirrors that of \cite{LSZ15NonabeHodexptwi}, the validation procedure is identical and thus will not be reiterated here.             
    We demonstrate that the derived formal connection is indeed an adjusted parabolic connection.
    \begin{proposition}\label{prop of par inverse Cartier}
        Let $(E_{*},\theta)$ be an object in the category $\mathrm{HIG}^{p-1}_{lf,*}$.
        Then $(H_{*},\nabla):=C^{-1}_{\exp}(E_{*},\theta)$ is an object in the category $\mathrm{MIC}_{lf,*}^{p-1}$.
\end{proposition}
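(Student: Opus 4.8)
The plan is to verify the two defining conditions of $\mathrm{MIC}_{lf,*}^{p-1}$ --- nilpotency of the residues and of the $p$-curvature, each of degree $\leq p-1$ --- since flatness, the cocycle condition for the transition data \eqref{glueing data for par bases}, and the well-definedness of the global connection $(H_*,\nabla)$ have already been reduced to the classical exponential-twist verification of \cite{LSZ15NonabeHodexptwi}. The guiding principle is that, when expressed in the parabolic bases \eqref{par bases of inverse Cartier}, the local model $(H_{*,i},\nabla_{\para,i})$ is formally identical to the non-parabolic exponential twist, so that the classical local computations apply verbatim; the only genuinely parabolic input is the behaviour of $\nabla_{\mathrm{can}}$ near $D$, supplied by Proposition \ref{prop of Fro of par bd}.

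For the residue condition I would argue locally. Relative to the parabolic bases $\{F^{*}e_i^{j}/s_i^{p\alpha_j}\}$, Proposition \ref{prop of Fro of par bd} shows that $\nabla_{\mathrm{can}}$ acts as the trivial connection $\dif$, so its parabolic residue along each $D^{i}$ vanishes; in particular $\nabla_{\mathrm{can}}$ is strong parabolic. Consequently the parabolic residue of $\nabla_{\para,i}=\nabla_{\mathrm{can},i}+\tfrac{\dif\tilde F^{*}_i}{p}\circ F^{*}\theta_{\para,i}$ is contributed entirely by the twist term. Since $\tfrac{\dif\tilde F^{*}_i}{p}$ sends the residue-one form $F^{*}(\dif\log s_i)$ to a logarithmic form of residue one, taking residues yields
\begin{equation*}
\mathrm{Res}_{D^{i}}(\nabla_{\para,i}) = F^{*}\big(\mathrm{Res}_{D^{i}}\theta_{\para}\big).
\end{equation*}
By hypothesis $\mathrm{Res}_{D^{i}}\theta_{\para}$ is nilpotent of degree $\leq p-1$, and Frobenius pullback preserves the nilpotency degree because $(F^{*}N)^{k}=F^{*}(N^{k})$; hence the residue is nilpotent of degree $\leq p-1$. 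This simultaneously re-establishes the adjusted condition claimed before the proposition.

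For the $p$-curvature, the point is that $\psi(\nabla)$ is $\mathcal{O}$-linear, hence determined by its restriction to each $U_i'$, where --- again because the parabolic bases form an honest local frame --- the computation is exactly the one in \cite{LSZ15NonabeHodexptwi}: $\nabla_{\mathrm{can}}$ has vanishing $p$-curvature, and adding $\tfrac{\dif\tilde F^{*}_i}{p}\circ F^{*}\theta_{\para,i}$ produces $\psi(\nabla_{\para,i})\cong -F^{*}\theta_{\para,i}$ under the canonical identification. The compatibility of these local expressions across $U_{ij}'$ (part of the cited verification) shows that they glue to a global $\psi(\nabla)\cong -F^{*}\theta_{\para}$, which is nilpotent of degree $\leq p-1$ as above. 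Together with the flatness inherited from the construction, this places $(H_*,\nabla)$ in $\mathrm{MIC}_{lf,*}^{p-1}$.

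The main obstacle I anticipate is making the interface between the parabolic neighbourhood of $D$ and the classical region $X\setminus D$ precise: one must check that passing to the parabolic bases does not raise the nilpotency degrees --- that is, that conjugation by $\mathrm{diag}(s_i^{\pm\alpha_j})$ preserves the block-upper-triangular (hence nilpotent on $D$) shape, exactly as in the proof of Theorem \ref{thm of equiv between adjusted par and nil conn with par bas} --- and that the exponential twist $\exp(h_{ij}F^{*}\theta_{\para})$ genuinely truncates, which is guaranteed precisely by the bound $\leq p-1$ ensuring the coefficients $1/k!$ remain invertible. Once these compatibilities are in place, both nilpotency statements follow from the classical local computation carried through the parabolic bases.
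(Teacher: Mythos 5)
Your handling of the two nilpotency conditions is sound and essentially matches the paper: the residue computation (canonical connection strong parabolic by Proposition \ref{prop of Fro of par bd}, so the parabolic residue is $F^{*}(\mathrm{Res}\,\theta_{\para})$, nilpotent of degree $\leq p-1$) is exactly the paper's argument for the adjusted condition, and the $p$-curvature identification $\psi(\nabla)\cong -F^{*}\theta_{\para}$ is the part the paper leaves to the citation of \cite{LSZ15NonabeHodexptwi}. The gap is in your opening move: you declare that ``the cocycle condition for the transition data \eqref{glueing data for par bases} and the well-definedness of the global connection'' are already covered by the classical verification of \cite{LSZ15NonabeHodexptwi}, and thereafter you only prove nilpotency statements. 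But the genuinely parabolic half of the proposition --- and the half to which the paper's proof is actually devoted --- is that the glueing matrices $\exp(h_{ij}F^{*}\theta_{\para})\,F^{*}(\phi_{ij,\mathrm{par}})$, whose entries a priori involve fractional powers of the $s_{i}$, constitute legitimate transition data for a vector bundle with parabolic bases in the sense of Definition \ref{def of global vb with pb}, relative to the declared parabolic data \eqref{local par data of inversee Cartier}; that is, after stripping off the diagonal fractional twists one must be left with regular matrices satisfying the divisibility conditions of Table \ref{tab: transition data for higher rank}. This cannot be outsourced to \cite{LSZ15NonabeHodexptwi}, which has no parabolic structure to respect: it is precisely the point where the parabolic construction could fail.

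The paper settles this with a single identity that your proposal never states. Writing $T=\mathrm{diag}\{s_{i}^{\alpha_{1}},\cdots,s_{i}^{\alpha_{r}}\}$, so that the Higgs matrix in the parabolic bases is $\theta_{\para}=T^{-1}\theta T$, one has
\begin{equation*}
\exp(h_{ij}F^{*}\theta_{\para}) \;=\; \exp\bigl(h_{ij}\,F^{*}T^{-1}\,F^{*}\theta\, F^{*}T\bigr) \;=\; F^{*}T^{-1}\exp(h_{ij}F^{*}\theta)\,F^{*}T,
\end{equation*}
so the parabolic transition data is the conjugate, by the diagonal fractional-power matrices, of the ordinary exponential-twist transition data, which is exactly the shape required of parabolic transition data. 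You half-see this issue in your final paragraph, but you cast the conjugation question as a worry about nilpotency degrees --- which is harmless, since $(T^{-1}NT)^{k}=T^{-1}N^{k}T$ --- rather than as the well-definedness of the parabolic bundle itself, and you leave it as an unresolved ``obstacle.'' As written, your argument establishes the residue and $p$-curvature bounds for an object whose existence as a parabolic flat connection you have not actually proved; supplying the displayed conjugation identity is what closes that hole.
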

\begin{proof}
        We have two things to check: the formal data above give rise to a parabolic connection bundle, and it satisfies the adjusted condition.
	Since we have shown that $(H_{*},\nabla)_{i}$ is a parabolic connection, it remains to check the transition data.
        Let $T=\mathrm{diag}\{s_{i}^{\alpha_{1}},\cdots,s_{i}^{\alpha_{r}}\}$, by the formula 
        \begin{equation*}
            \exp(h_{ij}F^{*}\theta_{\para}) = \exp(h_{ij}F^{*}T^{-1}F^{*}\theta F^{*}T) = F^{*}T^{-1}\exp(h_{ij}F^{*}\theta )F^{*}T.
        \end{equation*}

        Hence, \eqref{glueing data for par bases} is indeed a well-defined parabolic transition data for the parabolic bases \eqref{par bases of inverse Cartier}.
	Checking the adjusted condition is equivalent to showing that the residue of \eqref{local par connection} is nilpotent; the latter is obvious.
    \end{proof}
            
    \subsubsection{Parabolic Cartier Transformation}
    We now construct the Cartier morphism in this subsection.
    Before that, we have to extend the classical Cartier descent theorem \cite[Theorem 5.1]{Katz70Nilpotneconnections} to the parabolic context.
    \begin{theorem}[Cartier Descent Theorem]\label{thm of Cartier Descent}
        Let the notations remain as above.  
        The following two categories are equivalent:
        \begin{equation*}
        \begin{tikzcd}
        \{  \text{Parabolic vector bundle } V'_{*} \} /(X',D')  & \left\{ \vcenter{\hbox{Strong parabolic $\lambda$-connection $(V_{*},\nabla)$} \hbox{\qquad with vanishing $p$-curvature}}  \right\} \Bigm/(X,D)
	\arrow["{F^{*}}"{pos=0.6}, shift left, harpoon, from=1-1, to=1-2]
	\arrow["{( \, )^{\nabla}}", shift left, harpoon, from=1-2, to=1-1]
        \end{tikzcd},
        \end{equation*}
        where \( F^{*} \) denotes the Frobenius pullback defined in Proposition \ref{prop of Fro of par bd}, and \( ( \, )^{\nabla} \) is the functor that assigns horizontal sections of parabolic bases.
    \end{theorem}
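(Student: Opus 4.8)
The plan is to adapt Katz's classical Cartier descent \cite[Theorem 5.1]{Katz70Nilpotneconnections} to the parabolic setting by working entirely with parabolic bases, thereby reducing the global statement to a purely local computation on each chart where the bundle is free and a defining section $s_i$ of $D \cap U_i$ is available. The two functors are $F^*$ (the Frobenius pullback of Proposition \ref{prop of Fro of par bd}) and $(\,)^\nabla$ (taking $\nabla$-horizontal sections of the parabolic bases). First I would verify that each functor lands in the claimed category: for $F^*$, Proposition \ref{prop of Fro of par bd} already equips $F^*_{\para}V'_*$ with the canonical connection $\nabla_{\mathrm{can}}$, whose $p$-curvature vanishes because $\nabla_{\mathrm{can}}$ is locally $\dif$ on the parabolic bases $\{F^*e^j/s^{[p\alpha_j]}\}$, and whose parabolic residue is zero by the computation in that proof (the residue acts as $\{p\alpha_j\}\,\dif\!\log s$, which is zero on the graded piece, giving the \emph{strong} condition); for $(\,)^\nabla$, I must show the sheaf of horizontal sections carries a natural parabolic structure descending along $F$.

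The key steps, in order, would be: (i) Reduce to $h=1$ and to a single affine chart $U$ with parabolic basis $\{e^1/s^{\alpha_1},\ldots,e^r/s^{\alpha_r}\}$, observing via Theorem \ref{thm of equivalence between two category} that it suffices to track parabolic bases and their transition data. (ii) Establish the local descent: on $U$, a strong parabolic $\lambda$-connection with vanishing $p$-curvature, written in the parabolic basis, has a connection matrix $M_{\para}$ whose residue vanishes (strong condition), so after stripping the diagonal $-\lambda\,\mathrm{diag}(\alpha_j\,\dif\!\log s)$ twist the underlying connection on $V$ is an ordinary connection with vanishing $p$-curvature and nilpotent residue; classical Cartier descent then produces a free $\mathcal{O}_{U'}$-module $V'$ of horizontal sections with $V \cong F^*V'$, and the parabolic weights $\alpha_j$ of $V_*$ pull back to weights on $V'$ via the correspondence $\{p\alpha_j\}\mapsto \alpha_j'$ forced by Proposition \ref{prop of Fro of par bd}. (iii) Check that the two composites are naturally isomorphic to the identity: $(F^*V'_*)^\nabla \cong V'_*$ follows because the canonical connection's horizontal sections recover exactly the descended basis, and $F^*((V_*,\nabla)^\nabla) \cong (V_*,\nabla)$ follows from the classical statement applied basis-wise, both isomorphisms respecting parabolic weights. (iv) Finally, verify these local isomorphisms glue: the transition matrices for the parabolic bases transform compatibly under $F^*$ and under taking horizontal sections, which is where the explicit transition-data bookkeeping of Table \ref{tab: transition data for higher rank} is invoked.

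The main obstacle I anticipate is step (ii), specifically disentangling the parabolic weight along $D$ from the Frobenius pullback. Because $F^*$ multiplies weights by $p$ and then records only the fractional part $\{p\alpha_j\}$ while absorbing $[p\alpha_j]$ into the integral twist, one must argue that the assignment $\alpha_j \mapsto \{p\alpha_j\}$ is invertible on the relevant weight set — this requires the hypothesis (stated in Definition \ref{def of local pb}) that the denominators of the $\alpha_j$ are coprime to $p$, so that multiplication by $p$ is a bijection on $\tfrac{1}{m}\mathbb{Z}/\mathbb{Z}$ and the weights of $V'_*$ are recovered unambiguously. The subtlety is ensuring that this arithmetic reconstruction of weights is compatible with the descent of the \emph{bundle} (not merely the underlying vector bundle), i.e.\ that the integer shifts $[p\alpha_j]$ in \eqref{local par data of inversee Cartier} are correctly accounted for in the gluing so that $(\,)^\nabla$ is genuinely quasi-inverse to $F^*$ as \emph{parabolic} functors; once the coprimality hypothesis is used to fix the weight bijection, the remaining verifications are the routine basis-change computations already exemplified in Proposition \ref{prop of Fro of par bd}, so I would state them briefly and refer to Katz for the non-parabolic core.
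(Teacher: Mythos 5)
Your proposal is correct and takes essentially the same route as the paper's proof: both adapt Katz's descent argument to parabolic bases chart-by-chart, and both hinge on exactly the arithmetic point you flag as the main obstacle, namely that $\alpha \mapsto \{p\alpha\}$ is invertible on weights with denominator prime to $p$ — the paper encodes this inverse by choosing $\ell_i^j$ minimal with $p \mid (m_i^j + \ell_i^j n_i^j)$ and writing $\mathrm{P}(e_i/s_j^{\alpha_i^j}) = \mathrm{P}(e_i s_j^{\ell_i^j})/F^{*}((s'_j)^{\beta_i^j})$ with $\beta_i^j = (\alpha_i^j + \ell_i^j)/p$, which is precisely your weight bijection together with the integer-shift bookkeeping. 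The only notable difference is presentational: the paper re-runs Katz's explicit projector $\mathrm{P}$ (and the Taylor-type inverse $T$) so that it can act directly on parabolic basis elements, whereas you invoke classical Cartier descent as a black box after stripping the diagonal twist; on that point, note that after passing to the parabolic basis the residue is zero (the strong condition), not merely nilpotent, which is what makes the twisted connection matrix pole-free and the black-box reduction legitimate.
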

    Our goal is to show that $( \, )^{\nabla}$ naturally induces a parabolic structure and serves as a quasi-inverse to the Frobenius pullback.  
    The proof follows the original argument of Katz \cite{Katz70Nilpotneconnections}, adapted here to the parabolic setting.
    \begin{proof}
    Let \((V, \nabla)\) be a parabolic connection with vanishing \(p\)-curvature. To verify this property, it suffices to work locally. Let \(s_{1}, \dots, s_{n}\) be a system of \'etale coordinates. It is then clear that the parabolic residues of \((V_{*}, \nabla)\) all vanish.  
    Let \(\nabla_{i} = \nabla_{\frac{\partial}{\partial s_{i}}}\). We define the \(f^{-1}(\mathcal{O}_{X'})\)-linear operator  
    \begin{equation*}
    \mathrm{P} = \sum_{w=0}^{p-1} \prod_{i=1}^{d} \frac{(-s_{i})^{w}}{w!} \prod_{i=1}^{d} (\nabla_{i})^{(w)},
    \end{equation*}
    as introduced in \cite[Theorem 5.1]{Katz70Nilpotneconnections}.
    Since \((\frac{\partial}{\partial s_{i}})^{p}\) and \(\psi(\nabla) = 0\), we have  
    \begin{equation*}
    (\nabla_{i})^{(p)} = \nabla_{(\frac{\partial}{\partial s_{i}})^{p}} - (\nabla_{i})^{(p)} = \psi(\nabla)\left(\frac{\partial}{\partial s_{i}}\right) = 0.  
    \end{equation*}
    Thus, \(\mathrm{P}\) satisfies the equation  
    \begin{equation*}
    \nabla \mathrm{P} = 0  
    \end{equation*}  
    by direct computation, which implies the following properties:  
    \begin{itemize}  
        \item \(\mathrm{P}(V) \subset V^{\nabla}\);  
        \item \(\mathrm{P}|_{V^{\nabla}} = \text{id}\);  
        \item \(\mathrm{P}^{2} = \mathrm{P}\) is a projection onto \(V^{\nabla}\);  
        \item the intersection \(\bigcap_{w=0}^{p-1} \ker\left(\mathrm{P} \prod_{i=1}^{d} \nabla_{i}^{w}\right) = 0\).  
    \end{itemize}                            
    Next, we examine the action of \(\mathrm{P}\) on the parabolic bases. 
    We claim that this action induces a system of parabolic bases on \(X'\).  
    Suppose \(\alpha^{j}_{i} = m^{j}_{i}/n^{j}_{i}\) is the irreducible representation of \(\alpha^{j}_{i}\), and let \(\ell^{j}_{i}\) be the smallest integer such that \(p\) divides \((m_{i}^{j} + \ell^{j}_{i} n_{i}^{j})\).
    Since \(\nabla\) is \(\mathcal{O}_{X'}\)-linear, we compute:
    \[
      \mathrm{P}\left(\frac{e_i}{s_j^{\alpha_i^j}}\right) = \mathrm{P}\left(\frac{e_i s_j^{\ell_i^j}}{s_j^{\alpha_i^j + \ell_i^j}}\right) = \frac{\mathrm{P}(e_i s_j^{\ell_i^j})}{s_i^{\alpha_i^j + \ell_i^j}}.
    \]
    Let \(\beta_i^j \coloneqq \frac{\alpha_i^j + \ell_i^j}{p}\). Then:
    \[
      \mathrm{P}\left(\frac{e_i}{s_i^{\alpha_i^j}}\right) = \frac{\mathrm{P}(e_i s_i^{\ell_i^j})}{F^*\left((s'_i)^{\beta_i^j}\right)},
    \]
    which implies that \(\left\{\mathrm{P}\left(\frac{e_i}{s_j^{\alpha_j}}\right)\right\}\) forms a parabolic basis on \(X'\) with parabolic weights \(\beta_i^j\).
    The parabolic data is
    \begin{equation*}
        \{\mathrm{P}(e_{i}s_{j}^{\ell_{i}^{j}}) ;\beta_{i}^{j} \}_{i,j}.
    \end{equation*}                
    Subsequently, the inverse of the natural morphism $V^{\nabla} \otimes \mathcal{O}_{X} \to V$ can be constructed utilizing $P$ as
    \begin{equation*}
    \begin{split}
        T: V &\to V^{\nabla} \otimes \mathcal{O}_{X}, \\
           e &\mapsto \sum_{w} \Pi_{i=1}^{d}\frac{s_{i}^{w}}{w!}\mathrm{P} \,\Pi_{i=1}^{d}\nabla_{i}^{w}(e). 
    \end{split}
    \end{equation*}
    This completes the proof.
    \end{proof}
\begin{remark}
    The above theorem is also established by Wakabayashi \cite{Wak24Frobeniuspullbackanddormantopers}[Theorem 4.5], who generalizes a classical result of Cartier on higher-level Frobenius descent to the parabolic framework for algebraic curves.
\end{remark}
    
    Denote the category of parabolic connections with vanishing $p$-curvature as $\mathcal{PC}on_{0}$.
    \begin{proposition}\label{Cartier equivalence commutes with pullback and pushforward}
        Let $f: (X,D) \to (Y,B)$ be a branched covering between logarithmic curves, then pullback and pushforward commute with Cartier descent and Frobenius pullback, \ie, we have the following two natural isomorphisms,
        \begin{equation*}
        \begin{tikzcd}
	{\{\mathcal{PC}on_{0}\}/(X,D)} & {\{\mathcal{PVB}\}/(X',D')} & {\{\mathcal{PC}on_{0}\}/(X,D)} & {\{\mathcal{PVB}\}/(X',D')} \\
	{\{\mathcal{PC}on_{0}\}/(Y,B)} & {\{\mathcal{PVB}\}/(Y',B'),} & {\{\mathcal{PC}on_{0}\}/(Y,B)} & {\{\mathcal{PVB}\}/(Y',B').}
	\arrow[shift left, harpoon, from=1-1, to=1-2]
	\arrow[shift left, harpoon, from=1-2, to=1-1]
	\arrow[shift left, harpoon, from=1-3, to=1-4]
	\arrow["{f_{*}}"', from=1-3, to=2-3]
	\arrow[shift left, harpoon, from=1-4, to=1-3]
	\arrow["{f_{*}}", from=1-4, to=2-4]
	\arrow["{f^{*}}", from=2-1, to=1-1]
	\arrow[shift left, harpoon, from=2-1, to=2-2]
	\arrow["{f^{*}}"', from=2-2, to=1-2]
	\arrow[shift left, harpoon, from=2-2, to=2-1]
	\arrow[shift left, harpoon, from=2-3, to=2-4]
	\arrow[shift left, harpoon, from=2-4, to=2-3]
        \end{tikzcd}
        \end{equation*}
        Furthermore, when $f$ is Galois, the parabolic descent commutes with both Cartier descent and Frobenius as well, \ie, the following diagram commutes
        \begin{equation*}
            \begin{tikzcd}
            	{\{\mathcal{PC}on_{0}\}/(X,D)} & {\{\mathcal{PVB}\}/(X',D')} \\
            	{\{\mathcal{PC}on_{0}\}/(Y,B)} & {\{\mathcal{PVB}\}/(Y',B').}
            	\arrow["{(\cdot)^{\nabla}}", shift left, from=1-1, to=1-2]
            	\arrow["{f_{*}^{G}}", shift left, from=1-1, to=2-1]
            	\arrow["{\otimes \mathcal{O}_{X}}", shift left, from=1-2, to=1-1]
            	\arrow["{f_{*}^{G}}", shift left, from=1-2, to=2-2]
            	\arrow["{f^{*}}", shift left, from=2-1, to=1-1]
            	\arrow["{(\cdot)^{\nabla}}", shift left, from=2-1, to=2-2]
            	\arrow["{f^{*}}", shift left, from=2-2, to=1-2]
            	\arrow["{\otimes \mathcal{O}_{Y}}", shift left, from=2-2, to=2-1]
            \end{tikzcd}
        \end{equation*}
    \end{proposition}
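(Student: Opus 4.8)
The plan is to reduce every commutativity assertion to a local computation on parabolic bases, exploiting the fact that each of the four functors involved --- the parabolic pullback $f^{*}$ (Proposition \ref{prop of pullback of bd with p}), the parabolic pushforward $f_{*}$ (Section \ref{def of pushforward of par}), the Frobenius pullback $F^{*}$ (Proposition \ref{prop of Fro of par bd}), and the Cartier descent $(\,)^{\nabla}$ (Theorem \ref{thm of Cartier Descent}) --- is completely characterized by its effect on parabolic bases, with the relevant connection (canonical, respectively recovered from flat sections) uniquely pinned down by those bases. Since $F^{*}$ and $(\,)^{\nabla}$ are mutually quasi-inverse equivalences by Theorem \ref{thm of Cartier Descent}, it suffices in each diagram to verify commutation with $F^{*}$; commutation with $(\,)^{\nabla}$ then follows formally by composing with the quasi-inverse.

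First I would treat the pullback square. On underlying bundles the isomorphism $f^{*}\circ F^{*}\cong F^{*}\circ f^{*}$ is the naturality of the relative Frobenius: the square with horizontal arrows $F_{X}\colon X\to X'$ and $F_{Y}\colon Y\to Y'$ and vertical arrows $f$ and $f'=f^{(p)}$ commutes, giving $F_{Y}\circ f=f'\circ F_{X}$ and hence $F_{X}^{*}(f')^{*}\cong f^{*}F_{Y}^{*}$ on sheaves. It then remains to match the parabolic data. Writing a local parabolic basis on $(Y,B)$ as $e/y^{\alpha}$, the pullback at a point of ramification index $n$ multiplies the twisting exponent by $n$ while $F^{*}$ multiplies it by $p$; tracking the genuine weight in $[0,1)$, the two orders of composition produce exponents $\{p\{n\alpha\}\}=\{pn\alpha\}$ and $\{n\{p\alpha\}\}=\{pn\alpha\}$, which agree because the discarded integral parts are absorbed into the underlying line bundle. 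The bases are identified via $F_{X}^{*}(f')^{*}e=f^{*}F_{Y}^{*}e$, and the canonical connection, being the unique adjusted connection determined by these weights and bases (Proposition \ref{prop of Fro of par bd}), is matched as well. This settles the left-hand square and, by the quasi-inverse, its $(\,)^{\nabla}$ version.

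Next I would treat the pushforward square, which I expect to be the main obstacle: the relative Frobenius square is \emph{not} Cartesian at ramification points, so naive flat base change is unavailable and the commutation $F_{Y}^{*}\circ f'_{*}\cong f_{*}\circ F_{X}^{*}$ must be established directly on the explicit local models of Example \ref{example: toy example of pushforward} and Section \ref{def of pushforward of par}. Locally with $y=x^{n}$, the pushforward of a weight-$\alpha$ line bundle produces the family of bases $e\,x^{t}$ carrying weights $(\alpha+t)/n$ for $0\le t<n$, while $F^{*}$ multiplies weights by $p$ and replaces $x$ by its Frobenius twist. Comparing the two composites reduces to the multiset identity $\{\,\{p(\alpha+t)/n\}:0\le t<n\,\}=\{\,(\{p\alpha\}+t)/n:0\le t<n\,\}$; writing $\{p(\alpha+t)/n\}=\{p\alpha\}/n+(([p\alpha]+pt)\bmod n)/n$ and using that $t\mapsto pt\bmod n$ is a bijection of $\{0,\dots,n-1\}$ --- this is exactly where the coprimality of the ramification indices to $p$ enters --- shows the two families coincide, with the bijection furnishing the matching of bases. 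The connection matrices agree because the pushforward scales the connection matrix by $1/n$ (Section \ref{def of pushforward of par}) and the Frobenius pullback carries the canonical connection, the two operations commuting entrywise; alternatively one may deduce this square from the pullback square together with the projection formula, Lemma \ref{projection formula}.

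Finally, for the Galois case I would invoke Theorem \ref{thm of Bis for vb with pb}, by which $(f_{*})^{G}$ and $f^{*}$ are mutually quasi-inverse equivalences. By Lemma \ref{lem of functoriality of G structure} the $G$-equivariant structure is recorded on the parabolic bases, where $G$ acts compatibly on the $x^{t}$-components and on the Frobenius-twisted bases $F^{*}e$. Since taking flat sections of a $G$-equivariant connection with vanishing $p$-curvature yields a $G$-module, and Frobenius pullback carries $G$-equivariant parabolic bases to $G$-equivariant parabolic bases, both $F^{*}$ and $(\,)^{\nabla}$ are $G$-equivariant functors; restricting the pushforward square to the $G$-invariant part then yields $(f_{*})^{G}\circ F^{*}\cong F^{*}\circ(f_{*})^{G}$, and the $(\,)^{\nabla}$ statement again follows by the quasi-inverse. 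The one genuinely arithmetic input throughout is the coprimality assumption, which guarantees that the map $t\mapsto pt\bmod n$ never collapses two distinct Frobenius-twisted weights.
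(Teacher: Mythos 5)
Your proof is correct, but it runs the argument in the opposite direction from the paper's. The paper also works locally on parabolic bases, yet its only explicit computation concerns the Cartier-descent functor: it shows that the Katz projector $\mathrm{P}$ of Theorem \ref{thm of Cartier Descent} commutes with $f^{*}$ (because $(f^{*}\nabla)f^{*}e=f^{*}\nabla e$) and with $f_{*}$ (because $(f_{*}\nabla)\left(es^{m}/t^{m/n}\right)=\nabla(es^{m})/t^{m/n}$), so that $(\,)^{\nabla}$, implemented by applying $\mathrm{P}$ to parabolic bases, commutes with both vertical functors; the Frobenius direction is then asserted to follow ``in the same pattern,'' and the Galois case is referred to the argument of Theorem \ref{thm of Bis for vb with pb}. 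You do the mirror image: you check the $F^{*}$ direction explicitly (naturality of relative Frobenius for the pullback square; for the pushforward square the identity $\{p(\alpha+t)/n\}=(\{p\alpha\}+(([p\alpha]+pt)\bmod n))/n$ plus the bijection $t\mapsto pt\bmod n$), and you get the $(\,)^{\nabla}$ direction for free by conjugating with the quasi-inverse equivalence. What your route buys: only one horizontal functor ever requires a computation, the role of the coprimality hypothesis is isolated exactly where it enters, and you correctly note that the Frobenius square fails to be Cartesian at ramification points, which is why flat base change is unavailable and the explicit local model is genuinely needed. What the paper's route buys: the commutation of $\mathrm{P}$ with $f^{*}$ and $f_{*}$ is nearly tautological from the definitions of $f^{*}\nabla$ and $f_{*}\nabla$, so the descent direction --- the less formal of the two functors --- is handled in a few lines. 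One small caution: your aside that the pushforward square could alternatively be deduced from the projection formula (Lemma \ref{projection formula}) does not work as stated, since $F_{X}^{*}V'$ is not in general a pullback along $f$; your main argument does not rely on it.
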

    \begin{proof}
        The proof is based on the analysis of the parabolic bases.
        By the definition of $f^{*}$, for any section $e$, we have 
        \begin{equation*}
        (f^{*}\nabla)f^{*}e=f^{*}\nabla e, 
        \end{equation*}
        hence $\mathrm{P}_{f^{*}(\nabla)}(f^{*}e)=f^{*}\mathrm{P}(e)$.
        By the definition of $f_{*}$, we have 
        \begin{equation*}
        \mathrm{P}\left(\frac{es^{m}}{t^{m/n}}\right)=\frac{\mathrm{P}(es^{m})}{t^{m/n}},
        \end{equation*}
        since 
        \begin{equation*}
        (f_{*}\nabla)\left(\frac{es^{m}}{t^{m/n}}\right)=\frac{\nabla(es^{m})}{t^{m/n}}.
        \end{equation*}
        Hence, we have the proof of the commutative diagram.
        The functoriality of the Frobenius pullback can be shown in the same pattern.
        
        Assuming $f$ is a Galois cover, we affirm the commutativity of the parabolic descent between $f^{*}$ and $f_{*}$ employing an analogous argument to the one described in \ref{thm of Bis for vb with pb} regarding the parabolic bases.               
    \end{proof}
            
    Subsequently, we develop the parabolic Cartier transformation. 
    Let $(H_{*},\nabla)\in \MIC_{lf,*}^{p-1}$, the Cartier transformation of $(H,\nabla)$, denoted as $C_{\exp}(H,\nabla)$, is succinctly displayed in the diagram below:
    \begin{equation}\label{para exp con of C}
        \begin{tikzcd}
	& {(H'_{*},\nabla',\psi')} \\
	{(H_{*},\nabla)} && {(E_{*},\theta)}
	\arrow["{\text{par. Cartier descent}}"{pos=0.5}, from=1-2, to=2-3]
	\arrow["{\text{par. exp. twist}}"{pos=0.5}, from=2-1, to=1-2]
        \end{tikzcd},
    \end{equation}
    where the parabolic exponential twist remains consistent with \eqref{exp con of C}, substituting the input data with a parabolic bases and its related connection and transition data, while the parabolic Cartier descent refers to the functor as demonstrated in Theorem \ref{thm of Cartier Descent}.

    \begin{proposition}\label{prop of par Cartier}
        Let $(H_{*},\nabla)$ be an object in the category $\mathrm{MIC}^{p-1}_{lf,*}$.
        Then $(E_{*},\theta):=C_{\exp}(H_{*},\nabla)$ is an object in the category $\mathrm{HIG}_{lf,*}^{p-1}$.
\end{proposition}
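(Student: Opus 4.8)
The plan is to exploit the factorization $C_{\exp}=(\,)^{\nabla'}\circ(\text{par. exp. twist})$ recorded in diagram \eqref{para exp con of C} and to verify that each arrow lands in the expected category. Thus it suffices to show: (a) the parabolic exponential twist sends $(H_*,\nabla)\in\MIC^{p-1}_{lf,*}$ to a triple $(H'_*,\nabla',\psi')$ in which $(H'_*,\nabla')$ is a \emph{strong} parabolic connection with vanishing $p$-curvature and $\psi'$ is $\nabla'$-horizontal; (b) the parabolic Cartier descent of Theorem \ref{thm of Cartier Descent}, applied to $(H'_*,\nabla')$, produces a parabolic bundle $E_*=(H'_*)^{\nabla'}$ on $(X',D')$ onto which $\psi'$ descends as a Higgs field $\theta$; and (c) the resulting $(E_*,\theta)$ satisfies the two nilpotence bounds defining $\HIG^{p-1}_{lf,*}$.

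For step (a) I would first dispatch the purely non-parabolic assertions—that the local connections $\nabla'_i=\nabla_i+\frac{\dif\tilde F_i^*}{p}(\psi_i)$ glue via the transition data $\exp(h_{ij}\psi)\circ F^*\varphi_{ij}$, that the cocycle condition holds, that the glued $p$-curvature vanishes, and that $\psi'$ is horizontal. These are verbatim the computations of \cite{LSZ15NonabeHodexptwi} (compare the inverse-Cartier construction preceding Proposition \ref{prop of par inverse Cartier}), so I would only cite them. The genuinely parabolic input is that the gluing respects the parabolic bases \eqref{par bases of inverse Cartier}: writing $T=\mathrm{diag}(s_i^{\alpha_1},\dots,s_i^{\alpha_r})$, the conjugation identity $\exp(h_{ij}\psi_{\para})=F^*T^{-1}\exp(h_{ij}\psi)F^*T$ used in the proof of Proposition \ref{prop of par inverse Cartier} shows the transition matrices are admissible for the parabolic structure, so $(H'_*,\nabla')$ is a parabolic connection.

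The crux of step (a) is the \emph{strongness} of $(H'_*,\nabla')$, which is precisely the hypothesis Theorem \ref{thm of Cartier Descent} demands. I would argue that the parabolic residue of $\nabla'$ along each $D^i$ vanishes by combining two facts: the $p$-curvature of $\nabla'$ is zero, and the restriction of the $p$-curvature to $D^i$ computes $(\mathrm{Res}_{D^i}\nabla')^{[p]}-\mathrm{Res}_{D^i}\nabla'$ (the standard residue/$p$-curvature relation). Vanishing $p$-curvature therefore forces $\mathrm{Res}_{D^i}\nabla'$ to satisfy $R^{[p]}=R$; since the parabolic residue is simultaneously nilpotent—inherited from the $\MIC^{p-1}_{lf,*}$ hypothesis together with the nilpotence of the $p$-curvature's residue contributed by the twist term $\frac{\dif\tilde F_i^*}{p}(\psi_i)$—it must vanish, i.e. $(H'_*,\nabla')$ is strong. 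This is the step I expect to be the main obstacle, since it is exactly where the logarithmic/parabolic residue bookkeeping interacts with the $p$-curvature and has no counterpart in the smooth case; it is also the point at which the bounds $\leq p-1$ enter essentially.

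With (a) in hand, step (b) is immediate: Theorem \ref{thm of Cartier Descent} yields the parabolic bundle $E_*=(H'_*)^{\nabla'}$ on $(X',D')$, and because $\psi'$ is $\mathcal{O}_{X'}$-linear and $\nabla'$-horizontal it is fixed by the descent projection $\mathrm{P}$ and thus descends to an $\mathcal{O}$-linear $\theta\colon E_*\to E_*\otimes\Omega^1_{X'}(\log D')$, which is a Higgs field since $\psi'\wedge\psi'=0$. Finally, for (c) I would transport the nilpotence bounds along the descent: the nilpotence exponent of $\theta$ equals that of $\psi'=\psi(\nabla)$, which is $\leq p-1$ by hypothesis, while $\mathrm{Res}_{D'^i}\theta$ is the descent of the residue along $D^i$ of the $p$-curvature, whose nilpotence degree is controlled by $\mathrm{Res}_{D^i}\nabla$ and is therefore $\leq p-1$. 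Hence $(E_*,\theta)\in\HIG^{p-1}_{lf,*}$, completing the argument.
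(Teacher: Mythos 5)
Your proposal follows exactly the route the paper intends: $C_{\exp}$ is defined as the parabolic exponential twist followed by the parabolic Cartier descent of Theorem \ref{thm of Cartier Descent} (diagram \eqref{para exp con of C}), and the paper's own proof of Proposition \ref{prop of par Cartier} consists of a single sentence deferring all verifications to \cite[Section 3]{LSZ15NonabeHodexptwi}. Your steps (a)--(c) are therefore the correct skeleton, and you are right that the genuinely parabolic issue is the strongness of $(H'_*,\nabla')$, which is precisely what Theorem \ref{thm of Cartier Descent} requires as input. However, your justification of that very step has a gap: you claim the parabolic residue of $\nabla'=\nabla+\frac{\dif \tilde F_i^*}{p}(\psi_i)$ is nilpotent because it is the sum of $\mathrm{Res}_{D^i}\nabla$ (nilpotent by the $\MIC^{p-1}_{lf,*}$ hypothesis) and the residue contributed by the twist term (nilpotent because $\psi$ is). A sum of two nilpotent endomorphisms need not be nilpotent --- for instance $\left(\begin{smallmatrix}0&1\\0&0\end{smallmatrix}\right)+\left(\begin{smallmatrix}0&0\\1&0\end{smallmatrix}\right)$ is invertible --- so this inference is invalid as written.

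The missing ingredient is that the two residues commute. This follows from Katz's horizontality of the $p$-curvature: $\nabla^{\mathrm{End}}\psi=0$, and since $F^*\delta$ (with $\delta=s\,\partial/\partial s$) is parallel for the canonical connection on $F^*T_X(-\log D)$, contraction gives $[\nabla_\delta,\psi(F^*\delta)]=0$; both operators preserve $sH$, so reduction modulo $s$ yields $[\mathrm{Res}_{D}\nabla,\mathrm{Res}_{D}\psi]=0$, and a sum of commuting nilpotents is nilpotent. With this inserted, your Artin--Schreier argument ($R^{[p]}=R$ together with nilpotence forces $R=0$) goes through, provided you also record a fact you use tacitly: the identity ``$p$-curvature restricted to $D^i$ equals $R^{[p]}-R$'' holds for the \emph{parabolic} residue as well, because the parabolic residue differs from the ordinary one by the diagonal matrix of weights, whose entries have denominators prime to $p$, hence lie in $\mathbb{F}_p$ and are killed by $R\mapsto R^{[p]}-R$. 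One further small inaccuracy: in step (c) the bound on the nilpotence degree of $\mathrm{Res}_{D'^i}\theta$ comes from the nilpotence exponent of $\psi(\nabla)$ itself (its residue being a specialization of it), not from $\mathrm{Res}_{D^i}\nabla$; the conclusion is unaffected.
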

    \begin{proof}
        The proof is exactly the same as the initial scenario outlined in \cite[Section 3]{LSZ15NonabeHodexptwi}.
    \end{proof}
    
    \begin{theorem}\label{thm of catier correspondence}
    Let notations be as above. We have the equivalence of the following two categories,
        \begin{equation*}
        \begin{tikzcd}
            {\{\mathrm{MIC}_{lf,*}^{p-1}\}/(X,D)} & {\{\mathrm{HIG}^{p-1}_{lf,*}\}/(X',D')}.
            \arrow["{C_{\exp}}", shift left, harpoon, from=1-1, to=1-2]
            \arrow["{C_{\exp}^{-1}}", shift left, harpoon, from=1-2, to=1-1]
        \end{tikzcd}
        \end{equation*}
    \end{theorem}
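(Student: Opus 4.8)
The plan is to exhibit $C_{\exp}$ and $C_{\exp}^{-1}$ as mutually quasi-inverse functors, i.e.\ to construct natural isomorphisms $C_{\exp}\circ C_{\exp}^{-1}\cong \mathrm{Id}$ on $\{\mathrm{HIG}^{p-1}_{lf,*}\}/(X',D')$ and $C_{\exp}^{-1}\circ C_{\exp}\cong \mathrm{Id}$ on $\{\mathrm{MIC}^{p-1}_{lf,*}\}/(X,D)$. By Propositions \ref{prop of par inverse Cartier} and \ref{prop of par Cartier} both functors already land in the correct target categories, so only the comparison of the two compositions with the identity remains. Since both functors are defined by the exponential twist on a common open cover $\{U_i\}$, with the same Frobenius lifts $\tilde F_i$ and the same Deligne--Illusie data $h_{ij}$, I would carry out the comparison locally on each $U_i$ and then check that the resulting local isomorphisms are compatible with the glueing \eqref{glueing data for par bases}.

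First I would reduce to the underlying non-parabolic statement. On a fixed $U_i$, writing everything in the parabolic bases $\{e_i^j/s_i^{\alpha_j}\}$ and in their Frobenius pullbacks, the parabolic exponential twist, the Frobenius pullback, and the parabolic Cartier descent act exactly as the classical operations of Lan--Sheng--Zuo applied to the connection matrix $M_{\mathrm{para}}$ (respectively the Higgs matrix $\theta_{\mathrm{para}}$) expressed in these bases. The equivalence in the logarithmic non-parabolic case is established in \cite{LSZ15NonabeHodexptwi} (and \cite[Appendix]{LSYZ19uniofpadiccurveviaflow}), so the local natural isomorphisms $C_{\exp}\circ C_{\exp}^{-1}|_{U_i}\cong\mathrm{Id}$ and $C_{\exp}^{-1}\circ C_{\exp}|_{U_i}\cong\mathrm{Id}$ are immediate. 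Because the parabolic transition data \eqref{glueing data for par bases} differ from the Lan--Sheng--Zuo transition data only by the scalar conjugation $F^{*}T^{-1}(\,\cdot\,)F^{*}T$ recorded in the proof of Proposition \ref{prop of par inverse Cartier}, the local isomorphisms are intertwined by the same conjugation and therefore glue to global ones; their compatibility with $\nabla$, $\psi$ and $\theta$ is the identical formal verification as in the non-parabolic case and need not be repeated.

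The one genuinely new point --- and the main obstacle --- is to confirm that the parabolic weights return to themselves under the composition, which is where the hypothesis that the denominators are coprime to $p$ is essential. I would track a single weight $\alpha=m/n$ with $\gcd(m,n)=1$ and $p\nmid n$. The functor $C_{\exp}^{-1}$ replaces $\alpha$ by its Frobenius pullback $\{p\alpha\}$, as in the parabolic data \eqref{local par data of inversee Cartier}, so on $X$ the weight becomes $r/n$ with $r\equiv pm \pmod n$ and $0\le r<n$; one notes $\gcd(r,n)=1$ precisely because $p\nmid n$. The Cartier-descent step of $C_{\exp}$ then sends $r/n$ to $(r+\ell n)/(pn)=k/n$, where $\ell$ is the smallest non-negative integer with $p\mid(r+\ell n)$ and $k=(r+\ell n)/p$, as computed in the proof of Theorem \ref{thm of Cartier Descent}. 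Taking $\ell=\lfloor pm/n\rfloor$ gives $r+\ell n=pm$, hence $k=m$; and since $p\nmid n$ the residues $r+\ell n\bmod p$ are distinct for $\ell=0,\dots,p-1$, while $\lfloor pm/n\rfloor<p$ because $0\le m<n$, so this $\ell$ is indeed the minimal one. Thus the weight returns to $m/n=\alpha$. The symmetric computation handles $C_{\exp}^{-1}\circ C_{\exp}$, and combined with the bundle-level isomorphisms and the matching of adjustedness and horizontal sections from Theorem \ref{thm of Cartier Descent}, this shows that both compositions are naturally isomorphic to the identity, completing the proof of the equivalence.
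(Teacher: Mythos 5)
Your proposal is correct and follows essentially the same route as the paper: the paper's proof likewise combines the fact that the Cartier and inverse Cartier exponential twists are mutually inverse (reducing locally to the non-parabolic case of \cite{LSZ15NonabeHodexptwi}, with the parabolic transition data handled by the conjugation $F^{*}T^{-1}(\,\cdot\,)F^{*}T$) with the parabolic Cartier descent equivalence of Theorem \ref{thm of Cartier Descent}. Your explicit tracking of the weight $m/n \mapsto \{pm/n\} \mapsto m/n$ is a correct unpacking of what the quasi-inverseness of $F^{*}$ and $(\,)^{\nabla}$ in Theorem \ref{thm of Cartier Descent} already encodes, so it adds detail rather than a new ingredient.
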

    \begin{proof}
        Given that the exponential twists related to the Cartier and inverse Cartier are mutually inverse, then applying Theorem \ref{thm of Cartier Descent} finalizes the proof.
    \end{proof}

    \subsubsection{Functorial Properties of Parabolic Non-Abelian Hodge Correspondence}
    
The main result of this section focuses on the functorial properties of the non-abelian Hodge correspondence, as established in Theorem \ref{thm of catier correspondence}.
    \begin{theorem}\label{thm of par inverse commutes with pullback and push forward}
        Let $X$ and $Y$ be two smooth varieties over a perfect field $k$ of characteristic $p$, and $f: (X,D) \to (Y,B)$ be a separable branched covering with a $W_{2}(k)$-lifting $\tilde{f}: (\tilde{X},\tilde{D}) \to (\tilde{Y},\tilde{B})$, and we assume that all ramification indices are co-prime to $p$.
        %Let $B$ (resp. $\tilde{B}$) be a divisor of $Y$ (resp. $\tilde{Y}$), and let $D=(f^{*}B)_{\mathrm{red}}$ (resp. $\tilde{D}$) be the reduced divisor of $f^{*}B$ (resp. $\tilde{f}^{*}\tilde{B}$), 
        Then there exists a natural isomorphism
        \begin{equation}\label{pullback}
            \begin{tikzcd}
            {\{\mathrm{MIC}_{lf,*}^{p-1}\}/(X,D)} & {\{\mathrm{HIG}^{p-1}_{lf,*}\}/(X,D)} \\
            {\{\mathrm{MIC}_{lf,*}^{p-1}\}/(Y,B)} & {\{\mathrm{HIG}^{p-1}_{lf,*}\}/(Y,B),}
            \arrow["{C_{\exp}}", shift left, harpoon, from=1-1, to=1-2]
            \arrow["{C_{\exp}^{-1}}", shift left, harpoon, from=1-2, to=1-1]
            \arrow["{f^{*}}", from=2-1, to=1-1]
            \arrow["{C_{\exp}}", shift left, harpoon, from=2-1, to=2-2]
            \arrow["{f^{*}}"', from=2-2, to=1-2]
            \arrow["{C_{\exp}^{-1}}", shift left, harpoon, from=2-2, to=2-1]
            \end{tikzcd}
        \end{equation}
        and
        \begin{equation}\label{pushforward}
            \begin{tikzcd}
        	{\{\mathrm{MIC}_{lf,*}^{p-1}\}/(X,D)} & {\{\mathrm{HIG}^{p-1}_{lf,*}\}/(X,D)} \\
        	{\{\mathrm{MIC}_{lf,*}^{p-1}\}/(Y,B)} & {\{\mathrm{HIG}^{p-1}_{lf,*}\}/(Y,B).}
        	\arrow["{C_{\exp}}", shift left, harpoon, from=1-1, to=1-2]
        	\arrow["{C_{\exp}^{-1}}", shift left, harpoon, from=1-2, to=1-1]
        	\arrow["{f_{*}}"', from=1-1, to=2-1]
        	\arrow["{C_{\exp}}", shift left, harpoon, from=2-1, to=2-2]
        	\arrow["{f_{*}}", from=1-2, to=2-2]
        	\arrow["{C_{\exp}^{-1}}", shift left, harpoon, from=2-2, to=2-1]
        \end{tikzcd}
        \end{equation}
        Moreover, if $f$ is Galois with automorphism group $G$, we have
        \begin{equation*}
            \begin{tikzcd}
            	{\{\mathrm{MIC}_{lf,*}^{p-1}\}/(X,D)} & {\{\mathrm{HIG}^{p-1}_{lf,*}\}/(X,D)} \\
            	{\{\mathrm{MIC}_{lf,*}^{p-1}\}/(Y,B)} & {\{\mathrm{HIG}^{p-1}_{lf,*}\}/(Y,B).}
            	\arrow["{C_{\exp}}", shift left, from=1-1, to=1-2]
            	\arrow["{f_{*}^{G}}", shift left, from=1-1, to=2-1]
            	\arrow["{C^{-1}_{\exp}}", shift left, from=1-2, to=1-1]
            	\arrow["{f_{*}^{G}}", shift left, from=1-2, to=2-2]
            	\arrow["{f^{*}}", shift left, from=2-1, to=1-1]
            	\arrow["{C_{\exp}}", shift left, from=2-1, to=2-2]
            	\arrow["{f^{*}}", shift left, from=2-2, to=1-2]
            	\arrow["{C^{-1}_{\exp}}", shift left, from=2-2, to=2-1]
            \end{tikzcd}
        \end{equation*}
    \end{theorem}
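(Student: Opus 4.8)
The plan is to factor both functors into their elementary constituents and to verify compatibility with $f^{*}$, $f_{*}$ and $f_{*}^{G}$ one factor at a time. Recall that $C_{\exp}^{-1} = (\text{exponential twist}) \circ F^{*}$ and $C_{\exp} = (\,\cdot\,)^{\nabla} \circ (\text{exponential twist})$, where $F^{*}$ is the parabolic Frobenius pullback of Proposition~\ref{prop of Fro of par bd} and $(\,\cdot\,)^{\nabla}$ is the parabolic Cartier descent of Theorem~\ref{thm of Cartier Descent}. Proposition~\ref{Cartier equivalence commutes with pullback and pushforward} already provides natural isomorphisms expressing that $F^{*}$ and $(\,\cdot\,)^{\nabla}$ commute with $f^{*}$, $f_{*}$, and (in the Galois case) $f_{*}^{G}$. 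Consequently the whole theorem reduces to the single assertion that the \emph{parabolic exponential twist} commutes with these three functors; granting this, each of the three squares is obtained by pasting together the squares of its individual factors.

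\textbf{The pullback square.} For \eqref{pullback} the heart of the matter is the functoriality of the Deligne--Illusie data under $\tilde f$. Because $\tilde f\colon(\tilde X,\tilde D)\to(\tilde Y,\tilde B)$ is a $W_{2}(k)$-lifting of $f$, any local Frobenius lift $\tilde F_{i}$ on $\tilde Y$ adapted to $\tilde B$ pulls back through $\tilde f$ to a local Frobenius lift on $\tilde X$ adapted to $\tilde D$, compatibly with the relative Frobenius squares. First I would check that, for such compatible choices, the local homomorphisms $h_{ij}$ of \eqref{def of DI class} satisfy $f^{*}h_{ij}^{Y}=h_{ij}^{X}$, so that the Deligne--Illusie class is functorial, $f^{\prime *}\kappa_{Y}=\kappa_{X}$. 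Since the pullback of a parabolic basis is again a parabolic basis (Proposition~\ref{prop of pullback of bd with p}) and $F^{*}$ commutes with $f^{*}$, the gluing cocycle $\exp(h_{ij}F^{*}\theta_{\para})\,F^{*}(\phi_{ij,\para})$ of \eqref{glueing data for par bases} transforms as $f^{*}\exp(h_{ij}^{Y}F^{*}\theta_{\para})=\exp\!\big(h_{ij}^{X}F^{*}(f^{*}\theta)_{\para}\big)$, exactly by the conjugation formula computed in the proof of Proposition~\ref{prop of par inverse Cartier}. Matching the local connections $\nabla_{\para,i}$ of \eqref{local par connection} in the same way produces the required natural isomorphism for $C^{-1}_{\exp}$; the twist used for $C_{\exp}$, which replaces $F^{*}\theta$ by the $p$-curvature $\psi$, is handled identically.

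\textbf{The pushforward squares.} I would treat the Galois case first. When $f$ is Galois with group $G$, Theorem~\ref{thm of Bis for vb with pb} exhibits $f_{*}^{G}$ and $f^{*}_{\para}$ as quasi-inverse equivalences between the $G$-equivariant objects on $(X,D)$ and the objects on $(Y,B)$. The exponential twist is manifestly $G$-equivariant once the lift $\tilde f$ is chosen $G$-equivariantly, since then $h_{ij}$, $\theta_{\para}$ and the transition cocycle are all $G$-equivariant. Hence the pullback compatibility above upgrades to a compatibility of $G$-equivariant objects, and transporting it through the equivalence of Theorem~\ref{thm of Bis for vb with pb} yields the commuting square for $f_{*}^{G}$, which is the last diagram of the theorem. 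For a general separable covering, square \eqref{pushforward} follows either by the direct local computation using the description of $f_{\para,*}$ in Section~\ref{def of pushforward of par} --- where $f_{\para,*}$ regroups the parabolic bases $\{e^{j}x^{t}\}$, rescales the weights to $(\alpha^{j}+t)/n$ and the connection matrix by $1/n$ --- or by passing to a Galois closure and descending along the intermediate subgroup.

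\textbf{Main obstacle.} I expect the genuine difficulty to lie in the pushforward compatibility of the Deligne--Illusie data, rather than in the (essentially formal) factorization and pullback steps. Concretely, one must verify that the factor $1/p$ entering $h_{ij}$ interacts correctly with the factor $1/n$ by which $f_{\para,*}$ rescales the connection and the Higgs field; this is precisely where the hypothesis that every ramification index $n$ is coprime to $p$ becomes indispensable, ensuring that $1/n$ is a unit and that $\exp(h_{ij}F^{*}\theta_{\para})$ descends through the branched cover without introducing denominators divisible by $p$. Tracking these scalars across the regrouping of bases, and confirming that the cocycle condition survives the pushforward, is the step demanding real computation; the reduction-to-Galois route avoids it at the cost of a Galois-descent bookkeeping argument of comparable length.
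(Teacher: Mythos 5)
Your proposal is correct and takes essentially the same route as the paper's proof: the identical reduction to functoriality of the exponential twist via Proposition \ref{Cartier equivalence commutes with pullback and pushforward}, followed by local identification of the parabolic bases, connections, and Deligne--Illusie/transition data for Frobenius liftings chosen compatibly with $\tilde f$ (the paper phrases this as $f^{*}\dif\tilde F^{*}_{U_i}=\dif\tilde F^{*}_{Y_i}$ and $F\circ f=f\circ F$). The only organizational difference is in the pushforward square \eqref{pushforward}, where the paper carries out the direct local computation you list as your second option --- tracking the weights $(\beta+k)/n$ through Frobenius and matching them with $p\beta/n+\{pk/n\}$ on the other side --- rather than reducing to the Galois case first.
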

\begin{proof}     
        It suffices to establish the functorial characteristics of the exponential twist, as the functorial properties related to Frobenius and Cartier descent were already demonstrated in Proposition \ref{Cartier equivalence commutes with pullback and pushforward}.
        We only have to show it for the exponential twist for inverse Cartier.
        Let $(E_{*},\theta)$ be a parabolic Higgs bundle on $(Y,B)$, with parabolic data
        \begin{equation*}
            (\{ Y_{i}\}_{i},\{\bm{e}\}_{i},\bm{\alpha}).
        \end{equation*}
        Then parabolic bases is $\bm{e}/t_{i}^{\alpha}$.
        The parabolic bases of the composition $C^{-1}_{(X,B)\subset (\tilde{X},\tilde{B})}\circ f^{*}(E_{*},\theta)$ is given by
        \begin{equation*}
        F^{*}f^{*}\bm{e}/s_{i}^{p\bm{n}\alpha},
        \end{equation*}
        where $\bm{n}$ denotes the ramification index.

        The parabolic bases of $f^{*}\circ C^{-1}_{(Y,D)\subset (\tilde{Y},\tilde{D})}(E_{*},\theta)$ is 
        \begin{equation*}
            f^{*}F^{*}\bm{e}/s_{i}^{p\bm{n}\alpha}.
        \end{equation*}
        The parabolic bases is naturally identified based on $F\circ f=f\circ F$.
        Since $\tilde{f}$ lifts $f$, we have $f^{*} \dif \tilde{F}_{U_{i}}^{*} = \dif \tilde{F}_{Y_{i}}^{*}$.
        Therefore, the parabolic connection $f^{*}(\nabla_{\mathrm{can}} + \frac{\dif F^{*}_{i}}{p} \circ F^{*}\theta_{\mathrm{par}})$ and $\nabla_{\mathrm{can}} + \frac{\dif F^{*}_{i}}{p} \circ f^{*}F^{*}\theta_{\mathrm{par}}$ can be identified, as well as the transition data.
        This proves the first isomorphism \eqref{pullback}.

        Let $(F_{*},\eta)$ be a parabolic Higgs bundle on $(X,D)$, with parabolic data
        \begin{equation*}
            (\{ U_{i}\}_{i},\{\bm{f}\}_{i},\bm{\beta}).
        \end{equation*}
        Then parabolic bases is $\bm{f}/s_{i}^{\beta}$.
        Therefore, the parabolic bases of $f_{*}(F,\eta)$ consists of the elements 
        \begin{equation*}
        \bm{f}s_{i}^{k}/t_{i}^{(\beta+k)/n},
        \end{equation*}
        where $n$ is the ramification index.
        The parabolic bases of $C^{-1}_{(X,B)\subset (\tilde{X},\tilde{B})}\circ f_{*}(E_{*},\theta)$ is 
        \begin{equation}\label{bases of pushforward of inverse Cartier}
            \frac{F^{*}\bm{f}s_{i}^{pk}}{t_{i}^{p\beta/n}\cdot t_{i}^{pk/n}}=\frac{F^{*}\bm{f}s_{i}^{\{pk/n\}n}}{t_{i}^{p\beta/n}\cdot t_{i}^{\{pk/n\}}},
        \end{equation}
        where $\{ \cdot \}$ is the decimal part function.
        The parabolic bases for $C^{-1}(F_{*},\eta)$ is $F^{*}f/s_{i}^{p\beta}$, and the parabolic bases for $f_{*}C^{-1}(F_{*},\eta)$ is
        \begin{equation*}
            \frac{F^{*}fs_{i}^{m}}{t^{p\beta/n}\cdot t^{m/n}},
        \end{equation*}
        which corresponds to \eqref{bases of pushforward of inverse Cartier} if we take $m=n\{ \frac{pk}{n}\}$.
        The transition and connection data can be naturally determined, therefore \eqref{pushforward} is proved.
    \end{proof}

    \begin{remark}
        Theorems \ref{thm of par inverse commutes with pullback and push forward} can be succinctly expressed by stating that the parabolic pullback and pushforward operations are compatible with the inverse Cartier transformation.
        Alfaya and Biswas proved an analogous result in the case $f$ is a finite morphism between two Riemann surfaces in \cite{AlBis23pullandpshofparbd}.
    \end{remark}

    Then we can prove that the inverse Cartier constructed above coincides with the one constructed by Krishnamoorthy-Sheng, the latter being denoted by $C^{-1}_{\para}$ in \cite{KrisSheng20perideRhamovercur}.

    \begin{corollary}\label{cor of two par inverse cartier coincide}
        $C^{-1}_{\exp}$ (resp. $C_{\exp}$) on $(X,D) \subset (\tilde{X},\tilde{D})$ is equivalent to $C^{-1}_{\para}$ (resp. $C_{\para}$). 
    \end{corollary}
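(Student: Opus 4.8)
The plan is to exploit the functoriality established in Theorem~\ref{thm of par inverse commutes with pullback and push forward} together with the Biswas-Iyer-Simpson equivalence of Theorem~\ref{thm of Bis for vb with pb}, thereby reducing the comparison to the already-known non-parabolic case. Recall that the Krishnamoorthy-Sheng functor $C^{-1}_{\para}$ is by construction the descent, along the BIS correspondence, of the ordinary logarithmic inverse Cartier transformation on a suitable finite Galois cover: one chooses a branched Galois cover $f\colon (Z,D_Z)\to(X,D)$, equipped with a $W_2(k)$-lifting $\tilde f$ lifting the chosen lifting of $(X,D)$, whose ramification indices are common multiples of the denominators of the parabolic weights, so that $f^{*}_{\para}$ trivializes the parabolic structure (as in the discussion following Proposition~\ref{prop of pullback of bd with p}). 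One then applies the classical inverse Cartier $C^{-1}_{\mathrm{OV}}$ of Ogus-Vologodsky (equivalently the Lan-Sheng-Zuo exponential twist) to the resulting $G$-equivariant Higgs bundle on $(Z,D_Z)$ and descends by $(f_{\para,*})^{G}$. Since all weight denominators are coprime to $p$ by our standing assumption, such a cover meets the hypotheses of both cited theorems.

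First I would record the two structural facts. From the proof of Theorem~\ref{thm of Bis for vb with pb}, the composite $(f_{\para,*})^{G}\circ f^{*}_{\para}$ is naturally isomorphic to the identity on parabolic $\lambda$-connections over $(X,D)$ (this is the isomorphism~\eqref{back then decent}). From Theorem~\ref{thm of par inverse commutes with pullback and push forward}, the functor $C^{-1}_{\exp}$ commutes with $f^{*}_{\para}$. Combining these, for any $(E_{*},\theta)\in\HIG^{p-1}_{lf,*}(X,D)$ I obtain the chain
\begin{equation*}
C^{-1}_{\exp}(E_{*},\theta)\cong (f_{\para,*})^{G} f^{*}_{\para}\,C^{-1}_{\exp}(E_{*},\theta)\cong (f_{\para,*})^{G}\,C^{-1}_{\exp}\,f^{*}_{\para}(E_{*},\theta).
\end{equation*}

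The remaining step is to identify $C^{-1}_{\exp}\,f^{*}_{\para}(E_{*},\theta)$ with $C^{-1}_{\mathrm{OV}}\,f^{*}_{\para}(E_{*},\theta)$. Here $f^{*}_{\para}(E_{*},\theta)$ carries a trivial parabolic structure, i.e.\ all weights are integral; inspecting the construction of $C^{-1}_{\exp}$, the parabolic bases reduce to ordinary bases, the local connection~\eqref{local par connection} becomes the Lan-Sheng-Zuo connection $\nabla_{\mathrm{can}}+\tfrac{\dif\tilde F^{*}}{p}F^{*}\theta$, and the glueing datum~\eqref{glueing data for par bases} becomes $\exp(h_{ij}F^{*}\theta)\circ F^{*}\phi_{ij}$, so that on integral-weight objects $C^{-1}_{\exp}$ coincides with the non-parabolic inverse Cartier used by Krishnamoorthy-Sheng. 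Substituting this identification yields
\begin{equation*}
C^{-1}_{\exp}(E_{*},\theta)\cong (f_{\para,*})^{G}\,C^{-1}_{\mathrm{OV}}\,f^{*}_{\para}(E_{*},\theta)=C^{-1}_{\para}(E_{*},\theta),
\end{equation*}
which is the claimed equivalence; the corresponding statement for $C_{\exp}$ and $C_{\para}$ then follows because each is the quasi-inverse of its counterpart by Theorem~\ref{thm of catier correspondence}, and quasi-inverses are unique up to natural isomorphism. The main obstacle I anticipate is purely one of bookkeeping at the level of the $W_2(k)$-liftings: one must verify that $\tilde f$ can be chosen compatibly, so that the local Frobenius lifts and the Deligne-Illusie class on $(Z,D_Z)$ are the pullbacks of those on $(X,D)$ --- precisely the compatibility $f^{*}\dif\tilde F^{*}_{U_i}=\dif\tilde F^{*}_{Z_i}$ used in the proof of Theorem~\ref{thm of par inverse commutes with pullback and push forward} --- since otherwise the identification of the glueing data above, and hence the commutation with $f^{*}_{\para}$, would fail. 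The intrinsic nature of $C^{-1}_{\para}$ asserted by Krishnamoorthy-Sheng guarantees that the particular choice of trivializing cover is immaterial.
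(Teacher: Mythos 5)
Your proposal is correct and follows essentially the same route as the paper: both unwind the definition of $C^{-1}_{\para}$ as the BIS-descent $(f_{\para,*})^{G}\circ C^{-1}\circ f^{*}_{\para}$ along a trivializing cover, then combine the commutation of $C^{-1}_{\exp}$ with $f^{*}_{\para}$ (Theorem \ref{thm of par inverse commutes with pullback and push forward}) with the equivalence $(f_{\para,*})^{G}f^{*}_{\para}\cong\mathrm{Id}$ (Theorem \ref{thm of Bis for vb with pb}). Your extra steps — explicitly checking that $C^{-1}_{\exp}$ reduces to the ordinary inverse Cartier on integral-weight objects, and flagging the compatibility of the $W_2(k)$-liftings — are details the paper leaves implicit, and your quasi-inverse argument for the $C$ direction matches the paper's ``follows similarly.''
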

    \begin{proof}
        We demonstrate the equivalence between $C_{\para}^{-1}$ and $C^{-1}_{\exp}$; the equivalence for the Cartier transformation follows similarly.
        According to the definition of $C^{-1}_{\para}$, it is sufficient to select a finite \'etale cover $f:(Y,B) \to (X,D)$ where the ramification indices are multiples of the respective denominators of the parabolic weights. 
        The existence of such a cover is guaranteed by the Kawamata-Viehweg lemma.
        The functor $C_{\para}^{-1}$ is defined through the following diagram:
        \begin{equation*}
	\begin{tikzcd}
	{\{ G\text{-parabolic Higgs bundle} \} / (Y',B') } & { \{ G\text{-parabolic flat connection} \} / (Y,B)} \\
	{\{ \text{parabolic Higgs bundle} \} /(X',D')} & { \{ \text{parabolic flat connection} \} /(X,D) }
	\arrow["{C^{-1}}", from=1-1, to=1-2]
	\arrow["f^{*}_{\para}"', from=1-1, to=2-1]
	\arrow["(f_{\para,*})^{G}", from=1-2, to=2-2]
	\arrow["{C^{-1}_{\para}}", from=2-1, to=2-2]
	\end{tikzcd}.
	\end{equation*}                
        By Theorem \ref{thm of Bis for vb with pb}, and \ref{thm of par inverse commutes with pullback and push forward}, we have
        \begin{equation*}
            C_{\para}^{-1}=(f_{\para,*})^{G}C^{-1}f_{\para}^{*}=(f_{\para,*})^{G}f_{\para}^{*}C^{-1}_{\exp}=C^{-1}_{\exp}.
        \end{equation*}
    \end{proof}    
    In order to prevent the introduction of extra notations, we shall henceforth eschew the use of $C_{\exp}^{-1}$, opting instead to denote it by $C_{\para}^{-1}$, or more succinctly as $C^{-1}$, provided that no ambiguity ensues.

    \begin{remark}\label{survey of positive char parabolic nonabelianHodge}
        This remark surveys recent advancements in extending non-abelian Hodge correspondence to parabolic and parahoric settings in positive characteristic. 
        As outlined in the introduction, the foundational parabolic non-abelian Hodge correspondence was established by Krishnamoorthy and Sheng in \cite{KrisSheng20perideRhamovercur}, generalizing the Ogus-Vologodsky correspondence to algebraic curves. 
        Subsequent work by Sheng, Sun, and Wang in \cite{ShengSunWangAnonabelianHodgecorforprincipalbdsinpositivechar} further extended this framework to principal bundles and parahoric bundles over higher-dimensional varieties. 
        Wakabayashi \cite{Wak24Frobeniuspullbackanddormantopers} independently recovered a generalized Ogus-Vologodsky correspondence for curves, incorporating higher-level \(\mathcal{D}\)-modules.  

        Building on Chen and Zhu’s foundational results in \cite{ChenZhunaHodgeforcurincharp} for curves in positive characteristic, recent studies have expanded logarithmic parabolic and parahoric non-abelian Hodge correspondences. 
        For instance, Shen \cite{Shen24TameramigeomeLaninposichar} established a correspondence between flat connections with regular singularities on a curve \(X\) and meromorphic Higgs bundles with first-order poles on its Frobenius twist \(X^{(1)}\). 
        Li and Sun \cite{LiSun24Tameparahoricnahcorreinpositivecharovercurve} proved a version for tame \(G\)-local systems over a curve \(C\) and logarithmic \(G\)-Higgs bundles over the Frobenius twist \(C'\). 
        Additionally, de Cataldo, André, and Siqing \cite{deCataAndreSiqing25LogarithmicNonAbelianHodgeTheoryforcurves} derived a non-parabolic logarithmic correspondence for curves over algebraically closed fields of characteristic \(p\), further broadening the scope of these results.  
    \end{remark}

\subsection{On Higgs-de Rham flow of rank \texorpdfstring{{\boldmath$2$}}{2}}	
This section is devoted to giving an algorithm to ascertain the maximal destabilizing subbundle within the foundational bundle of a parabolic flat connection.
    This parabolic flat connection is derived from the inverse Cartier transformation of the parabolic graded Higgs bundle of rank $2$ Higgs bundle, which generalizes the algorithm introduced by Sun-Yang-Zuo \cite[Appendix A]{SYZ22Projecrysrepoffmtgpandtwistedflow}.

We begin with a little general setting.
Let $X_{2}$ be a smooth variety with a normal crossing divisor $D_{2}$, let $(X_{1},D_{1})$ be its reduction modulo $p$. 
Let $ (E, \theta) $ (here and subsequently, $*$ is omitted for simplicity) be a graded parabolic Higgs bundle of length $ 2 $ of the form:
\begin{equation}\label{def of gaded Higgs bd}
\begin{split}
&E \cong L_1 \oplus L_2; \\
&\theta: L_{1} \rightarrow L_{2} \otimes \Omega_{X_{1}}^{1}(\log D_{1}).
\end{split}
\end{equation}
Denote $C_{(X_{1},D_{1})\subset (X_{2},D_{2})}^{-1}(E,\theta)$ by $(H,\nabla)$.
By the exactness of the inverse Cartier, we have the following short exact sequence
\begin{equation}\label{ext of inverse Cartier}
0 \to (F^{*}L_{2},\nabla_{\mathrm{can}}) \to (H,\nabla) \to (F^{*}L_{1},\nabla_{\mathrm{can}}) \to 0,
\end{equation}
where the canonical connection $\nabla_{\mathrm{can}}$ on $F^{*}L_{i}$ is defined by Proposition \ref{prop of Fro of par bd}. 
    
    By forgetting the connection, we have the short exact sequence of the bundle
    \begin{equation}\label{eq: ses of bd of flat connection}
        0 \to F^{*}L_{2} \to H \to F^{*}L_{1} \to 0.
    \end{equation}
    We denote the corresponding extension class in $\mathrm{Ext}^{1}(F^{*}L_{1},F^{*}L_{2})$ as $\xi$.
Recall the lifting $ (X_{1}, D_{1}) \hookrightarrow (X_{2}, D_{2}) $ gives rise to a cohomology class 
\begin{equation}\label{Deligne-Illusie class}
\kappa \in \mathrm{Ext}^{1}(F^{*}\Omega_{X_{1}}(\log D_{1}),\mathcal{O}_{X_{1}}) \cong H^{1}(X_{1}, F^{*}T_{X_{1}}(-\log D_{1})), 
\end{equation}
which is well-known as the Deligne-Illusie class.
    In fact, the Deligne-Illusie class can be expressed as the C\v{e}ch representation $[h_{ij}]$ as defined in \eqref{def of DI class}. 
    	
The next proposition reveals the relationship between the extension class, Deligne-Illusie class, and the Higgs field.      	
Before that, we recall the cup product of cohomology classes of coherent sheaves:
    For any two coherent sheaves $ \mathcal{F}$ and $\mathcal{G}$ over an ambient space $ X $, there exists a natural cup product in the cohomology groups:
\begin{equation}
H^{i}(X, \mathcal{F}) \times H^{j}(X, \mathcal{G}) \stackrel{\cup}{\rightarrow} H^{i + j}(X, \mathcal{F} \otimes \mathcal{G}).
\end{equation} 

\begin{theorem}\label{thm of ext class}
	The extension class of the inverse Cartier \eqref{ext of inverse Cartier} is the product of Frobenius of the Higgs field and the Deligne-Illusie class, \ie, we have
    \begin{equation} 
        \xi = F^{*}(\theta) \cup \kappa.
    \end{equation}
\end{theorem}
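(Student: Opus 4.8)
The plan is to compute both the extension class $\xi$ and the cup product $F^{*}(\theta)\cup\kappa$ as explicit \v{C}ech $1$-cocycles on a common cover $\{U_i\}$ equipped with the local Frobenius liftings $\tilde F_i$ used in the exponential twist, and then to observe that the two cocycles coincide term by term. Throughout I would work with parabolic bases so that all transition data and the Higgs field respect the parabolic structure; the identification $\mathrm{Ext}^{1}(F^{*}L_{1},F^{*}L_{2})\cong H^{1}(X_{1},\mathcal{H}om(F^{*}L_{1},F^{*}L_{2}))$ is then understood in the parabolic sense, and the contraction defining the cup product is the natural pairing $F^{*}\Omega^{1}_{X_{1}}(\log D_{1})\otimes F^{*}T_{X_{1}}(-\log D_{1})\to\mathcal{O}_{X_{1}}$.

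First I would unwind the exponential-twist description of $(H,\nabla)=C^{-1}(E,\theta)$. On each $U_i$ I choose parabolic bases adapted to the splitting $E\cong L_{1}\oplus L_{2}$, so that the transition data of $E$ is block diagonal, $\phi_{ij}=\mathrm{diag}(\phi_{ij}^{1},\phi_{ij}^{2})$, and the graded Higgs field is strictly lower triangular with single nonzero entry $F^{*}\theta_{21}$. Since $\theta$ is graded, $\theta^{2}=0$, so the gluing datum $\exp(h_{ij}F^{*}\theta)\circ F^{*}\phi_{ij}$ truncates to $(I+h_{ij}(F^{*}\theta))\,F^{*}\phi_{ij}$, with $h_{ij}$ the Deligne--Illusie representative of \eqref{def of DI class}. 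Expanding, the transition matrix of $H$ in the basis $\{F^{*}e_i^{1},F^{*}e_i^{2}\}$ is block lower triangular with diagonal blocks $F^{*}\phi_{ij}^{1}$ and $F^{*}\phi_{ij}^{2}$ and lower-left block $h_{ij}(F^{*}\theta_{21})\,F^{*}\phi_{ij}^{1}$. This preserves the span of the $F^{*}e_i^{2}$, recovering the subbundle $F^{*}L_{2}$ and the quotient $F^{*}L_{1}$ of the sequence \eqref{eq: ses of bd of flat connection}.

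Next I would read off the extension class. Taking the local splittings $s_i\colon F^{*}L_{1}|_{U_i}\to H|_{U_i}$ sending the class of $F^{*}e_i^{1}$ to the first basis vector, the standard recipe expresses $\xi$ as the class of $\{s_i-s_j\}$, valued in $\mathcal{H}om(F^{*}L_{1},F^{*}L_{2})$; a direct frame computation shows this cocycle equals the lower-left block twisted by the inverse of the quotient transition, namely $h_{ij}(F^{*}\theta_{21})\,F^{*}\phi_{ij}^{1}\cdot(F^{*}\phi_{ij}^{1})^{-1}=h_{ij}(F^{*}\theta_{21})$ up to sign. On the other side, $F^{*}\theta$ is a global section, hence a $0$-cocycle valued in $\mathcal{H}om(F^{*}L_{1},F^{*}L_{2})\otimes F^{*}\Omega^{1}_{X_{1}}(\log D_{1})$, while $\kappa=[h_{ij}]$ is a $1$-cocycle valued in $F^{*}T_{X_{1}}(-\log D_{1})$. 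The \v{C}ech cup product of a $0$-cochain with a $1$-cochain is $(F^{*}\theta\cup\kappa)_{ij}=(F^{*}\theta)|_{U_{ij}}\cdot h_{ij}$, and the contraction pairing turns this into $h_{ij}(F^{*}\theta_{21})$ in the $U_i$-frame. Matching the two cocycles term by term gives $\xi=F^{*}(\theta)\cup\kappa$.

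The main obstacle I anticipate is the bookkeeping in the penultimate step: identifying the off-diagonal block $h_{ij}(F^{*}\theta_{21})\,F^{*}\phi_{ij}^{1}$ with a genuine section of $\mathcal{H}om(F^{*}L_{1},F^{*}L_{2})$ over $U_{ij}$ requires tracking how the diagonal transition factors $F^{*}\phi_{ij}^{1},F^{*}\phi_{ij}^{2}$ enter the gluing of that $\mathcal{H}om$-sheaf, and checking that the quotient-transition factor cancels exactly, leaving the bare contraction $h_{ij}(F^{*}\theta)$ with no residual twist; one must also use that $F^{*}\theta$ is a $0$-cocycle, not merely a $0$-cochain, to remove any $i$-versus-$j$ ambiguity in the cup-product formula and to match signs with the chosen orientation of the extension. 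In parallel I would verify independence of the choices of cover, Frobenius liftings, and local splittings -- altering these modifies the transition cocycle and the class $\kappa$ by matching coboundaries -- and confirm that every map is a morphism of parabolic bundles, which is guaranteed by Proposition \ref{prop of par inverse Cartier}, since the exponential twist already yields a parabolic connection.
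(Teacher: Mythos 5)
Your proposal is correct and follows essentially the same route as the paper's proof: both read off the \v{C}ech cocycle of $\xi$ directly from the exponential-twist transition data $\exp(h_{ij}F^{*}\theta)\circ F^{*}\phi_{ij}$ (which truncates to $I+h_{ij}F^{*}\theta$ since the graded Higgs field is two-step nilpotent), obtaining $\xi_{ij}=F^{*}(\theta_{i})\,h_{ij}$, and then identify this with the \v{C}ech cup product of the $0$-cocycle $F^{*}\theta$ with the $1$-cocycle $\{h_{ij}\}$ representing $\kappa$. The paper asserts the formula for $\xi_{ij}$ tersely ``by the definition of the inverse Cartier,'' whereas you additionally carry out the frame computation showing the quotient-transition factor $F^{*}\phi^{1}_{ij}$ cancels -- a useful elaboration, but not a different method.
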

\begin{proof}
	It originates from the construction of the (parabolic) exponential twist.
        We pick an open cover $\{ U_{i} \}_{i\in I}$ of $X$ such that Frobenius lifting $\tilde{F}_{i}$ and parabolic bases exist on each $U_{i}$. 
        Let $\{\theta_{i}\}$ be the local expression of Higgs bundle, \ie, $\theta_{i} = \theta|_{U_{i}}: L_{1}|_{U_{i}} \to (L_{2} \otimes \Omega_{X_{1}}^{1}(\log D_{1}))|_{U_{i}}$, and let $\{ h_{ij}\}_{ij}$ be the C\v{e}ch representation of Deligne-Illusie class with respect to the cover $\{ U_{i} \}_{i\in I}$.
        By the definition of inverse Cartier, the C\v{e}ch representation of extension class $\xi=\{\xi_{ij}\}_{ij}$ is as follows: 
        \begin{equation}\label{ext class}
            \xi_{ij} = F^{*}(\theta_{i}) h_{ij}.
        \end{equation}
        Then by the definition of cup product, we have
        \begin{equation*}
            F^{*}\theta \cup \kappa = \{ F^{*}\theta_{i} \}_{i} \cup \{ h_{ij} \}_{ij} = \{ F^{*}\theta_{i} h_{ij}\}_{ij}.
        \end{equation*}
        combining \eqref{ext class} completes our proof. 
\end{proof}

Theorem \ref{thm of ext class} gives information about the underlying bundle $H$ of $C^{-1}(E,\theta)$. 
    We also offer an algorithm to determine the Harder-Narasimhan filtration of $H$.    
We restrict ourselves to the case where both $L_{1}$ and $L_{2}$ are line bundles.
The proposition \ref{thm of ext class} tells us the explicit formula of the extension class.
Next, we give a criterion for testing the maximal destabilizing subbundle of $H$.
    	
Let $ (H, \nabla) \cong C^{-1}(E, \theta) $, and let $ M $ be a sub-vector bundle of $H$.
    Then we apply $\mathrm{Hom}(M,*)$ to the \eqref{eq: ses of bd of flat connection}, we have the following exact sequence:
\begin{equation}\label{short exact sequence to algorithm}
    \begin{split}
0 \rightarrow \mathrm{Hom}(M,F^{*}L_{2}) \to \mathrm{Hom}(M,H) &\to \mathrm{Hom}(M,F^{*}L_{1}) \\
    &\stackrel{\delta}{\to} H^{1}(M,F^{*}L_{2}) \to \cdots. 
    \end{split}
\end{equation}

    \begin{proposition}\label{algorithm}
    $M$ is the maximal destabilizing sub-bundle of $H$ if and only if $\dim(\ker \delta)=1$.
    \end{proposition}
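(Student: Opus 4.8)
The plan is to analyze the long exact sequence \eqref{short exact sequence to algorithm} after pinning down the connecting map $\delta$ and reducing to a saturated line subbundle. First I would note that since $H$ has rank $2$ and $F^{*}L_{1},F^{*}L_{2}$ are line bundles, the maximal destabilizing subbundle (when $H$ is unstable) is necessarily a saturated line subbundle, and in rank $2$ it is the \emph{unique} saturated line subbundle $M\subset H$ with parabolic slope $\mu(M)>\mu(H)$; this uniqueness will be used repeatedly. I would then record the key identification: by Theorem \ref{thm of ext class} the extension class of \eqref{eq: ses of bd of flat connection} is $\xi=F^{*}\theta\cup\kappa$, and the connecting homomorphism $\delta$ in the $\mathrm{Ext}^{\bullet}(M,-)$ sequence is precisely composition (cup product) with $\xi$. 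Consequently $\ker\delta$ is exactly the space of homomorphisms $M\to F^{*}L_{1}$ that lift to $H$, and exactness of \eqref{short exact sequence to algorithm} gives the dimension formula $\dim\ker\delta=\dim\mathrm{Hom}(M,H)-\dim\mathrm{Hom}(M,F^{*}L_{2})$.

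For the forward implication I would assume $M$ is the maximal destabilizing subbundle and compute both terms. Since the semistability of $(E,\theta)$ (the standing assumption for the flow) forces $\deg F^{*}L_{1}\geq\deg F^{*}L_{2}$, one has $\mu(H)\geq\deg F^{*}L_{2}$, so a destabilizing $M$ satisfies $\deg M>\deg F^{*}L_{2}$ and hence $\mathrm{Hom}(M,F^{*}L_{2})=H^{0}(M^{\vee}\otimes F^{*}L_{2})=0$ by negativity of the degree. Writing $Q=H/M$, maximality gives $\deg Q<\deg M$, so $\mathrm{Hom}(M,Q)=0$ and therefore $\mathrm{Hom}(M,H)=\mathrm{Hom}(M,M)=k$. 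The dimension formula then yields $\dim\ker\delta=1-0=1$.

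For the converse I would argue by contrapositive, combining the dimension formula with the rank-$2$ uniqueness. The essential linear-algebra input is that for a line bundle $M$ and the rank-$2$ bundle $H$, the inequality $\dim\mathrm{Hom}(M,H)\geq 2$ forces $\mu(M)\leq\mu(H)$, since two independent maps $M\to H$ generically span $H$, which is incompatible with $M$ having slope above $\mu(H)$. Thus $\dim\ker\delta=1$ first forces $\dim\mathrm{Hom}(M,H)=1$, whence $\mathrm{Hom}(M,F^{*}L_{2})=0$ and the composite $M\to F^{*}L_{1}$ is nonzero, placing us in the nontrivial case $M\not\subseteq F^{*}L_{2}$; then the slope bookkeeping shows $M$ is saturated with $\mu(M)>\mu(H)$, and by uniqueness $M$ is the maximal destabilizing subbundle. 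I would spell out that any strictly larger liftable line subbundle would restrict to a homomorphism $M\to F^{*}L_{1}$ independent of $\pi\circ\iota$, contributing a second dimension to $\ker\delta$ and contradicting $\dim\ker\delta=1$.

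The main obstacle I anticipate is the converse, specifically the degree/slope bookkeeping needed to exclude the possibility that $\dim\mathrm{Hom}(M,H)$ and $\dim\mathrm{Hom}(M,F^{*}L_{2})$ are both positive yet differ by exactly $1$ for a non-destabilizing $M$; handling this requires carefully using $\deg F^{*}L_{1}\geq\deg F^{*}L_{2}$ together with Riemann--Roch on the base curve, and treating the borderline case $\mu(M)=\mu(H)$ separately. A secondary point to verify is that the identification of $\delta$ with cup product by $\xi$ is compatible with the parabolic structure, so that the parabolic degrees (rather than ordinary degrees) govern all the vanishing statements above.
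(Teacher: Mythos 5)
Your proposal follows the same skeleton as the paper's (very terse) proof: exactness of \eqref{short exact sequence to algorithm} gives $\dim\ker\delta=\dim\mathrm{Hom}(M,H)-\dim\mathrm{Hom}(M,F^{*}L_{2})$, and everything reduces to the two assertions the paper states without justification, namely $\mathrm{Hom}(M,F^{*}L_{2})=0$ and ``$M$ is maximal destabilizing iff $\dim\mathrm{Hom}(M,H)=1$'' (i.e.\ \eqref{m d s}). Your forward direction is correct and actually supplies the details the paper omits: Higgs semistability of $(E,\theta)$ forces $\deg F^{*}L_{2}\leq\mu(H)<\deg M$, killing $\mathrm{Hom}(M,F^{*}L_{2})$, and the quotient argument gives $\mathrm{Hom}(M,H)=\mathrm{Hom}(M,M)=k$, whence $\dim\ker\delta=1$.

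The converse, however, contains a genuine gap. First, your ``essential linear-algebra input'' is false as stated: on $\mathbb{P}^{1}$ take $H=\mathcal{O}(5)\oplus\mathcal{O}(-5)$ and $M=\mathcal{O}(3)$; then $\dim\mathrm{Hom}(M,H)=h^{0}(\mathcal{O}(2))=3$ while $\mu(M)=3>0=\mu(H)$. Two independent maps $M\to H$ need not generically span $H$ --- here every map factors through $\mathcal{O}(5)$. The claim does hold for \emph{saturated} line subbundles, but the proof of that is the uniqueness-of-the-destabilizing-subbundle argument (i.e.\ re-running your forward direction), not the spanning argument you give. Second, and fatally, the inference ``$\dim\ker\delta=1$ first forces $\dim\mathrm{Hom}(M,H)=1$'' is circular: by your own dimension formula it is equivalent to $\mathrm{Hom}(M,F^{*}L_{2})=0$, which you proved only under the hypothesis that $M$ is destabilizing --- exactly what the converse must establish. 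You flag this yourself as the ``main obstacle'' and never close it, so the converse remains unproven. Nor could it be closed without an extra hypothesis on $M$: take $L_{1}=L_{2}$ and $\theta=0$ (allowed by \eqref{def of gaded Higgs bd} and semistable); then $\xi=F^{*}\theta\cup\kappa=0$, so $H\cong F^{*}L_{1}\oplus F^{*}L_{2}$ is semistable and no destabilizing subbundle exists, yet the direct factor $M=F^{*}L_{1}$ has $\dim\ker\delta=\dim\mathrm{Hom}(F^{*}L_{1},F^{*}L_{1})=1$. (To be fair, the paper's own proof shares this defect: it asserts $\mathrm{Hom}(M,F^{*}L_{2})=0$ with no hypothesis on $M$, which fails e.g.\ for $M=F^{*}L_{2}\subset H$.) A correct converse must assume, and use, that the (parabolic) degree of $M$ exceeds that of $F^{*}L_{2}$ and that $\deg L_{1}>\deg L_{2}$, which is what the application in the appendix implicitly provides.
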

    \begin{proof}    	
Note that $M$ is the maximal destabilizing sub-bundle of $H$ if and only if 
    \begin{equation}\label{m d s}
    \dim(\mathrm{Hom}(M, H)) = 1.
    \end{equation}
    Moreover we have $\mathrm{Hom}(M,F^{*}L_{2})=0$, hence $\mathrm{Hom}(M,H) \cong \ker(\delta)$.
    The proof concludes by integrating \eqref{m d s}.
    \end{proof}
    
    If \(L_1\) and \(L_2\) are endowed with trivial parabolic structures, \(L_1 \otimes L_2 \cong \mathcal{O}_{X_1}\), the morphism \(\theta: L_1 \to L_2 \otimes \Omega_{X_1}^1(\log D_1)\) is an isomorphism, and the maximal destabilizing subbundle \(M\) is isomorphic to \(L_1\), then the triple \((H, \nabla, M)\) forms an \textit{oper}. 
    The explicit nature of the inverse Cartier transformation facilitates the accurate calculation of $\delta$ through the representation in C\v{e}ch cohomology.
    This proposition extends the methodology outlined in \cite[Appendix A]{SYZ22Projecrysrepoffmtgpandtwistedflow}. 
    An illustrative example detailing the computation process is provided in the appendix.
```

\section{Discussion}\label{Discussion}
The primary aim of this paper is to introduce the concept of parabolic bases and employ it to establish a parabolic non-abelian Hodge correspondence in positive characteristic.
This framework offers a comprehensive characterization of parabolic vector bundles and, via explicit computations, facilitates the derivation of key operations and results for vector bundles and $\lambda$-connections.
Given the local nature of the non-abelian Hodge correspondence in positive characteristic, the parabolic counterpart emerges naturally from this construction. We propose the following directions for future research:
\begin{enumerate}
\item Investigate whether parabolic bases can reconstruct Simpson’s non-abelian Hodge correspondence for quasi-projective complex curves \cite{Sim90Harbdonnoncomcur} and generalize it to higher-dimensional settings, as in Mochizuki’s work \cite{Mochi06KobaHitchcorrefortameharbdandapp,Mochi11Wildharbdandwildpuretwis}.
In particular, explore connections between parabolic bases and filtered local systems.
\item Extend the framework of parabolic bases to encompass torsion-free sheaves and coherent sheaves, broadening its applicability beyond vector bundles.
\item Leverage parabolic bases to study the geometry of root stacks—for instance, in contexts such as K-theory, operator theory, and related structures—given the correspondence between parabolic bundles and bundles over root stacks.
\item Having established the parabolic non-abelian Hodge correspondence in characteristic  $p$, a natural next step is to extend these results to the $p$-adic setting, thereby developing a comprehensive parabolic $p$-adic Hodge correspondence framework.
\end{enumerate}

%%%%%%%%%%%%%%%%%%%%%%%%%%%%%%%%%%%%%%%%%%%%%%%%%%%%%%%%%%%%%%%%%%%%%%

%%%%%%%%%%%%%%%%%%%%%%%%%%%%%%%%%%%%%%%%%%%%%%%%%%%%%%%
%%% Acknowledgements. ÖÂÐ»
%%%%%%%%%%%%%%%%%%%%%%%%%%%%%%%%%%%%%%%%%%%%%%%%%%%%%%%
\textbf{Acknowledgements:} The author would like to express gratitude to Jianping Wang, from whom the author learned the theory of parabolic bases, and to Zhaofeng Yu, as this paper originated from discussions with him. 
This work forms part of the author's thesis, and the author is deeply thankful to their advisor, Mao Sheng, for his guidance and support.  

The author also extends sincere thanks to the referee for their careful reading of the manuscript and for providing invaluable comments that greatly improved the paper.
 	
        \appendix	
        \section{On Li-Sheng conjecture}        
            As mentioned in the Introduction, the Gauss-Manin connection gives rise to a periodic Higgs-de Rham flow in a natural way. 
        In their work \cite{LiSheng22CharofBeanumber}, Li and Sheng introduced the concept of a periodic Higgs bundle in characteristic $0$. 
        This idea was further expanded to the parabolic context by Krishnammoorthy and Sheng in \cite{KrisSheng20perideRhamovercur}. 
        They put forward a conjecture suggesting that the periodic Higgs bundle is motivic. While the conjecture remains a long way from being proven, it provides significant insights into specific case analyses. 
        
        A primary example pertains to Beauville's elliptic fibration over projective line minus four points.
        We give a concrete exposition.
        Let $\lambda$ be a complex number, $(E,\theta)_{\lambda}$ be a logarithmic Higgs bundle on $(\mathbb{P}^{1}_{\mathbb{C}},0+1+\lambda+\infty)$ as follows:
        \begin{align*}
            &E \cong \mathcal{O}(1) \oplus \mathcal{O}(-1) ;\\
            &\theta : \mathcal{O}(1) \stackrel{\cong}{\to} \mathcal{O}(-1) \otimes \Omega_{\mathbb{P}^{1}}(0+1+\lambda + \infty),
        \end{align*}
        where $\mathcal{O}(d)$ denotes the unique bundle of degree $d$ on $\mathbb{P}^{1}_{\mathbb{C}}$.
        In \cite{LiSheng22CharofBeanumber}, the following question was posed: For which values of $\lambda$ can $(E, \theta)_{\lambda}$ be realized as the Kodaira-Spencer system of a semistable elliptic fibration $f: \mathcal{Y} \to \mathbb{P}^{1} - \{0, 1, \infty, \lambda\}$?
        Beauville proved there are only six families of such elliptic fibration, all of these turn out to be modular curves of genus zero.
        In Li and Sheng \cite{LiSheng22CharofBeanumber}, they aim to study these family in a Higgs-de Rham flow theoretic point of view.
        Their study is based on the following observation: If $(E,\theta)$ underlies a Kodaira-Spencer system of a semistable family, then it must be a periodic Higgs bundle for the reduction for almost all places of spreading out $(\mathbb{P}^{1}_{A},0+1+\lambda+\infty)$ of $(\mathbb{P}^{1}_{\mathbb{C}},0+1+\lambda+\infty)$, where $A$ is a finitely generated $\mathbb{Z}$-algebra.
        See \cite[Theorem 2.9]{LiSheng22CharofBeanumber} for a precise statement.
        Hence, the problem has been reduced to the study of the periodicity of $(E,\theta)$.
        It depends not only on the modulo of place $\mathfrak{p}$, but also on $A/\mathfrak{p}^{2} \cong W_{2}(A/\mathfrak{p})$, since the inverse Cartier depends on $W_{2}(k)$ lifting.
        We write the image of $\lambda$ in $A/\mathfrak{p}^{2}$ as the Witt vector $\lambda_{\mathfrak{p}^{2}} = (\lambda_{0},\lambda_{1})$.
        
        They proved the periodic condition if and only if $\lambda_{0}$ and $\lambda_{1}$ satisfy an algebraic relation \cite[Proposition 3.4]{LiSheng22CharofBeanumber}, which implies that $\lambda$ is an algebraic number.
        The appendix is devoted to simplifying this algebraic relation using Proposition \ref{m d s}.
        Let $(H,\nabla) \cong C^{-1}_{\lambda}(E,\theta)$.
	Assuming $ H \cong \mathcal{O}(a) \oplus \mathcal{O}(-a) $ with $ a \geq 0 $, since $ (H, \nabla) $ is gr-semistable, we get the inequality: $ a \leq -a + 2 $.
	Hence $ a = 1$ or $ a = 0 $.
 
	\textbf{Claim:} $ (E, \theta) $ is periodic if and only if $ a = 1 $.
	In fact, if $ a = 1 $ then $ \mathrm{Gr}(H, \nabla) \cong (\mathcal{O}(1) \oplus \mathcal{O}(-1), \theta') $, where $ \theta': \mathcal{O}(1) \rightarrow \mathcal{O}(-1) \otimes \Omega_{\Pl}^{1}(D) $.
	The stability condition forces $ \theta $ to be nonzero and thus $ (E,\theta) $ is one periodic.
	If $ a = 0 $,  then $ \mathrm{Gr}(H,\nabla) $ is not unique, but each of them are $ S $ equivalent to $ (\mathcal{O} \oplus \mathcal{O}, 0) $ then $ C^{-1}(\mathcal{O} \oplus \mathcal{O}, 0) \cong (\mathcal{O} \oplus \mathcal{O}, \dif) $ impossible to be periodic.
	Then we are in an amount to determine the case $a = 1$.
	We have the canonical short exact sequence 
	\begin{equation}
	0 \rightarrow (\mathcal{O}(-p), \nabla_{\mathrm{can}}) \rightarrow (H,\nabla) \rightarrow (\mathcal{O}(p), \nabla_{\mathrm{can}}) \rightarrow 0.
	\end{equation}            
	By forgetting the connection, we get the short exact sequence of the vector bundle.
	Taking the left exact functor $ \mathrm{Hom}(\mathcal{O}(1), *) $, we have the boundary map:
	\begin{equation}
	\mathrm{Hom}^{0}(\mathcal{O}(1),\mathcal{O}(p)) \stackrel{\Delta}{\rightarrow} \mathrm{R}^{1}\mathrm{Hom}(\mathcal{O}(1),\mathcal{O}(-p)) .
	\end{equation} 
	By Proposition \ref{algorithm}, $ H \cong \mathcal{O}(1) \oplus \mathcal{O}(-1) $ if and only if $ \Delta $ degenerates.
	In the sequel of this section, we aim to explicitly compute the map $ \Delta $.
	We first have to choose a bases for the vector space $H^{0}(\mathcal{O}(p-1))$ and $H^{1}(\mathcal{O}(p+1))$.
	Take an open cover $U_{1} = \mathbb{P}^{1}_{k}-\left\lbrace 0,\infty \right\rbrace $ and $U_{2}=\mathbb{P}^{1}_{k}-\left\lbrace1,\lambda \right\rbrace $ of $\mathbb{P}^{1}_{k}$.
	
	Let  
        \[
        R_{1} = \mathcal{O}(U_{1}) = k\left[x, \frac{1}{x}\right], \quad R_{2} = \mathcal{O}(U_{2}) = k\left[\frac{x-1}{x-\lambda}, \frac{x-\lambda}{x-1}\right],
        \]  
        and consider a representation of \(\mathcal{O}(k)\) such that \(\mathcal{O}(k)(U_{1}) = R_{1}\) and \(\mathcal{O}(k)(U_{2}) = \left(\frac{x-1}{x}\right)^{k} R_{2}\).  
        We then take the Čech representation of \(H^{i}(\mathcal{O}(k))\):  
        \[
        0 \rightarrow \prod_{i=1}^{2} \Gamma(U_{i}) \xrightarrow{d_{1}} \Gamma(U_{12}) \xrightarrow{d_{2}} 0,
        \]  
        which yields  
        \[
        H^{0}(\mathcal{O}(\ell)) = R_{1} \cap \left(\frac{x-1}{x}\right)^{\ell} R_{2}, \quad H^{1}(\mathcal{O}(-\ell)) = \frac{R_{12}}{R_{1} + \left(\frac{x}{x-1}\right)^{\ell} R_{2}}.
        \]  
        To make this explicit, we choose bases for the two vector spaces:  
        \[
        H^{0}(\mathcal{O}(\ell)) = k\left\{1, x, \dots, x^{\ell}\right\}, \quad H^{\ell}(\mathcal{O}(-k)) = k\left\{\left(\frac{x}{x-1}\right)^{\ell}, \dots, \left(\frac{x}{x-1}\right)^{\ell-1}\right\}.
        \]  
        Using these bases, we can now compute the corresponding matrix.              
        Let \(\tilde{F}_{i}\) be the lifting of Frobenius over \(U_{i}\) that preserves the divisor \(U_{i} \cap D\), i.e., \(\tilde{F}^{*}(\mathcal{O}(-V)) = \mathcal{O}(-p \, U_{i} \cap D)\). Explicitly, we have  
        \[
        \tilde{F}_{1}(x) = \frac{(x - \lambda_{\mathfrak{p}^{2}})^{p} - F(\lambda_{\mathfrak{p}^{2}})(x-1)^{p}}{(x - \lambda_{\mathfrak{p}^{2}})^{p} - (x-1)^{p}}, \quad \tilde{F}_{2}(x) = x^{p}.
        \]
	
	In \( U_{ij} = U_{i} \cap U_{j} \), the C\v{e}ch representation of the Deligne-Illusie class is given by  
        \begin{equation}\label{explicit Deligne-Illusie class}
        h_{12}(F^{*}\dif x) = \frac{\tilde{F}_{1}(x) - \tilde{F}_{2}(x)}{p},
        \end{equation}  
        and hence  
        \begin{equation*}
        h_{12}(F^{*}\dif x) = \frac{f_{\lambda_{0}}(x)(x^{p} - 1) - f_{1}(x)(x^{p} - \lambda_{0}^{p})}{\lambda_{0}^{p} - 1},
        \end{equation*}  
        where  
        \begin{equation*}
        f_{\lambda_{0}}(x) = \sum_{i=1}^{p-1} \frac{\lambda_{0}^{p-i}}{i} x^{i} + \lambda_{1}, \quad f_{1}(x) = \sum_{i=1}^{p-1} \frac{x^{i}}{i}.
        \end{equation*}  
        
        The matrix of \(\delta\), which is of size \(p \times p\), expressed in terms of the chosen bases, is  
        \begin{equation*}
        \Delta = \begin{Bmatrix}
        \lambda_{1} & \frac{-\lambda_{0}^{p} + \lambda_{0}^{p-1}}{p-1} & \frac{-\lambda_{0}^{p} + \lambda_{0}^{p-2}}{p-2} & \cdots & \frac{\lambda_{0}^{p} - \lambda_{0}}{1} \\
        \frac{\lambda_{0}^{p} - \lambda_{0}^{p+1}}{1} & \lambda_{1} & \frac{\lambda_{0}^{p} - \lambda_{0}^{p-1}}{p-1} & \cdots & \frac{\lambda_{0}^{p} - \lambda_{0}^{2}}{2} \\
        \vdots & & \vdots & & \vdots \\
        \frac{\lambda_{0}^{p} - \lambda_{0}^{2p-2}}{p-2} & \frac{\lambda_{0}^{p} - \lambda_{0}^{p+1}}{1} & \cdots & \lambda_{1} & \frac{-\lambda_{0}^{p} + \lambda_{0}^{p-1}}{p-1} \\
        \frac{\lambda_{0}^{p} - \lambda_{0}^{2p-1}}{p-1} & \frac{\lambda_{0}^{p} - \lambda_{0}^{2p-2}}{p-2} & \cdots & \frac{\lambda_{0}^{p} - \lambda_{0}^{p+1}}{1} & \lambda_{1}
        \end{Bmatrix}.
        \end{equation*}  
        From the explicit formula, it is evident that \(\det \Delta\) is a polynomial of degree \(p\) in \(\lambda_{1}\).  
        
        \begin{theorem}
        \((E, \theta)\) is periodic if and only if \(\det(\lambda_{0}, \lambda_{1}) = 0\).
        \end{theorem}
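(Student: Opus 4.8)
The plan is to reduce the periodicity of $(E,\theta)$ to the degeneration of the boundary map $\Delta$, and then to read off that degeneration from the explicit matrix presented above. By the \textbf{Claim} already established, $(E,\theta)$ is periodic if and only if $a=1$, i.e. if and only if $H\cong\mathcal{O}(1)\oplus\mathcal{O}(-1)$ rather than $\mathcal{O}\oplus\mathcal{O}$. By Proposition~\ref{algorithm} applied with $M=\mathcal{O}(1)$, this happens precisely when the maximal destabilizing subbundle of $H$ is $\mathcal{O}(1)$, which (tracing through the long exact sequence \eqref{short exact sequence to algorithm}) is equivalent to the boundary map $\Delta\colon H^{0}(\mathcal{O}(p-1))\to H^{1}(\mathcal{O}(p+1))$ being degenerate. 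Since both source and target are $p$-dimensional, degeneration of $\Delta$ is equivalent to $\det\Delta=0$.

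First I would make the identification $\det(\lambda_{0},\lambda_{1}):=\det\Delta$ precise, so that the stated criterion $\det(\lambda_{0},\lambda_{1})=0$ is literally the vanishing of the determinant of the displayed $p\times p$ matrix. The entries of this matrix were computed from the explicit \v{C}ech representative \eqref{explicit Deligne-Illusie class} of the Deligne--Illusie class together with Theorem~\ref{thm of ext class}, which gives $\xi=F^{*}(\theta)\cup\kappa$ as the extension class governing the sequence \eqref{eq: ses of bd of flat connection}. The map $\Delta$ is exactly the connecting homomorphism $\delta$ of Proposition~\ref{algorithm} paired against the cup product with $\xi$, so its matrix in the chosen bases $\{1,x,\dots,x^{p-1}\}$ and $\{(x/(x-1)),\dots,(x/(x-1))^{p-1}\}$ is obtained by expanding $F^{*}\theta_{i}\,h_{ij}$ against those bases and extracting \v{C}ech cohomology classes.

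The remaining step is then almost tautological: combining the equivalences above yields
\begin{equation*}
(E,\theta)\ \text{periodic}\ \Longleftrightarrow\ a=1\ \Longleftrightarrow\ \Delta\ \text{degenerate}\ \Longleftrightarrow\ \det\Delta=0,
\end{equation*}
which is the assertion of the theorem once we set $\det(\lambda_{0},\lambda_{1}):=\det\Delta$. The only genuine content is to verify the entries of $\Delta$ are as displayed, and that was carried out in the preceding explicit computation using $f_{\lambda_{0}}$ and $f_{1}$; I would simply invoke that computation.

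I expect the main obstacle to be bookkeeping rather than conceptual. The delicate point is confirming that the algorithmic criterion of Proposition~\ref{algorithm}, which is phrased for the general rank $2$ setting, applies verbatim here with $M=\mathcal{O}(1)$: one must check that $\mathrm{Hom}(\mathcal{O}(1),F^{*}L_{2})=\mathrm{Hom}(\mathcal{O}(1),\mathcal{O}(-p))=0$ (which holds since $1>-p$ on $\mathbb{P}^{1}$), so that $\mathrm{Hom}(\mathcal{O}(1),H)\cong\ker\delta$ and $\dim\ker\delta=1$ is the right destabilizing condition. One must also confirm that the two candidate values $a=0,1$ forced by gr-semistability are genuinely distinguished by the rank of $\Delta$, i.e. that $a=0$ corresponds exactly to $\Delta$ of full rank $p$. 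Granting the explicit matrix, this is immediate, and the degree-$p$-in-$\lambda_{1}$ structure of $\det\Delta$ recovers and simplifies the algebraic relation of \cite[Proposition~3.4]{LiSheng22CharofBeanumber}.
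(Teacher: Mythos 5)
Your proposal is correct and follows essentially the same route as the paper: the paper's own ``proof'' of this theorem is precisely the chain you reconstruct --- the \textbf{Claim} reducing periodicity to $a=1$, Proposition~\ref{algorithm} with $M=\mathcal{O}(1)$ (using $\mathrm{Hom}(\mathcal{O}(1),\mathcal{O}(-p))=0$ and the fact that only $a\in\{0,1\}$ can occur, so $\ker\Delta\neq 0$ forces $\dim\ker\Delta=1$), and the identification $\det(\lambda_{0},\lambda_{1})=\det\Delta$ via the explicit \v{C}ech computation of $\xi=F^{*}\theta\cup\kappa$. The only blemish is notational: the target of $\Delta$ should be $H^{1}(\mathcal{O}(-p-1))\cong\mathrm{Ext}^{1}(\mathcal{O}(1),\mathcal{O}(-p))$ rather than $H^{1}(\mathcal{O}(p+1))$, a slip the paper itself also makes.
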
  
        
        By a straightforward analysis of the degree of \(\det(\lambda_{0}, \lambda_{1})\), we obtain the following corollary:  
        
        \begin{corollary}
        For any \(\lambda \in \overline{F}_{P}\), there are exactly \(p\) liftings of \(\lambda\) such that \((E, \theta)\) is periodic.
        \end{corollary}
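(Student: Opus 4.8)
The plan is to read the count off directly from the preceding theorem together with the explicit shape of the matrix $\Delta$. Fix $\lambda_{0} \in \overline{F}_{P}$. A lifting of $\lambda_{0}$ to $W_{2}(\overline{F}_{P})$ is precisely a Witt vector $(\lambda_{0},\lambda_{1})$, so such liftings are parametrised by the single coordinate $\lambda_{1} \in \overline{F}_{P}$. By the theorem above, the Higgs bundle $(E,\theta)$ is periodic for the lift $(\lambda_{0},\lambda_{1})$ if and only if $\det(\lambda_{0},\lambda_{1}) = 0$. Thus the assertion is equivalent to the claim that the one-variable polynomial $\lambda_{1} \mapsto \det(\lambda_{0},\lambda_{1})$ has exactly $p$ zeros in $\overline{F}_{P}$, and the whole argument reduces to understanding this polynomial.

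First I would pin down its degree. Inspecting the displayed matrix, every diagonal entry equals $\lambda_{1}$, while every off-diagonal entry is a function of $\lambda_{0}$ alone. Hence one may write
\[
\Delta = \lambda_{1}\,\id + B(\lambda_{0}),
\]
where $B(\lambda_{0})$ is the $p \times p$ matrix with vanishing diagonal recording the off-diagonal data. Consequently $\det(\lambda_{0},\lambda_{1}) = \det(\lambda_{1}\,\id + B(\lambda_{0}))$ is exactly the characteristic polynomial of $-B(\lambda_{0})$ evaluated at $\lambda_{1}$. Expanding the determinant over permutations, the coefficient of $\lambda_{1}^{p}$ arises only from the identity permutation and equals $1$, so the polynomial is monic of degree exactly $p$ in $\lambda_{1}$; the coefficient of $\lambda_{1}^{p-1}$ is $\tr B(\lambda_{0}) = 0$, which is the content of the ``straightforward analysis of the degree'' alluded to above.

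With the degree fixed at $p$, I would conclude by invoking that $\overline{F}_{P}$ is algebraically closed: a monic polynomial of degree $p$ has exactly $p$ roots there, counted with multiplicity, yielding the stated count of liftings. The one point that genuinely requires care — and which I expect to be the main obstacle — is \emph{separability}: to read ``exactly $p$'' as $p$ \emph{distinct} liftings one must exclude repeated roots, that is, show the characteristic polynomial of $B(\lambda_{0})$ has no multiple factor. I would address this either by computing the discriminant of $\det(\lambda_{0},\lambda_{1})$ in $\lambda_{1}$ from the explicit off-diagonal entries and verifying it is not identically zero, or by exhibiting $B(\lambda_{0})$ as a regular (cyclic) matrix so that its characteristic and minimal polynomials coincide; the near-Toeplitz structure built from the coefficients of $f_{\lambda_{0}}$ and $f_{1}$ is what I would exploit here. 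If separability were to fail on a proper closed locus of $\lambda_{0}$-values, the degree computation still forces exactly $p$ roots counted with multiplicity, so the count of $p$ liftings persists for every $\lambda_{0}$.
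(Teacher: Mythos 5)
Your proposal is correct and follows essentially the same route as the paper: the paper likewise reads off from the explicit matrix that $\det \Delta$ is a polynomial of degree $p$ in $\lambda_{1}$ (all diagonal entries being $\lambda_{1}$, all off-diagonal entries depending only on $\lambda_{0}$) and obtains the count of liftings from the algebraic closedness of $\overline{F}_{P}$. The separability issue you flag is genuine but is not addressed in the paper either --- its ``exactly $p$'' is implicitly a count with multiplicity, which your monic-degree-$p$ argument already delivers.
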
  
        
        We now simplify the Li-Sheng conjecture (\cite[See Conjecture 1.5]{LiSheng22CharofBeanumber}) into the following statement:  
        
        \begin{conjecture}[Li-Sheng]
        \(\lambda\) is a Beauville number if and only if  
        \begin{equation*}
        \det M_{p}(\lambda_{0}, \lambda_{1}) = 0
        \end{equation*}  
        for almost all places \(\mathfrak{p}\).
        \end{conjecture}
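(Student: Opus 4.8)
The plan is to factor the conjecture through the two ingredients already assembled: the determinant criterion proved immediately above, which asserts that $(E,\theta)$ is periodic at a place $\mathfrak{p}$ with Witt lift $(\lambda_{0},\lambda_{1})$ precisely when $\det M_{p}(\lambda_{0},\lambda_{1})=0$, together with the arithmetic characterization of Li and Sheng. Applying the determinant criterion place by place converts the right-hand condition ``$\det M_{p}(\lambda_{0},\lambda_{1})=0$ for almost all $\mathfrak{p}$'' into the purely flow-theoretic condition that $(E,\theta)$ is periodic at almost all places of a spreading out $(\Pl_{A},0+1+\lambda+\infty)$. Thus the conjecture becomes the clean statement that $\lambda$ is a Beauville number if and only if its Kodaira-Spencer system is periodic at almost all reductions. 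First I would record this reduction, being careful that the finitely many excluded places (those dividing the denominators of $\lambda$, the places of bad reduction, and the places where $p$ divides a ramification index) are harmless under the ``almost all'' quantifier.

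The forward implication is essentially due to Li and Sheng, and I would simply assemble it. If $\lambda$ is a Beauville number, then by Beauville's classification $(E,\theta)_{\lambda}$ arises as the Kodaira-Spencer system of one of the six semistable elliptic fibrations over $\Pl-\{0,1,\infty,\lambda\}$; the Li-Sheng theorem on the arithmetic of the Gau\ss{}-Manin system (\cite[Theorem 2.9]{LiSheng22CharofBeanumber}) then shows that such a geometric Higgs bundle reduces to a periodic Higgs bundle at almost all places of the spreading out. By the determinant criterion this is exactly $\det M_{p}(\lambda_{0},\lambda_{1})=0$ for almost all $\mathfrak{p}$. The only work here is bookkeeping: matching the $W_{2}$-lift used by the inverse Cartier transformation with the reduction datum $A/\mathfrak{p}^{2}\cong W_{2}(A/\mathfrak{p})$ recording $(\lambda_{0},\lambda_{1})$, which the explicit Frobenius lifts $\tilde{F}_{i}$ render transparent.

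The reverse implication is the genuinely hard direction, and is where the statement remains conjectural. Here one starts from periodicity at almost all places and must produce an actual semistable elliptic fibration, that is, prove that the periodic Higgs bundle is \emph{motivic}. The strategy I would attempt is the standard one in this circle of ideas: use the periodic Higgs-de Rham flow to package the reductions into a compatible system of crystalline Fontaine-Faltings representations of $\pi_{1}(\Pl-\{0,1,\infty,\lambda\})$, pass to the associated $\ell$-adic companions, and exploit the rigidity of rank $2$ local systems on the four-punctured line, together with the explicit quasi-unipotent local monodromy, to pin the companion system down. Beauville's finiteness (only six families) would then leave no possibility except a modular elliptic fibration, forcing $\lambda$ to be Beauville. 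The main obstacle is precisely this motivicity step: nothing in the determinant computation or the flow machinery yet guarantees that an abstractly periodic, hence crystalline, local system is realized by a family of varieties rather than merely carrying the correct Hodge-Frobenius formalism. Consequently I expect that the determinant criterion reduces the Li-Sheng conjecture to a sharp, checkable algebraic condition on $(\lambda_{0},\lambda_{1})$ and thereby isolates the motivicity obstruction, but does not by itself remove it; a complete proof would require either an explicit reconstruction of the fibration from the periodic flow, or a companions/automorphy input identifying the local system geometrically.
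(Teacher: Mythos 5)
The statement is a conjecture, and the paper offers no proof of it: it is presented as a restatement of Li-Sheng's Conjecture 1.5, justified precisely by the theorem proved just above that \((E,\theta)\) is periodic if and only if \(\det(\lambda_{0},\lambda_{1})=0\). Your proposal performs exactly this same reduction --- using the determinant criterion to translate the condition into periodicity at almost all places, deriving the forward direction from \cite[Theorem 2.9]{LiSheng22CharofBeanumber} together with Beauville's classification, and correctly identifying the reverse (motivicity) direction as the genuinely open content --- so it matches the paper's treatment and honestly isolates what remains conjectural rather than claiming a complete proof.
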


\end{document}